\def\NN{\mathbb{N}}
\DeclareMathOperator{\prob}{\mathbb{P}}
\DeclareMathOperator{\expec}{\mathbb{E}}
\newcommand{\whp}{whp}
\newcommand{\Whp}{Whp}
\newcommand{\cG}{\mathcal{G}}
\newcommand{\Po}{\mathrm{Po}}
\newcommand{\cF}{\mathcal{F}}
\newcommand{\cV}{\mathcal{V}}
\newcommand{\cT}{\mathcal{T}}
\newcommand{\cW}{\mathcal{W}}
\newcommand{\cH}{H^r(n,p)}
\newcommand{\EE}{\mathbb{E}}
\newcommand{\eps}{\varepsilon}
\newcommand{\eone}{E_{w}(\ell)}
\newcommand{\Bi}{\mathrm{Bi}}
\newcommand{\Tstop}{T_{\mathrm{stop}}}
\newcommand{\coredetector}{\textnormal{\texttt{CoreConstruct}}}
\newcommand{\coredetectortwo}{\coredetector }
\renewcommand{\Pr}{\mathbb{P}}
\newcommand{\varprop}[1]{\xi_{#1}}
\newcommand{\varpropl}[2]{\xi_{#1}^{(#2)}}
\newcommand{\factprop}[1]{\hat \xi_{#1}}
\newcommand{\factpropl}[2]{\hat \xi_{#1}^{(#2)}}
\newcommand{\varpropp}[1]{\xi_{#1}'}
\theoremstyle{theorem}
\newtheorem{theorem}{\textbf{Theorem}}[section]
\newtheorem{corollary}[theorem]{\textbf{Corollary}}
\newtheorem{lemma}[theorem]{\textbf{Lemma}}
\newtheorem{claim}[theorem]{\textbf{Claim}}
\newtheorem{question-formatless}{\textbf{Question}}
\newtheorem{proposition}[theorem]{\textbf{Proposition}}
\theoremstyle{definition}
\newtheorem{definition}[theorem]{\textbf{Definition}}
\newtheorem{remark}[theorem]{\textbf{Remark}}
\theoremstyle{fact}
\newtheorem{fact}[theorem]{\textbf{Fact}}
\newcommand{\intvarsurvlim}{\rho_*}
\newcommand{\invintvarsurvlim}{\tau_*}
\newcommand{\intfactsurvlim}{\hat \rho_*}
\newcommand{\intvarsurv}[1]{\rho_{#1}}
\newcommand{\intfactsurv}[1]{\hat \rho_{#1}}
\newcommand{\error}{\eps}
\newcommand{\oldeta}{\beta}
\newcommand{\oldbeta}{\eta}
\newcommand{\propnumvs}{\alpha}
\newcommand{\numvs}[1]{v(#1)}
\newcommand{\numvsdeg}[2]{v_{#1}(#2)}
\newcommand{\numedg}[1]{e(#1)}
\newcommand{\randfact}{G^r(n,p)}
\newcommand{\pc}{P_G}
\newcommand{\pcprime}{P^{'}_G}
\newcommand{\paddedcore}{padded core}
\newcommand{\Paddedcore}{Padded core}
\newcommand{\largedegsquare}{\widetilde{\mathcal{D}}}
\newcommand{\gooddegs}{\mathcal{D}}
\newcommand{\coreh}{C_H}
\newcommand{\coreg}{C_G}
\newcommand{\corer}{R_G}
\newcommand{\corerzero}{R_{G_0}}
\newcommand{\barG}{\overline{G}}
\newcommand{\td}{\tilde d}
\newcommand{\hd}{d^*}
\newcommand{\hD}{D^*}
\newcommand\numeq[1]
\newcommand\numgeq[1]
  \newcommand\numleq[1]
\begin{document}

\title[Loose cores and cycles in random hypergraphs]{Loose cores and cycles in random hypergraphs}

\author[O. Cooley, M. Kang, J. Zalla]{Oliver Cooley$^{*}$,
Mihyun Kang$^{*}$,
Julian Zalla$^{*}$}

\renewcommand{\thefootnote}{\fnsymbol{footnote}}

\footnotetext[1]{Supported by Austrian Science Fund (FWF): I3747, W1230, \texttt{\{cooley,kang,zalla\}@math.tugraz.at}, Institute of Discrete Mathematics, Graz University of Technology, Steyrergasse 30, 8010 Graz, Austria.}

\renewcommand{\thefootnote}{\arabic{footnote}}
\begin{abstract}
Inspired by the study of loose cycles in hypergraphs, we define the \textit{loose core} in hypergraphs as a structure
which mirrors the close relationship between cycles and $2$-cores in graphs.
We prove that in the $r$-uniform binomial random hypergraph $H^r(n,p)$,
the order of the loose core undergoes a phase transition at a certain critical threshold
and determine this order, as well as the number of edges, asymptotically in the subcritical and supercritical regimes. 

Our main tool is an algorithm called \coredetector, which enables us to analyse a peeling process for the loose core. By analysing this algorithm we determine the asymptotic degree distribution of vertices in the loose core and in particular how many vertices and edges the loose core contains. As a corollary we obtain an improved upper bound on the length of the longest loose cycle in $H^r(n,p)$.
\end{abstract}

\maketitle

\section{Introduction} \label{sec:intro}

\subsection{Motivation}

One of the first phase transition results  for random graphs  is the celebrated result of Erd\H{o}s and R\'enyi~\cite{ErdosRenyi60} on the emergence of a \emph{giant component} of linear
order when the number of edges passes $\frac{n}{2}$, or from the viewpoint that is now more common,
when the edge probability passes $\frac{1}{n}$. This result has since been strengthened and generalised
in a number of directions. In particular in hypergraphs it has been extended to vertex-components
(e.g.~\cite{BCOK10,BCOK14,BollobasRiordan12,BollobasRiordan17,KaronskiLuczak02,Poole15,SPS85})
as well as to high-order components (e.g.~\cite{CFDGK20,CKK18,CKK19,CKP18}).

The $k$-core of a graph $G$, defined as the maximal subgraph of minimum degree at least~$k$, has been studied extensively in the literature (e.g.~\cite{Cooper2004, JansonLuczak07, kimcore, LuczakCore1991, PittelSpencerWormald96}). 
In random graphs, the $k$-core may be seen as a natural generalisation of the largest component: 
in the case $k=2$, whp\footnote{short for \emph{with high probability}, i.e.\ with probability tending to one as the number of vertices $n\to \infty$.}
a linear-sized $2$-core emerges at the same time as the giant component, and indeed
lies almost entirely within the giant component, while for $k\ge 3$, whp the $k$-core
is identical to the largest $k$-connected subgraph~\cite{LuczakCore1991,Luczak92}. 
In~\cite{LuczakCore1991} {\L}uczak estimated the order of (i.e.\ the number of vertices in)
the $k$-core of $G(n,p)$ and the asymptotic probability that the $k$-core is $k$-connected.
{\L}uczak also showed in~\cite{Luczak92} that in the random graph process, in which edges are added to an empty graph one by one in
a uniformly random order, \whp\ at the moment the $k$-core first becomes non-empty, its order
is already linear in~$n$. 
A crucial milestone was achieved by Pittel, Spencer and Wormald~\cite{PittelSpencerWormald96},
who for $k\geq 3$ determined the threshold probability at which the non-empty $k$-core appears \whp\ and
determined its asymptotic order and size (i.e.\ number of edges).
This was strengthened by Janson and Luczak~\cite{JansonLuczak08}, who proved a bivariate
central limit theorem for the order and size of the $k$-core.
Cain and Wormald~\cite{CainWormald06} determined the asymptotic distribution
of vertex degrees within the $k$-core.
Further research has focussed for example on the robustness of the core against edge deletion~\cite{Sato14}
and how quickly the peeling process arrives at the core~\cite{AchlioptasMolloy15,Gao18,GaoMolloy18,JMT14}.
There are many more results in the literature for cores in random graphs, see e.g. \cite{Cooper2004, JansonLuczak07, kimcore}.

Paths and cycles in random graphs have been investigated at least since 1979 by de la Vega~\cite{delavega79} and somewhat later by Ajtai, Koml\'os,
and Szemer\'edi~\cite{AjtaiKomlosSzemeredi81}.
Regarding the length of the longest path in the random graph $G(n,p)$,
a standard ``sprinkling'' argument (see Lemma~\ref{lem:standardsprinkling} with $r=2$)
shows that in the supercritical regime the length of the longest path and cycle are very similar.
Thus it follows from the results of {\L}uczak~\cite{Luczak91} on the length of the longest cycle
that for $\eps=\eps(n)=o(1)$ and $p=\frac{1+\eps}{n}$ (i.e.\ shortly after the phase transition),
under the assumption $\eps^5n\to\infty$ \whp\ the longest path has length $\Theta(\eps^2 n)$, where explicit constants can be given.
The best-known upper bounds derive from a careful analysis of the $2$-core
and the simple observation that any cycle must lie within the $2$-core.

There are many different ways of generalising the concept of a $k$-core to hypergraphs; some results for these cores can be found in e.g. Molloy~\cite{Molloy2005} and Kim~\cite{kimcore}. However,
in the case $k=2$,
all $k$-cores which have been studied so far do not fully capture the nice connection between the $2$-core and cycles in graphs.
In~\cite{Molloy2005} Molloy determined the threshold for the appearance of a non-trivial $k$-core
(in that paper defined as a subhypergraph where every vertex has degree at least $k$)
in the $r$-uniform binomial random hypergraph $\cH$ for all $r,k\geq 2$ such that $r+k\geq 5$.
The proof relied on a clever heuristic argument which was first introduced by Pittel, Spencer and Wormald in~\cite{PittelSpencerWormald96} and has been adapted by many other authors, see e.g.~\cite{JansonLuczak07, kimcore,Riordan08,skubch2015core}. It turns out that the proofs in~\cite{Molloy2005} can be extended to a wide range of core-type structures.
In the case $k=2$, Dembo and Montanari~\cite{DemboMontanari08} strengthened this by determining
the width of, and examining the behaviour within, the critical window.

One of the most natural  concepts of paths and cycles in hypergraphs
is \emph{loose paths} and \emph{loose cycles} (see Definition~\ref{def:loosecycle}).
A special case of a recent result of Cooley, Garbe, Hng, Kang, Sanhueza-Matamala and Zalla~\cite{cooley2020longest}
shows that the length of the longest loose path in an $r$-uniform binomial random hypergraph undergoes
a phase transition from logarithmic length to linear, and they also determined the critical threshold,
as well as proving upper and lower bounds on the length in the subcritical and supercritical ranges.

Inspired by the substantial body of research on loose cycles, in this paper we introduce the
\emph{loose core} (see Definition~\ref{def:loosecore}), a structure which does indeed capture the connection between cores and cycles in hypergraphs.
Our first main result concerns the degree distribution of vertices in the loose core (see Theorem~\ref{thm:mainresultdegrees}). In fact we prove a stronger result regarding degree distributions of both vertices and edges (see Theorem~\ref{thm:factor:reducedcoredegs}).
As a consequence we can deduce both the asymptotic numbers of vertices and edges in the loose core (see Theorem~\ref{thm:mainresultorder}) and an improved upper bound on the length of the longest loose cycle in an $r$-uniform binomial random hypergraph  (see Theorem~\ref{thm:mainresultcycle}). 

Before stating our main results, in the next section we introduce some definitions and notations which we will use throughout the paper.

\subsection{Setup}\label{def:basicdefinitions}
Given a natural number $r\geq 3$,
an \emph{$r$-uniform hypergraph} consists of a vertex set $V$ and an edge set
$E\subset\binom{V}{r}$, where $\binom{V}{r}$ denotes the set of all $r$-element subsets of $V$. Let $H^r(n,p)$ denote the \emph{$r$-uniform binomial random hypergraph}
on vertex set $[n]$ in which each set of $r$ distinct vertices forms an edge with probability $p$ independently. For any positive integer $k$ we write $[k]\coloneqq\{1,\ldots,k\}$ and $\ [k]_0\coloneqq\{0,\ldots,k\}$.
We also include $0$ in the natural numbers, so we write $\NN=\{0,1,\ldots\}$ and $\NN_{\geq k}\coloneqq\{k,k+1,\ldots\}$.
Throughout the paper, unless otherwise stated any asymptotics are taken as $n\to\infty$. 
In particular, we use the standard
Landau notations $o(\cdot)$, $O(\cdot),\Theta(\cdot),\omega(\cdot)$
with respect to these asymptotics.

The loose core will be defined in terms of two parameters, namely the standard notion of (vertex-)degree
and a notion we call the connection number.

\begin{definition}
Let $H$ be an $r$-uniform hypergraph. Let $d_H(v)$ be the \emph{degree} of a vertex $v$ in $H$ (i.e.\ the number of edges which contain it) and let $\delta(H)$ denote the \emph{minimum (vertex-)degree} of $H$, i.e.\ the smallest degree of any vertex of $H$. For any edge $e\in E(H)$, define the \textit{connection number} $\kappa(e)\in[r]_0$ of $e$ as \[\kappa(e)=\kappa_H(e)\coloneqq|\{v\in e: d_H(v)\geq 2\}|\] and let $\kappa(H)\coloneqq\min\limits_{e\in E(H)}\kappa(e)$.
\end{definition}
We are now ready to define the loose core.
\begin{definition}[Loose core]\label{def:loosecore}
The \textit{loose core} of an $r$-uniform hypergraph $H$ is the maximal subhypergraph $H'$ of $H$ such that 
\begin{enumerate}[label=\textnormal{\textbf{(C\arabic*)}}]
    \item\label{item:loosecorefirstcond} $\delta(H')\geq 1$, 
    \item\label{item:loosecoresecondcond} $\kappa(H')\geq 2$.
\end{enumerate}
If such a subhypergraph does not exist, then we define the loose core to be the empty hypergraph (i.e.\ the hypergraph with no vertices and no edges).
\end{definition}
Note that the loose core is unique, since the union of two hypergraphs each with properties \ref{item:loosecorefirstcond} and \ref{item:loosecoresecondcond} again has these properties. The first condition in Definition~\ref{def:loosecore} simply states that the loose core contains no isolated vertices and the second condition specifies how edges are connected to each other in the loose core. Note that for $r\ge 3$ the loose core might contain vertices of degree $1$, in contrast to the graph case. For $r=2$, Definition~\ref{def:loosecore} coincides with the $2$-core of a graph.

Our motivation to study loose cores arises from the study of loose cycles in hypergraphs which are closely related to loose paths.
\begin{definition}[Loose path/cycle]\label{def:loosecycle}
A \textit{loose path of length $\ell$} in an $r$-uniform hypergraph is a sequence of distinct vertices $v_1,\ldots,v_{\ell(r-1)+1}$ and a sequence of edges $e_1,\ldots,e_{\ell}$, where $e_i=\{v_{(i-1)(r-1)+1},\ldots,v_{(i-1)(r-1)+r}\}$ for $i\in[\ell]$. A \textit{loose cycle of length $\ell$} in an $r$-uniform hypergraph is defined similarly except that $v_{\ell(r-1)+1}=v_1$ (and otherwise all vertices are distinct).
\end{definition}

Note that for $i \in [\ell-1]$ we have $e_i \cap e_{i+1}=\{v_{i(r-1)+1}\}$ (and in the case of a loose cycle,
$e_\ell \cap e_1 = \{v_1\}$), so in particular two consecutive edges
intersect in precisely one vertex.
Observe that a loose cycle satisfies conditions~\ref{item:loosecorefirstcond} and~\ref{item:loosecoresecondcond} of a loose core (Definition~\ref{def:loosecore}) and hence it must be contained in the maximal subhypergraph with these properties,
i.e.\ in the loose core.

We will now define various parameters which will occur often in this paper. Some of these
definitions may seem arbitrary and unmotivated initially, but their meaning will become clearer over the course of the paper.
Given $d>0$, consider a sequence $(d_n)_{n \in \NN}$ of real numbers such that $d_n\to d$.
Then for
$r\in\NN_{\geq 3}$ and $n\in\NN$, set
\[
p=p(r,n)\coloneqq\frac{d_n}{\binom{n-1}{r-1}}, \qquad d^*=d^*(r)\coloneqq\frac{1}{r-1}.
\]
In addition we define a function $F:[0,\infty) \to\mathbb{R}$ by setting
\begin{equation*}\label{eq:gfunctiondefinition}
    F(x)=F_{r,d}(x)\coloneqq\exp{\left(-d\left(1-x^{r-1}\right)\right)}
\end{equation*}
and let $\intvarsurvlim=\intvarsurvlim(r,d)$ be the largest solution\footnote{$\intvarsurvlim$ is well-defined since $0$ is certainly a solution and the set of solutions is closed by continuity.} of the fixed-point equation
\begin{equation}\label{eq:fixedpointequation}
    1-\rho=F(1-\rho).  
\end{equation}
Since the function $F$ is dependent on $d$, so too are the solutions to this equation.
It turns out that $d^*$ is a threshold at which the solution set changes its behaviour from only containing the trivial solution $0$ to containing a unique positive solution (see Claim~\ref{claim:behaviouroffixedpointsol}).

We define
\begin{equation}\label{eq:relatingtherhos}
\intfactsurvlim=\intfactsurvlim(r,d)\coloneqq1-(1-\intvarsurvlim)^{r-1}
\end{equation}
and
\begin{equation}\label{eq:eta}
\oldbeta=\oldbeta(r,d)\coloneqq1-\frac{(r-1)\intvarsurvlim(1-\intvarsurvlim)^{r-2}}{\intfactsurvlim}.
\end{equation}
 Furthermore let 
\begin{equation*}\label{eq:alphadefinition}
    \propnumvs=\propnumvs(r,d)\coloneqq\intvarsurvlim\left(1-d(r-1)(1-\intvarsurvlim)^{r-1}\right),
\end{equation*}
\begin{equation}\label{eq:definitionofeta}
\oldeta=\oldeta(r,d)\coloneqq\frac{d}{r}\left(1-(1-\intvarsurvlim)^ r-r\intvarsurvlim(1-\intvarsurvlim)^ {r-1}\right),
\end{equation}
and
\begin{equation}\label{eq:definitionofgamma}
    \gamma=\gamma(r,d)\coloneqq1-\exp(-d\intfactsurvlim)-d\intfactsurvlim \exp(-d\intfactsurvlim).
\end{equation}

\subsection{Main results: loose cores and cycles in hypergraphs}\label{sec:mainresults}

For any $j\in\NN_{\ge 1}$, let $\numvsdeg{j}{\coreh}$ be the number of vertices of $H= H^r(n,p)$
with degree~$j$ in the loose core $\coreh$ of $H$
and let 
\[\mu_j\coloneqq\numvsdeg{j}{\coreh}\cdot n^{-1}.\]
Let $\numvs{\coreh} = \sum_{j \ge 1}\numvsdeg{j}{\coreh}$ denote the number of vertices and $\numedg{\coreh}$ the number of edges in the loose core  $\coreh$ in $H$.
We also define $\numvsdeg{0}{\coreh}$ to be the number of vertices of $H$ which are not in the loose core of $H$ (so $v_0(\coreh)=n-v(\coreh)$),
and $\mu_0\coloneqq \numvsdeg{0}{\coreh}\cdot n^{-1}$. (Observe that this notation is consistent if, with a slight abuse of terminology,
we view vertices which are not in the loose core as having degree $0$ in the loose core.) We interpret a $\Po(0)$ variable as being deterministically $0$. 

Our first main result describes the asymptotic degree distribution of vertices in the loose core $\coreh$  of $H= H^r(n,p)$.
\begin{theorem}\label{thm:mainresultdegrees}
Let $r,d,p,\intfactsurvlim$ and $\oldbeta$  be as in Section~\ref{def:basicdefinitions} and  let $H=H^r(n,p)$. Let $Y$ be a random variable with distribution $\Po(d\intfactsurvlim)$ and define
\[
    Z \coloneqq \bigg\{\begin{array}{lr}
        Y & \text{if } Y\neq 1,\\
        \mathrm{Ber}\left(\oldbeta\right) & \text{if } Y=1.
        \end{array}\] 
Then there exists $\eps = \eps(n) = o(1)$ such that \whp\ for any constant $j\in\NN$ we have
\[\mu_j=\prob(Z=j)\pm \eps.\]
\end{theorem}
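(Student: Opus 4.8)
\medskip
\noindent\emph{Proof strategy.}
The plan is to first guess the limiting law by a Galton--Watson heuristic (this explains the shape of $Z$), and then to make it rigorous by analysing the peeling process for the loose core via \coredetector\ together with a concentration argument.

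\emph{Stage 1: the branching-process heuristic.}
Locally $\cH$ looks like the Galton--Watson tree $T$ in which every vertex has an independent $\Po(d)$ number of incident ``child'' edges and every edge has $r-1$ further ``child'' vertices. Peeling the loose core on $T$ is warning propagation from the leaves: call a vertex \emph{heavy} if it has degree at least $2$ in the surviving subtree, call a non-root vertex \emph{supported from below} if, assuming it is heavy, at least one of its child edges would survive, and call a non-root edge \emph{supported from below} if at least one of its $r-1$ child vertices is supported from below. One checks that a non-root vertex is supported from below with probability $\intvarsurvlim$ and a non-root edge with probability $\intfactsurvlim$, where these probabilities satisfy precisely the coupled equations~\eqref{eq:fixedpointequation} and~\eqref{eq:relatingtherhos}; choosing the \emph{largest} root of~\eqref{eq:fixedpointequation} corresponds to taking the maximal fixed point of the monotone peeling, which is the relevant one. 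Now fix the root $v$ of $T$ and let $K$ be the number of its incident edges that are supported from below, so that $K\sim\Po(d\intfactsurvlim)$. If $K\ge 2$, then $v$ becomes heavy, each of these $K$ edges thereby acquires a second heavy vertex and survives, while no other edge at $v$ survives, so $v$ has degree $K$ in $\coreh$; if $K=1$, then $v$ is not heavy and the unique candidate edge survives if and only if at least two of its $r-1$ child vertices are supported from below, which conditional on $K=1$ has probability $\oldbeta$ by~\eqref{eq:eta}; and if $K=0$ then $v$ has degree $0$. Hence the degree of $v$ in the loose core has exactly the law of $Z$, so $\prob(Z=j)$ is the only possible limit of $\mu_j$.

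\emph{Stage 2: transfer to $\cH$.}
To prove concentration I would run \coredetector\ (and the auxiliary slow peeling \slowpeel), which exposes the edges of $\cH$ gradually while performing the peeling and maintaining a \emph{refined state}: the numbers of surviving vertices of current degree $1$ and of current degree $\ge 2$, together with the numbers of surviving edges of each connection profile. The refinement is necessary because the peeling is governed by the connection number rather than by vertex degree, so the plain degree sequence is not a sufficient statistic; the refined state, however, does evolve (asymptotically) as a Markov chain with bounded increments. The differential equation method --- or, equivalently, a Doob/Azuma exposure-martingale argument --- then shows that up to the stopping time $\Tstop$ at which no edge can be peeled, the refined state is \whp\ concentrated around the solution of the associated system of differential equations, whose stable point is controlled by $\intvarsurvlim$ and $\intfactsurvlim$. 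For $d\le d^*$ this stable point is the trivial one, the loose core is \whp\ sublinear, and $\mu_0=1-o(1)$, $\mu_j=o(1)$ for $j\ge1$, matching $Z$ since then $d\intfactsurvlim=0$; for $d>d^*$ the process stabilises at the nontrivial fixed point and the hypergraph surviving at time $\Tstop$ is $\coreh$. Reading off the final state gives, for each fixed $j$, that $\numvsdeg{j}{\coreh}=\bigl(\prob(Z=j)+o(1)\bigr)n$ \whp; a standard truncation (using $\sum_j\mu_j=\sum_j\prob(Z=j)=1$ and that both tails are negligible) then yields a single $\eps=\eps(n)=o(1)$ that works simultaneously for all constant $j$. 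In fact it is cleaner to prove first the stronger joint statement about vertex degrees and edge connection profiles in $\coreh$ (Theorem~\ref{thm:factor:reducedcoredegs}) --- the joint control is exactly what closes the differential equations --- and then project onto the vertex-degree coordinate.

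\emph{Main obstacle.}
The main difficulty, compared with the classical $k$-core, is precisely that deletion of a single edge can simultaneously demote several vertices from degree $\ge2$ to degree $1$ and so trigger a cascade of further deletions, and that (unlike in graphs) the loose core genuinely contains degree-$1$ vertices. One therefore has to work with the refined vertex/edge classification and control these cascades, showing both that they are \whp\ short and that the Lipschitz and boundedness hypotheses of the differential equation method survive the refinement; this is where essentially all of the work lies.
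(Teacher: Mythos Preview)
Your Stage~1 heuristic is correct and is essentially the intuition the paper formalises. But Stage~2 is not what the paper does, and as written it has real gaps.

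The paper does \emph{not} analyse the loose-core peeling on $H$ directly. Its central move is to pass to the factor graph $G$ and study the \emph{reduced core} $\corer$, i.e.\ the maximal subgraph with no degree-$1$ nodes (the $2$-core with isolated nodes kept). Proposition~\ref{prop:reducedtoloose} shows that the padded/loose core is obtained from $\corer$ simply by reattaching each surviving factor node to its missing neighbours until it has degree~$r$. This reduces the awkward loose-core condition (``each edge has two vertices of degree $\ge 2$'') to an ordinary $2$-core condition on a bipartite graph. Theorem~\ref{thm:mainresultdegrees} is then a short deduction from Theorem~\ref{thm:factor:reducedcoredegs}: for $j\ge 2$ one has $\mu_j=\varprop{j}$ identically; $\mu_1 n$ equals the number of reattached leaves, namely $\sum_{j\ge 2}(r-j)\factprop{j}m$, and a direct computation shows this equals $\oldbeta\cdot\Pr(\Po(d\intfactsurvlim)=1)\cdot n$; and $\mu_0$ follows by complement. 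Note in particular that Theorem~\ref{thm:factor:reducedcoredegs} is about degrees in $\corer$, not about ``connection profiles in $\coreh$'' as you state; these are different objects, and the translation between them is precisely the content of the deduction.

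You also misread \coredetector: it is a \emph{local} algorithm. Given a root $w$ and a depth $\ell$, it only looks at $N_{\le\ell+1}(w)$ and peels from the outermost layer inwards; its output $\hd_1(w)$ coincides with $d_{G_\ell}(w)$ whenever the neighbourhood is a tree (Lemma~\ref{lem:algvspeeling}). The paper computes $\expec\bigl(\varpropl{j}{\ell}\bigr)$ from this local tree analysis, then proves concentration by a variable-exposure Azuma martingale (Lemma~\ref{lem:concentration}); no differential equations and no global state vector are tracked. A separate subcritical-branching argument (Section~\ref{sec:prooflowerbound}) shows that after round $\ell$ only $o(n)$ further nodes are disabled. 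Your alternative route via the differential-equation method is plausible in principle, but the state you propose, namely $(\#\text{deg-}1,\ \#\text{deg-}\ge 2,\ \text{edge connection counts})$, is too coarse: it cannot recover $\mu_j$ for individual $j\ge 2$, and without the factor-graph reduction the loose-core peeling is not Markov in that state (deleting an edge with connection number~$1$ can demote its sole heavy vertex, which then changes the connection numbers of all other edges through that vertex, and those counts depend on the unrecorded exact degree).
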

Our second main result describes the asymptotic numbers of vertices and edges in the loose core  $\coreh$  of $H= H^r(n,p)$.

\begin{theorem}\label{thm:mainresultorder}
Let $r,p, \alpha$ and $\beta$  be as in Section~\ref{def:basicdefinitions} and  let $H=H^r(n,p)$.
Then \whp
		\[\numvs{\coreh}=(\propnumvs+o(1)) n\]
		and
		\[\numedg{\coreh}=(\oldeta+o(1))n.\]
\end{theorem}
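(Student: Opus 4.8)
The plan is to derive Theorem~\ref{thm:mainresultorder} as a consequence of the (stronger) degree-distribution result in Theorem~\ref{thm:mainresultdegrees}, supplemented by a separate argument for the edge count. First I would handle the vertex count. By definition $\numvs{\coreh}=\sum_{j\ge 1}\numvsdeg{j}{\coreh}=n\sum_{j\ge 1}\mu_j = n(1-\mu_0)$, so it suffices to show $\mu_0 = 1-\propnumvs+o(1)$ whp, equivalently $\sum_{j\ge 1}\Pr(Z=j) = \Pr(Z\ne 0) = \propnumvs$. Using the description of $Z$ in Theorem~\ref{thm:mainresultdegrees}: $\Pr(Z=0) = \Pr(Y=0) + \Pr(Y=1)(1-\oldbeta) = e^{-d\intfactsurvlim} + d\intfactsurvlim e^{-d\intfactsurvlim}(1-\oldbeta)$. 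Plugging in $\oldbeta = 1 - \frac{(r-1)\intvarsurvlim(1-\intvarsurvlim)^{r-2}}{\intfactsurvlim}$ and $\intfactsurvlim = 1-(1-\intvarsurvlim)^{r-1}$, together with the fixed-point relation $1-\intvarsurvlim = F(1-\intvarsurvlim) = \exp(-d(1-(1-\intvarsurvlim)^{r-1})) = \exp(-d\intfactsurvlim)$, this is a direct algebraic simplification; the key substitution $e^{-d\intfactsurvlim} = 1-\intvarsurvlim$ should collapse the expression to $1-\propnumvs = 1-\intvarsurvlim(1-d(r-1)(1-\intvarsurvlim)^{r-1})$. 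One subtlety: Theorem~\ref{thm:mainresultdegrees} controls $\mu_j$ only for each fixed constant $j$ with an additive $\eps$ error, so to pass from finitely many $\mu_j$ to the full sum $\sum_{j\ge 1}\mu_j$ I need a uniform tail bound — essentially, that for some slowly growing $J=J(n)$, whp $\sum_{j>J}\numvsdeg{j}{\coreh}/n = o(1)$. This follows from the fact that the sum of all degrees in $\coreh$ is $r\cdot\numedg{\coreh} = O(n)$ (each vertex has degree $\le n^{r-1}p = O(1)$ in expectation-per-edge sense, but more simply degrees in the core are stochastically dominated by the degrees in $H$, which are $\Bi(\binom{n-1}{r-1},p)$, concentrated around $d$), so the number of vertices of degree exceeding $J$ is $O(n/J)=o(n)$; alternatively invoke whatever concentration of $\numedg{\coreh}$ comes out of the $\coredetector$ analysis.

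Next, for the edge count $\numedg{\coreh}$: the cleanest route is a double-counting/handshake identity. Every edge $e$ of the loose core has connection number $\kappa(e)\ge 2$, but edges may also contain core-vertices of degree $1$. A robust approach is to count incidences between core-vertices and core-edges in two ways. On one hand, $\sum_{v\in\coreh} d_{\coreh}(v) = \sum_{j\ge 1} j\cdot\numvsdeg{j}{\coreh} = r\cdot\numedg{\coreh}$, since each edge contributes $r$ to the left side. Hence $\numedg{\coreh} = \frac{1}{r}\sum_{j\ge 1}j\,\numvsdeg{j}{\coreh} = \frac{n}{r}\sum_{j\ge 1}j\mu_j$, and by Theorem~\ref{thm:mainresultdegrees} (plus the same uniform-tail control, now needing $\sum_{j>J} j\mu_j = o(1)$, which again follows from $\sum_j j\mu_j = r\,\numedg{\coreh}/n = O(1)$ and a second-moment/tail bound on degrees) this equals $\frac{n}{r}\EE[Z] + o(n)$. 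So I would compute $\EE[Z] = \EE[Y] - \Pr(Y=1)\cdot 1 + \Pr(Y=1)\cdot\oldbeta = d\intfactsurvlim - d\intfactsurvlim e^{-d\intfactsurvlim}(1-\oldbeta)$, then substitute $e^{-d\intfactsurvlim}=1-\intvarsurvlim$ and $1-\oldbeta = \frac{(r-1)\intvarsurvlim(1-\intvarsurvlim)^{r-2}}{\intfactsurvlim}$ to get $\EE[Z] = d\intfactsurvlim - d(r-1)\intvarsurvlim^2(1-\intvarsurvlim)^{r-1}$, and finally check that $\frac{1}{r}\EE[Z]$ equals $\oldeta = \frac{d}{r}(1-(1-\intvarsurvlim)^r - r\intvarsurvlim(1-\intvarsurvlim)^{r-1})$. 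Expanding $\intfactsurvlim = 1-(1-\intvarsurvlim)^{r-1}$ gives $d\intfactsurvlim = d(1-(1-\intvarsurvlim)^{r-1})$; the identity $1-(1-\intvarsurvlim)^r = (1-\intvarsurvlim)^{r-1}\cdot\intvarsurvlim + (1 - (1-\intvarsurvlim)^{r-1})$ should reconcile the two forms after collecting the $(1-\intvarsurvlim)^{r-1}$ and $(1-\intvarsurvlim)^r$ terms — a routine but careful polynomial manipulation.

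I would organise the write-up as: (i) a short lemma stating the two exact identities $\numvs{\coreh} = n\sum_{j\ge1}\mu_j$ and $r\,\numedg{\coreh} = \sum_{j\ge1}j\,\numvsdeg{j}{\coreh}$, which are purely combinatorial; (ii) a tail estimate showing that whp $\sum_{j>J}\numvsdeg{j}{\coreh} = o(n)$ and $\sum_{j>J}j\,\numvsdeg{j}{\coreh} = o(n)$ for suitable $J\to\infty$ slowly — I expect this to lean on the fact that whp the degree sequence of $H$ itself has no vertex of degree exceeding, say, $\log n$, combined with monotonicity of degrees under taking subhypergraphs, or on an explicit bound on $\numedg{\coreh}$ extracted from the $\coredetector$ analysis; (iii) combine (i), (ii) and Theorem~\ref{thm:mainresultdegrees} to get $\numvs{\coreh} = (1-\Pr(Z=0)+o(1))n$ and $\numedg{\coreh} = (\tfrac1r\EE[Z]+o(1))n$; (iv) a self-contained algebra block verifying $1-\Pr(Z=0) = \propnumvs$ and $\tfrac1r\EE[Z] = \oldeta$ using the fixed-point identity $e^{-d\intfactsurvlim} = 1-\intvarsurvlim$ and the definitions \eqref{eq:relatingtherhos}, \eqref{eq:eta}, \eqref{eq:definitionofeta}.

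The main obstacle I anticipate is step (ii), the uniform control of the degree tail: Theorem~\ref{thm:mainresultdegrees} as stated gives information only for each fixed $j$, so transferring to the infinite sums $\sum_j\mu_j$ and $\sum_j j\mu_j$ genuinely requires an a~priori bound that the core's degree distribution has light tails uniformly in $n$. If the $\coredetector$ analysis already yields concentration of $\numedg{\coreh}$ around $\oldeta n$ (which seems likely given the paper's structure), then the tail bound for $\sum_j j\mu_j$ is immediate and the tail bound for $\sum_j\mu_j$ follows from Markov; but if not, one must argue separately — most cleanly via the observation that $d_{\coreh}(v)\le d_H(v)$ and that whp $\max_v d_H(v) = O(\log n/\log\log n)$, so $\sum_{j>J}j\,\numvsdeg{j}{\coreh} \le (\max_v d_H(v))\cdot\#\{v: d_H(v)>J\}$ and a Chernoff bound on $\Bi(\binom{n-1}{r-1},p)$ makes $\#\{v: d_H(v) > J\} = o(n)$ once $J\to\infty$. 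The algebraic identities in step (iv) are elementary but error-prone, so I would carry out each substitution explicitly and cross-check the supercritical limit against the degenerate case $\intvarsurvlim = 0$ (where all four quantities correctly vanish).
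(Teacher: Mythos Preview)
Your approach for the vertex count is essentially the paper's, but you are making it harder than necessary. Since Theorem~\ref{thm:mainresultdegrees} already gives $\mu_0 = \Pr(Z=0)\pm\eps$ for the single value $j=0$, you get $\numvs{\coreh}=n(1-\mu_0)=(1-\Pr(Z=0)+o(1))n$ immediately---there is no infinite sum to control and hence no tail bound is needed at this stage. The algebraic verification that $1-\Pr(Z=0)=\propnumvs$ via $e^{-d\intfactsurvlim}=1-\intvarsurvlim$ is exactly what the paper does.

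For the edge count you take a genuinely different route. The paper does \emph{not} go through the handshake identity and $\EE[Z]$; instead it observes that the edges of the loose core correspond precisely to the non-isolated factor nodes of the reduced core, so $\numedg{\coreh}=(1-\factprop{0})m$, and then reads off $\factprop{0}=\Pr(\widetilde\Bi(r,\intvarsurvlim)=0)\pm\eps$ directly from Theorem~\ref{thm:factor:reducedcoredegs} (together with $m=(1+o(1))dn/r$ from Proposition~\ref{prop:numberfactornodes}). This again uses only a single value $j=0$ and so sidesteps any tail issue entirely. Your handshake route is also valid and the algebra does close up (note a slip: you should get $\EE[Z]=d\intfactsurvlim - d(r-1)\intvarsurvlim(1-\intvarsurvlim)^{r-1}$, not $\intvarsurvlim^2$), but it genuinely requires the tail estimate you flag in step~(ii), since $\sum_{j\ge 1} j\mu_j$ is an infinite sum and Theorem~\ref{thm:mainresultdegrees} only controls each term up to additive $\eps$. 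Your proposed fix via $d_{\coreh}(v)\le d_H(v)$ and a Chernoff bound on $\#\{v:d_H(v)>J\}$ works fine. The trade-off: the paper's route is shorter and avoids the tail argument by exploiting the factor-node half of Theorem~\ref{thm:factor:reducedcoredegs}; your route has the mild advantage of using only the vertex-side statement (Theorem~\ref{thm:mainresultdegrees}) as a black box.
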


By analysing the loose core we obtain an upper bound on the length of the longest loose cycle in $\cH$.

\begin{theorem}\label{thm:mainresultcycle} 
Let $r,p, \beta$ and $\gamma$  be as in Section~\ref{def:basicdefinitions} and  let $H=H^r(n,p)$.
Let $L_C$ be the length of the longest loose cycle in $H$. Then \whp
\[L_C\leq \left(\min\{\oldeta,\gamma\}+o(1)\right)\cdot n.\]
\end{theorem}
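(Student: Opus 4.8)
\begin{proof}[Proof sketch]
\end{proof}

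\noindent\emph{Proof proposal.} The plan is to obtain both bounds directly from the structure of a loose cycle, feeding in Theorems~\ref{thm:mainresultdegrees} and~\ref{thm:mainresultorder}; essentially all of the real work has already been done in proving those. I would begin with two elementary deterministic observations about a loose cycle $C$ of length $\ell$ in an $r$-uniform hypergraph. First, $C$ has exactly $\ell$ edges, and exactly $\ell$ of its vertices have degree $2$ in $C$ --- namely the pairwise distinct vertices shared by two consecutive edges --- while the remaining $\ell(r-2)$ vertices of $C$ have degree $1$. Secondly, as noted after Definition~\ref{def:loosecycle}, $C$ satisfies~\ref{item:loosecorefirstcond} and~\ref{item:loosecoresecondcond} and hence is a subhypergraph of the loose core $\coreh$. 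Consequently the $\ell$ edges of $C$ are distinct edges of $\coreh$, so $\ell\le\numedg{\coreh}$; and each of the $\ell$ degree-$2$ vertices of $C$ lies in $\coreh$ and has degree at least $2$ there (a vertex's degree cannot decrease on passing to a larger subhypergraph), so $\coreh$ contains at least $\ell$ vertices of degree $\ge 2$.

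Applying this to a longest loose cycle gives the deterministic inequalities $L_C\le\numedg{\coreh}$ and $L_C\le\sum_{j\ge 2}\numvsdeg{j}{\coreh}$, so it remains to prove that whp $\numedg{\coreh}=(\oldeta+o(1))n$ and $\sum_{j\ge 2}\numvsdeg{j}{\coreh}=(\gamma+o(1))n$. The first is exactly the edge estimate of Theorem~\ref{thm:mainresultorder}. For the second I would not sum the estimate of Theorem~\ref{thm:mainresultdegrees} over all $j$ (which would need a tail bound), but instead write $\sum_{j\ge 2}\mu_j=1-\mu_0-\mu_1$, apply Theorem~\ref{thm:mainresultdegrees} for the two constants $j=0$ and $j=1$ and intersect the two whp events, and finally identify $1-\prob(Z=0)-\prob(Z=1)$ with $\gamma$. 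For this last point, observe that $Z=Y$ whenever $Y\neq 1$ while $Z\in\{0,1\}$ when $Y=1$, so $\prob(Z=0)+\prob(Z=1)=\prob(Y\le 1)=\exp(-d\intfactsurvlim)+d\intfactsurvlim\exp(-d\intfactsurvlim)$; comparing with~\eqref{eq:definitionofgamma} gives $1-\prob(Z=0)-\prob(Z=1)=\gamma$. Intersecting the finitely many resulting whp events then yields $L_C\le(\min\{\oldeta,\gamma\}+o(1))n$.

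There is no serious obstacle internal to this argument, since the analytic heavy lifting is already packaged in Theorems~\ref{thm:mainresultdegrees} and~\ref{thm:mainresultorder}; the points that need a little care are that the $\ell$ ``connector'' vertices of a longest loose cycle really are pairwise distinct (so that they contribute $\ell$, not fewer, to the degree-$\ge 2$ count in $\coreh$), that one should complement rather than sum term by term when estimating $\sum_{j\ge 2}\mu_j$, and the small bookkeeping identity $\gamma=\prob(Y\ge 2)$ for $Y\sim\Po(d\intfactsurvlim)$. I would also remark that the two bounds are genuinely incomparable --- which of $\oldeta$ and $\gamma$ is smaller depends on $r$ and $d$ --- which is why the minimum appears; and that in the subcritical regime $\intvarsurvlim=0$, hence $\intfactsurvlim=0$ and therefore $\oldeta=\gamma=0$, so the statement correctly reduces to $L_C=o(n)$ there.
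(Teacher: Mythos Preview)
Your proposal is correct and follows essentially the same route as the paper: bound $L_C$ by the number of edges in the core (giving the $\beta$ bound) and by the number of vertices of degree at least~$2$ in the core (giving the $\gamma$ bound), then read off both quantities from the degree-distribution results. The only difference is one of packaging: the paper carries out the argument in the factor graph, observing that a loose cycle yields a cycle in $G$ which must lie in the reduced core $\corer$, and then bounds $L_C$ by $(1-\varprop{0})n$ and $(1-\factprop{0})m$ using Theorem~\ref{thm:factor:reducedcoredegs} directly; you instead stay in the hypergraph and invoke the downstream Theorems~\ref{thm:mainresultdegrees} and~\ref{thm:mainresultorder}. Since variable nodes of degree $\ge 2$ in $\corer$ coincide with vertices of degree $\ge 2$ in $\coreh$, and non-isolated factor nodes of $\corer$ coincide with edges of $\coreh$, the two computations are literally the same count viewed from either side of Proposition~\ref{prop:reducedtoloose}. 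Your complementation trick $\sum_{j\ge 2}\mu_j = 1-\mu_0-\mu_1$ and the identity $\Pr(Z\le 1)=\Pr(Y\le 1)$ are exactly right and mirror the paper's computation of $1-\varprop{0}$.
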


In fact, a standard ``sprinkling'' argument shows that whp the longest loose path in $\cH$ is not significantly longer than the longest loose cycle and therefore we obtain the following corollary.

\begin{corollary}\label{cor:upperboundlongestpath} 
Let $r,p, \beta$ and $\gamma$  be as in Section~\ref{def:basicdefinitions} and  let $H=H^r(n,p)$. 
Let $L_P$ be the length of the longest loose path in $H$. Then \whp
\[L_P\leq \left(\min\{\oldeta,\gamma\}+o(1)\right)\cdot n.\]
\end{corollary}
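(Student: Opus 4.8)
The plan is to deduce the corollary from Theorem~\ref{thm:mainresultcycle} together with a routine sprinkling argument (this is the purpose of Lemma~\ref{lem:standardsprinkling}) showing that \whp\ the longest loose path of $\cH$ exceeds the longest loose cycle by only $o(n)$. Write $H=\cH$ and let $P$ be a longest loose path in $H$, of length $L_P$; if $L_P=o(n)$ there is nothing to prove, since $\oldeta,\gamma\ge 0$, so I may assume $L_P\ge cn$ for a constant $c>0$. I would first record the elementary a~priori fact that \whp\ $H$ has at least $c_1n$ isolated vertices for some constant $c_1>0$: each vertex is isolated with probability $(1-p)^{\binom{n-1}{r-1}}\to e^{-d}$, and the count of isolated vertices concentrates; since an isolated vertex lies on no loose path, \whp\ $|V\setminus V(P)|\ge c_1n$.

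Next I would sprinkle. Fix $\psi=\psi(n):=n^{-1/4}$, put $p_2:=\psi/\binom{n-1}{r-1}$, let $H_2\sim H^r(n,p_2)$ be independent of $H$, and set $H':=H\cup H_2$, so that $H'\sim H^r(n,p')$ with $1-p'=(1-p)(1-p_2)$ and hence $p'\binom{n-1}{r-1}=d_n+\psi+o(1)\to d$. Then Theorem~\ref{thm:mainresultcycle} applies to $H'$ with the \emph{same} constants $\oldeta=\oldeta(r,d)$ and $\gamma=\gamma(r,d)$, so \whp\ the longest loose cycle of $H'$ has length at most $(\min\{\oldeta,\gamma\}+o(1))n$. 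As $H\subseteq H'$, it therefore suffices to show that \whp\ $H'$ contains a loose cycle of length at least $L_P-o(n)$, for then $L_P\le(\min\{\oldeta,\gamma\}+o(1))n$ as claimed.

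To build that cycle I would truncate $P$ near both ends and close it up with a single sprinkled edge. Writing the edges of $P$ in order as $e_1,\dots,e_{L_P}$, for $0\le k,j\le K:=\lceil n^{3/4}\rceil$ the sub-path $P_{k,j}$ with edges $e_{k+1},\dots,e_{L_P-j}$ is a loose path with endpoints $a_k:=v_{k(r-1)+1}$ and $b_j:=v_{(L_P-j)(r-1)+1}$ and length at least $L_P-2K=L_P-o(n)$. If some edge of $H_2$ has the form $h=\{a_k,b_j\}\cup S$ with $k,j\le K$ and $S\subseteq V\setminus V(P)$ of size $r-2$, then (noting $a_k\ne b_j$, since $k+j\le 2K<L_P$) appending $h$ to $P_{k,j}$ gives a loose cycle of $H'$ of length $L_P-k-j+1\ge L_P-2K+1=L_P-o(n)$. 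Since the vertices $a_k,b_j$ and the set $V\setminus V(P)$ are determined by $H$ and hence independent of $H_2$, and $|V\setminus V(P)|=\Theta(n)$, the expected number of such edges present in $H_2$ is $\Theta(K^2 n^{r-2}p_2)=\Theta(K^2\psi/n)=\Theta(n^{1/4})\to\infty$; as distinct $r$-sets appear in $H_2$ independently and each admissible $h$ arises from only $O(1)$ triples $(k,j,S)$, a second-moment estimate shows that \whp\ at least one such $h$ exists, producing the desired cycle.

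The argument is entirely standard; the points that need (mild) care are the a~priori bound guaranteeing that the longest loose path leaves a linear number of vertices free for the closing edge, and the balancing of parameters — choosing $\psi\to 0$ and $K\to\infty$ with $K=o(L_P)$ yet $K^2\psi/n\to\infty$ — so that a closing edge exists \whp\ while only $o(n)$ of the path's length is given up. Nothing here is specific to loose cycles versus loose paths, which is why it is most naturally packaged as the general sprinkling Lemma~\ref{lem:standardsprinkling} and then combined with Theorem~\ref{thm:mainresultcycle}.
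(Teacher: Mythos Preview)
Your proposal is correct and follows essentially the same route as the paper: sprinkle a small amount of extra randomness, close the longest loose path into a loose cycle losing only $o(n)$ edges, and apply Theorem~\ref{thm:mainresultcycle} to the slightly denser hypergraph. The paper packages the sprinkling step as Lemma~\ref{lem:standardsprinkling}/Corollary~\ref{cor:sprinkling} and then invokes the continuity Lemma~\ref{lem:continuity} to argue that $\oldeta,\gamma$ change by $o(1)$; your choice of $\psi=n^{-1/4}\to 0$ ensures $p'\binom{n-1}{r-1}\to d$, so the limiting constants $\oldeta(r,d),\gamma(r,d)$ are literally unchanged and the continuity lemma is not needed --- a small but legitimate shortcut within the same overall strategy.
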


As mentioned previously, Claim~\ref{claim:behaviouroffixedpointsol} will state that $d^*=\frac{1}{r-1}$ is a threshold at which the solution set of~\eqref{eq:fixedpointequation} changes its behaviour from only containing $0$ to containing a unique positive solution. Together with Theorem~\ref{thm:mainresultcycle} and Corollary~\ref{cor:upperboundlongestpath} (and recalling the definitions of $\beta,\gamma$
in~\eqref{eq:definitionofeta} and~\eqref{eq:definitionofgamma}) this implies that $d^*=\frac{1}{r-1}$ is a threshold for the existence of a loose path/cycle of linear order,
and it is interesting in particular to examine the behaviour shortly after the phase transition.
\begin{corollary}\label{cor:shortlyafterphasetransition}
Let $r\in\NN_{\geq 3}$, let $\eps>0$ be constant and let $p=\frac{1+\eps}{(r-1)\binom{n-1}{r-1}}$. Let $L_C$ and $L_P$ be the length of the longest loose cycle and  the longest loose  path in $\cH$.  Then \whp
\[L_C\leq L_P+1\leq \left(\frac{2\eps^2}{(r-1)^2}+O(\eps^3)\right)\cdot n.\]
\end{corollary}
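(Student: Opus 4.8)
The plan is to deduce the statement from Theorem~\ref{thm:mainresultcycle} and Corollary~\ref{cor:upperboundlongestpath} by a careful asymptotic analysis of the quantities $\beta=\beta(r,d)$ and $\gamma=\gamma(r,d)$ as $d=d(n)\to d:=\frac{1+\eps}{r-1}$. Here $\eps>0$ is a \emph{fixed} constant, so all the parameters $\rho_*,\hat\rho_*,\beta,\gamma$ are genuine constants, and it suffices to show that $\min\{\beta,\gamma\}=\frac{2\eps^2}{(r-1)^2}+O(\eps^3)$; the inequality $L_C\le L_P+1$ already follows from the sprinkling argument used for Corollary~\ref{cor:upperboundlongestpath} (a loose path of length $\ell$ contains a loose cycle of length at least $\ell-1$ after revealing a few more edges), so I will only need to bound $L_P$, or equivalently $\min\{\beta,\gamma\}$, from above.

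First I would analyse the fixed-point parameter $\rho_*$. With $d=\frac{1+\eps}{r-1}$, equation~\eqref{eq:fixedpointequation} reads $1-\rho=\exp(-d(1-(1-\rho)^{r-1}))$. Write $\rho_*=c_1\eps+c_2\eps^2+O(\eps^3)$ and substitute: using $(1-\rho)^{r-1}=1-(r-1)\rho+\binom{r-1}{2}\rho^2+O(\rho^3)$ one gets $1-(1-\rho)^{r-1}=(r-1)\rho-\binom{r-1}{2}\rho^2+O(\rho^3)$, so the right-hand side becomes $\exp(-(1+\eps)(\rho-\tfrac{r-2}{2}\rho^2+O(\rho^3)))$. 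Expanding both sides to second order in $\eps$ and matching coefficients yields $c_1=\frac{2}{r-1}$ (the factor $2$ is exactly the familiar branching-process computation, since the expected offspring is $1+\eps$) and one more coefficient, giving $\rho_*=\frac{2\eps}{r-1}+O(\eps^2)$ and hence $\hat\rho_*=1-(1-\rho_*)^{r-1}=(r-1)\rho_*+O(\rho_*^2)=2\eps+O(\eps^2)$.

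Next I would plug these expansions into the definitions~\eqref{eq:definitionofeta} and~\eqref{eq:definitionofgamma}. For $\gamma=1-e^{-d\hat\rho_*}-d\hat\rho_*e^{-d\hat\rho_*}$: setting $t:=d\hat\rho_*=\frac{1+\eps}{r-1}\cdot(2\eps+O(\eps^2))=\frac{2\eps}{r-1}+O(\eps^2)$ and using $1-e^{-t}-te^{-t}=1-(1+t)(1-t+\tfrac{t^2}{2}-\cdots)=\tfrac{t^2}{2}+O(t^3)$, we obtain $\gamma=\tfrac{t^2}{2}+O(t^3)=\frac{2\eps^2}{(r-1)^2}+O(\eps^3)$. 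For $\beta=\frac{d}{r}\big(1-(1-\rho_*)^r-r\rho_*(1-\rho_*)^{r-1}\big)$ (note $\beta$ here denotes the edge constant, written $\oldeta$ in the source): expanding $1-(1-\rho_*)^r-r\rho_*(1-\rho_*)^{r-1}$ to second order in $\rho_*$ gives $\binom{r}{2}\rho_*^2-r(r-1)\rho_*^2+O(\rho_*^3)=-\binom{r}{2}\rho_*^2+O(\rho_*^3)$... which is negative to leading order, so I must be more careful — in fact the correct bracket expansion is $r\rho_*-\binom r2\rho_*^2-r\rho_*+r(r-1)\rho_*^2+O(\rho_*^3)=\binom r2\rho_*^2+O(\rho_*^3)$, so $\beta=\frac{d}{r}\binom r2\rho_*^2+O(\rho_*^3)=\frac{r-1}{2}d\rho_*^2+O(\eps^3)=\frac{r-1}{2}\cdot\frac{1}{r-1}\cdot\frac{4\eps^2}{(r-1)^2}+O(\eps^3)=\frac{2\eps^2}{(r-1)^2}+O(\eps^3)$. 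Thus $\beta$ and $\gamma$ agree to leading order, so $\min\{\beta,\gamma\}=\frac{2\eps^2}{(r-1)^2}+O(\eps^3)$, and the claim follows.

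The main obstacle is purely bookkeeping: getting the second-order term of $\rho_*$ in $\eps$ right (the naive first-order substitution is not self-consistent because the quadratic correction to $1-(1-\rho)^{r-1}$ and the $\eps\rho$ cross term in the exponent both contribute at order $\eps^2$ to $\rho_*$), and then carrying enough terms through the cancellation-heavy expansions of $\beta$ and $\gamma$ — both leading expressions vanish to first order in $\rho_*$, so one genuinely needs the $\rho_*^2$ coefficients. I would organise this by first proving a clean lemma that $\rho_*=\frac{2\eps}{r-1}+O(\eps^2)$ and $\hat\rho_*=2\eps+O(\eps^2)$ uniformly for small $\eps$, then treating $\beta$ and $\gamma$ as smooth functions of $\rho_*$ (resp. $\hat\rho_*$) and $d$ and Taylor-expanding, which keeps the $O(\eps^3)$ error terms under control without tracking individual monomials.
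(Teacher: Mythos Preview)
Your approach is essentially the same as the paper's: expand $\rho_*$ (and hence $\hat\rho_*$) to first order in $\eps$ via the fixed-point equation, then Taylor-expand $\gamma$ (and $\beta$) to second order in $\rho_*$ to obtain $\frac{2\eps^2}{(r-1)^2}+O(\eps^3)$. The paper is slightly lazier and only bounds $\gamma$, using $\min\{\beta,\gamma\}\le\gamma$; you compute both, which the paper remarks in passing gives the same leading term.

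One small correction: your justification of the inequality $L_C\le L_P+1$ is backwards. Sprinkling turns a long loose \emph{path} into a nearly-as-long loose \emph{cycle} (in a slightly denser hypergraph), i.e.\ it gives $L_C\gtrsim L_P$, not the direction you need. The inequality $L_C\le L_P+1$ is instead a trivial deterministic fact: a loose cycle of length $\ell$ contains a loose path of length $\ell-1$ (delete one edge), so $L_P\ge L_C-1$. The paper treats this as obvious and does not comment on it.
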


In other words, we have an upper bound on $L_C$ and $L_P$ in the barely supercritical regime.
For a corresponding lower bound, we will quote (a special case of) a result from~\cite{cooley2020longest}
(which we later state formally as Theorem~\ref{thm:pathsresult}) which gives a lower bound on $L_P$.
By applying the sprinkling argument again we also obtain a lower bound on $L_C$,
and together with Corollary~\ref{cor:shortlyafterphasetransition} we obtain the following.

\begin{theorem}\label{thm:bestknownresultcycles}
Let $r\in\NN_{\geq 3}$, let $\eps>0$ be constant and let $p=\frac{1+\eps}{(r-1)\binom{n-1}{r-1}}$.  Let $L_C$ and $L_P$ be the length of the longest loose cycle and  the longest loose  path in $\cH$.   Then \whp\ 
\[
\left(\frac{\eps^2 }{4(r-1)^2}+O(\eps^3)\right)\cdot n\leq L_C\leq L_P+1\leq\left(\frac{2\eps^2}{(r-1)^2}+O(\eps^3)\right)\cdot n .
\]
\end{theorem}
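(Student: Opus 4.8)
The plan is to assemble the displayed chain of inequalities from three already-available ingredients: the upper bounds of Corollary~\ref{cor:shortlyafterphasetransition}, the lower bound on $L_P$ from Theorem~\ref{thm:pathsresult} (the special case of the main result of~\cite{cooley2020longest}), and one further application of the standard sprinkling Lemma~\ref{lem:standardsprinkling}. The two rightmost inequalities require no work: $L_P+1\le\bigl(\tfrac{2\eps^2}{(r-1)^2}+O(\eps^3)\bigr)n$ \whp\ is exactly the assertion of Corollary~\ref{cor:shortlyafterphasetransition}, and $L_C\le L_P+1$ holds deterministically, since deleting one edge from a longest loose cycle of length $L_C$ leaves a loose path of length $L_C-1$. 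So it remains only to prove that \whp\ $L_C\ge\bigl(\tfrac{\eps^2}{4(r-1)^2}+O(\eps^3)\bigr)n$.

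To do so I would split the randomness in the usual way: write $1-p=(1-p_1)(1-p_2)$ with $p_2$ chosen small enough that $p_1$ still corresponds to edge density $\tfrac{1+\eps'}{(r-1)\binom{n-1}{r-1}}$ with $\eps'=\eps-O(\eps^3)$ --- for instance $p_2=\Theta(\eps^3 p)$, which keeps $p_2$ of order $n^{-(r-1)}$ since $\eps$ is constant. The union of independent copies of $H^r(n,p_1)$ and $H^r(n,p_2)$ has exactly the distribution of $\cH$, so we may apply Theorem~\ref{thm:pathsresult} to $H^r(n,p_1)$ and obtain \whp\ a loose path $P$ of length
\[
\ell\ \ge\ \Bigl(\tfrac{(\eps')^2}{4(r-1)^2}+O((\eps')^3)\Bigr)n\ =\ \Bigl(\tfrac{\eps^2}{4(r-1)^2}+O(\eps^3)\Bigr)n,
\]
using $(\eps')^2=\eps^2+O(\eps^4)$ and $(\eps')^3=\eps^3+O(\eps^5)$.

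Next I would close $P$ into a loose cycle using the edges of $H^r(n,p_2)$, which are independent of $P$; this is the role of Lemma~\ref{lem:standardsprinkling}. Since $P$ spans only $\ell(r-1)+1=o(n)$ vertices, at least $(1-o(1))n$ vertices lie outside $P$ and are available, together with the fresh edges, to build a short loose path through new vertices joining (rotated versions of) the two ends of $P$. The key quantitative point is that such connecting structures exist \whp: even though a direct connection between the two \emph{fixed} endpoints of $P$ has expected count tending to $0$, working with $\Omega(\ell)=\Omega(\eps^2 n)$ alternative endpoints on each side makes the expected number of connecting structures of order a fixed positive power of $\eps$ times $n$, which diverges. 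This produces a loose cycle of length $(1-o(1))\ell$ (in fact one can even gain a little), so together with the previous display we get \whp\ $L_C\ge(1-o(1))\ell\ge\bigl(\tfrac{\eps^2}{4(r-1)^2}+O(\eps^3)\bigr)n$, which closes the argument.

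The main obstacle is this last step --- the sprinkling closure --- and within it, arranging the two competing requirements on $p_2$: it must be small enough that replacing $\eps$ by $\eps'=\eps-O(\eps^3)$ in Theorem~\ref{thm:pathsresult} only perturbs the leading coefficient by an $O(\eps^3)$ term, yet large enough that the connecting loose path through fresh vertices is found \whp. Both are taken care of by (the proof of) Lemma~\ref{lem:standardsprinkling}; in the write-up it therefore suffices to verify that its hypotheses hold for the present choice of $p_1$ and $p_2$, and then to combine the resulting bounds with Corollary~\ref{cor:shortlyafterphasetransition} and the trivial inequality $L_C\le L_P+1$ to read off the full statement.
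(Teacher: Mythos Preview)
Your proposal is correct and follows essentially the same route as the paper: the two rightmost inequalities come directly from Corollary~\ref{cor:shortlyafterphasetransition} (together with the trivial $L_C\le L_P+1$), and the leftmost inequality is obtained by applying Theorem~\ref{thm:pathsresult} at a slightly smaller probability and then invoking the sprinkling Lemma~\ref{lem:standardsprinkling} (via Corollary~\ref{cor:sprinkling}) to upgrade the path lower bound to a cycle lower bound. The only difference is cosmetic: the paper takes $p_1=(1-1/\log n)p$ rather than your $p_2=\Theta(\eps^3 p)$, which works equally well because $\eps$ is constant and any $o(1)$ perturbation of $\eps$ is eventually dominated by the $O(\eps^3)$ error term.

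One small slip worth correcting in your write-up: you assert that the long path $P$ spans $\ell(r-1)+1=o(n)$ vertices, but with $\ell=\Theta(\eps^2 n)$ and $\eps$ constant this is $\Theta(n)$, not $o(n)$. This does not damage the argument, since Lemma~\ref{lem:standardsprinkling} makes no such assumption (its proof uses initial and terminal segments of $P$ itself, not vertices outside $P$, to find the closing edge); but your informal description of the sprinkling step should be adjusted accordingly, or simply replaced by a direct check of the hypotheses of Lemma~\ref{lem:standardsprinkling}.
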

Theorem~\ref{thm:bestknownresultcycles} provides the best known upper and lower bounds on $L_P,L_C$ in the regime when
$p= \frac{1+\eps}{(r-1)\binom{n-1}{r-1}}$, 
but there is a multiplicative factor of~$8$ between these two bounds and the correct asymptotic value is still unknown.
Indeed it is not even clear that the random variables $L_P,L_C$ are concentrated around a single value.

We note that even for cycles in  $G(n,p)$ in the barely supercritical regime the correct asymptotic value is not known
despite considerable efforts in this direction. In particular, the best known bounds
when $0<\eps=\eps(n)=o(1)$ satisfies $\eps^3n\to\infty$ and $p=\frac{1+\eps}{n}$
are due to {\L}uczak~\cite{Luczak91} (lower bound) and Kemkes and Wormald~\cite{KemkesWormald13} (upper bound),
and state that \whp\ the length $L_C$ of the longest cycle satisfies
\[
\left(\frac{4}{3}+o(1)\right)\eps^2n\leq L_C\leq(1.7395+o(1))\eps^2n.
\]

The proofs of all the results of this section appear in Section~\ref{sec:proofofmainresults} as a consequence of a single, more general result (Theorem~\ref{thm:factor:reducedcoredegs}).

\subsection{Key proof techniques}
In order to prove our main results, we transfer the problem from $\cH$ to the \emph{factor graph} $G\coloneqq G(\cH)$
which will be formally defined in Section~\ref{sec:factorgraphs}. In the factor graph we define the \emph{reduced core} $R_G$,
which is closely related to the $2$-core of $G$ and from which we can reconstruct the loose core of $\cH$, but which is easier to analyse. We use a peeling process (Definition~\ref{def:peeling}) and an auxiliary algorithm called $\coredetectortwo$ to determine the asymptotic proportion of variable and factor nodes of $G$ with fixed degree in the reduced core (Theorem~\ref{thm:factor:reducedcoredegs}). We also need martingale techniques, in particular an Azuma-Hoeffding inequality and an associated vertex-exposure martingale to show concentration of the numbers of vertices and edges of fixed degree around the respective expectations. 
\subsection{Paper overview}
The rest of the paper is structured as follows. 

In Section~\ref{sec:prelim} we set basic notation and state some standard probabilistic lemmas which we will use later.
In Section~\ref{sec:factorgraphs} we switch our focus to factor graphs,
define the reduced core and state Theorem~\ref{thm:factor:reducedcoredegs}
which describes degree distributions in the reduced core and which
implies all of our main results, as we prove in Section~\ref{sec:proofofmainresults}.

Subsequently, Section~\ref{sec:peeling} describes a standard peeling process to obtain the reduced core and contains two main lemmas
which together imply Theorem~\ref{thm:factor:reducedcoredegs}.
The first of these (Lemma~\ref{lem:mainlemma1}) describes the degree distribution after a sufficiently large number of steps of the peeling process,
and will be proved in Section~\ref{sec:algorithm}.
The second main lemma (Lemma~\ref{lem:mainlemma2}) states that subsequently, very few further vertices will be deleted in the remainder of the peeling process,
and therefore this degree distribution is also a good approximation for the degree distribution in the reduced core.
Lemma~\ref{lem:mainlemma2} will be proved in Section~\ref{sec:prooflowerbound}.

In Section~\ref{sec:concluding}, we conclude with some discussion and open questions.
We omit from the main body of the paper many proofs which simply involve technical calculations
or standard applications of common methods,
but include them in the appendices for completeness.
Appendix~\ref{app:analysisfixedpoint} contains an analysis of the fixed-point equation~\eqref{eq:fixedpointequation},
while Appendix~\ref{app:problemmas} contains the proofs of some basic probabilistic lemmas which are needed throughout the paper.
Finally, Appendix~\ref{app:eventE} and Appendix~\ref{sec:proofofuniformity} constitute the proofs of Lemma~\ref{lem:eventE}
and Lemma~\ref{claim:uniformity}, respectively.

\section{Preliminaries and Notation}\label{sec:prelim}

For the rest of the paper, $r\in\NN_{\geq 3}$ and $d>0$ will be fixed.
In particular, we consider these to be constant, so if we say, for example,
that $x = O(n)$, we mean that there exists a constant $C=C(r,d)$
such that $x \le Cn$.
By the notation $x=a\pm b$ we mean that $a-b\leq x\leq a+b$.
Similarly, the notation $x = (a\pm b)c$ means that $(a-b)c \le x \le (a+b)c$.
We will omit floors and ceilings whenever these do not significantly affect the argument. \\

As mentioned in Section~\ref{def:basicdefinitions}, the solution set of the fixed-point equation~\eqref{eq:fixedpointequation}
changes its behaviour at $d=d^*$. More precisely we have the following. 

\begin{claim}\label{claim:behaviouroffixedpointsol}\mbox{ }
\begin{enumerate}[label=\textnormal{\textbf{(F\arabic*)}}]
\item If $d<d^*$, then $\intvarsurvlim=0$.
\item If $d>d^*$, then there is a unique positive solution to~\eqref{eq:fixedpointequation}.
\end{enumerate}
\end{claim}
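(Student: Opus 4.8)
The plan is to study the auxiliary function $g(x) \coloneqq F(1-x) - (1-x)$ on $[0,1]$, whose zeros are exactly the solutions $\rho$ of~\eqref{eq:fixedpointequation} lying in $[0,1]$ (one first checks that all solutions lie in this range, since $F$ maps $[0,1]$ into $(0,1]$). Writing $y = 1-x \in [0,1]$, this is equivalent to analysing $h(y) \coloneqq F(y) - y = \exp(-d(1-y^{r-1})) - y$, and the solutions $\rho$ correspond to $y = 1-\rho$. Note $h(1) = 0$ always, corresponding to the trivial solution $\rho = 0$, and we seek to understand the other zeros of $h$ in $[0,1)$, the largest of which (equivalently, the zero $y$ closest to, but strictly less than, $1$, or $y=1$ itself if there is no other) determines $\intvarsurvlim$.

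First I would record the qualitative features of $h$: we have $h(0) = e^{-d} > 0$, $h(1) = 0$, and $h'(y) = d(r-1)y^{r-2}\exp(-d(1-y^{r-1})) - 1$, so $h'(1) = d(r-1) - 1 = d(r-1)(1 - d^*/ \cdot)$; more precisely $h'(1) > 0$ iff $d > d^* = \frac{1}{r-1}$ and $h'(1) < 0$ iff $d < d^*$. For part \textbf{(F1)} ($d < d^*$): since $h'(1) < 0$ and $h(1) = 0$, we have $h(y) > 0$ for $y$ slightly less than $1$; I would then argue that $h$ has no zero in $[0,1)$ at all, so that $y = 1$ is the only solution and $\intvarsurvlim = 0$. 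This requires showing $h > 0$ on $[0,1)$. One clean way: show that $h$ is strictly concave where it could vanish, or more robustly, bound $h'(y) \le d(r-1)y^{r-2} - 1$ (using $\exp(-d(1-y^{r-1})) \le 1$) and observe that when $d(r-1) \le 1$ this is negative for all $y \in [0,1)$ except possibly near $1$; actually $d(r-1)y^{r-2} < d(r-1) \le 1$ for $y < 1$, so $h' < 0$ throughout $[0,1)$, hence $h$ is strictly decreasing on $[0,1]$, and since $h(1) = 0$ we get $h(y) > 0$ on $[0,1)$. That settles \textbf{(F1)}.

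For part \textbf{(F2)} ($d > d^*$): now $h'(1) > 0$, so $h(y) < 0$ for $y$ slightly below $1$; combined with $h(0) = e^{-d} > 0$ and continuity, the intermediate value theorem yields at least one zero in $(0,1)$, hence a positive solution $\rho \in (0,1)$ to~\eqref{eq:fixedpointequation}. The substantive part is \emph{uniqueness} of the positive solution. The main obstacle is ruling out multiple interior zeros of $h$. The plan here is a convexity/monotonicity argument on the derivative: compute $h'(y)$ and analyse the sign changes of $h'$. Writing $h'(y) + 1 = d(r-1)y^{r-2}\exp(-d(1-y^{r-1}))$, take logs: $\log(h'(y)+1) = \log(d(r-1)) + (r-2)\log y - d(1-y^{r-1})$, whose derivative in $y$ is $\frac{r-2}{y} + d(r-1)y^{r-2} > 0$ on $(0,1)$. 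Hence $h'(y)+1$ is strictly increasing, so $h'$ is strictly increasing on $(0,1)$, meaning $h$ is strictly convex on $(0,1)$. A strictly convex function on an interval has at most two zeros; since $y=1$ is always a zero and (for $d > d^*$) $h$ is decreasing into it from the left while $h(0) > 0$, there is exactly one further zero in $(0,1)$. This gives the unique positive solution and completes \textbf{(F2)}. Throughout, I would note that the boundary case $d = d^*$ is deliberately excluded from the claim, so no analysis of $h'(1) = 0$ is needed. The whole argument is elementary calculus; the only point requiring care is organising the convexity computation cleanly, which is why I expect uniqueness in \textbf{(F2)} to be the crux.
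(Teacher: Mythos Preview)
Your proposal is correct and follows the same structural outline as the paper's proof: a monotonicity argument for \textbf{(F1)}, and the intermediate value theorem combined with a convexity/concavity argument for existence and uniqueness in \textbf{(F2)}. The one difference is that the paper first substitutes $x=1-\rho$ and takes logarithms, working with $f(x)=\log x + d(1-x^{r-1})$ rather than your $h(y)=\exp(-d(1-y^{r-1}))-y$; this makes the concavity check ($f''(x)=-x^{-2}-d(r-1)(r-2)x^{r-3}<0$) immediate, and uniqueness then follows by a double application of the mean value theorem. Your route instead establishes strict convexity of $h$ via the log-derivative trick on $h'+1$, which is slightly less direct but perfectly valid. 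The paper's substitution buys a one-line second-derivative computation; your approach avoids the change of variables entirely. Either way the argument is elementary calculus, and your identification of uniqueness in \textbf{(F2)} as the crux matches the paper's emphasis.
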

We defer the (rather technical) proof of this claim to Appendix~\ref{app:analysisfixedpoint}.

Furthermore we will often use the following alternative relation between $\intvarsurvlim$ and $\intfactsurvlim$.
\begin{equation}\label{eq:relatingrhoswithexpfunction}
1-\intvarsurvlim\numeq{\eqref{eq:fixedpointequation}}F(1-\intvarsurvlim)=\exp\left(-d\left(1-(1-\intvarsurvlim)^ {r-1}\right)\right)\numeq{\eqref{eq:relatingtherhos}}\exp(-d\intfactsurvlim).
\end{equation}

\subsection{Large deviation bounds}

In this section, we collect some standard large deviation results which will be needed later. We will use the following Chernoff bound
(see e.g.~\cite[Theorem 2.1]{JansonLuczakRucinskiBook}).
\begin{lemma}[Chernoff]\label{lem:chernoffbounds}
	If $X\sim \Bi(N,p)$, then for any $s> 0$
	\begin{equation*} \label{eqn:chernoffstd}
	\mathbb{P}(|X-Np|\geq s)\leq 2\cdot\exp\left(-\frac{s^2}{2\left(Np+\frac{s}{3}\right)}\right).
	\end{equation*}
	
\end{lemma}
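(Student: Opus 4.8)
This is a completely standard statement, so the plan is simply to derive it from the exponential moment method. First I would recall the standard one-sided Chernoff-type inequalities for a binomial $X \sim \Bi(N,p)$ with mean $\mu = Np$: for $s > 0$,
\[
\Pr(X \ge \mu + s) \le \exp\left(-\frac{s^2}{2(\mu + s/3)}\right)
\qquad\text{and}\qquad
\Pr(X \le \mu - s) \le \exp\left(-\frac{s^2}{2\mu}\right) \le \exp\left(-\frac{s^2}{2(\mu + s/3)}\right).
\]
The union of the two tail events gives $\Pr(|X - \mu| \ge s) \le 2\exp\left(-\frac{s^2}{2(\mu + s/3)}\right)$, which is exactly the claimed bound.

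The key steps for each one-sided bound are the usual ones. For the upper tail, apply Markov's inequality to $e^{tX}$ for $t>0$: since $X$ is a sum of $N$ independent Bernoulli$(p)$ variables, $\expec[e^{tX}] = (1 - p + pe^t)^N \le \exp(\mu(e^t - 1))$, using $1 + x \le e^x$. Hence $\Pr(X \ge \mu + s) \le \exp(\mu(e^t-1) - t(\mu + s))$. Optimising over $t$ (taking $t = \log(1 + s/\mu)$) yields $\Pr(X \ge \mu + s) \le \exp\left(-\mu\, h(s/\mu)\right)$ where $h(x) = (1+x)\log(1+x) - x$, and then the elementary inequality $h(x) \ge \frac{x^2}{2(1 + x/3)}$ for $x \ge 0$ converts this into the stated form with $s^2/(2(\mu + s/3))$ in the exponent. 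For the lower tail one argues symmetrically with $e^{-tX}$, obtaining $\Pr(X \le \mu - s) \le \exp(-\mu\, h(-s/\mu))$ with $h(-x) = (1-x)\log(1-x) + x \ge x^2/2$, which gives the even stronger bound $\exp(-s^2/(2\mu))$; since $\mu \le \mu + s/3$, this is dominated by the two-sided expression.

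There is essentially no obstacle here: the only mildly non-trivial points are the two scalar inequalities $(1+x)\log(1+x) - x \ge \frac{x^2}{2(1+x/3)}$ for $x \ge 0$ and $(1-x)\log(1-x) + x \ge \frac{x^2}{2}$ for $0 \le x < 1$, both of which follow from comparing Taylor expansions or differentiating. Since this is a textbook result, in the paper itself I would not reproduce the derivation at all but simply cite \cite[Theorem 2.1]{JansonLuczakRucinskiBook}, as the excerpt already does; the sketch above is only the standard argument one would write out if a self-contained proof were required.
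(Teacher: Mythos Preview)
Your proposal is correct and matches the paper's approach exactly: the paper does not give a proof but simply cites \cite[Theorem~2.1]{JansonLuczakRucinskiBook}, noting that the upper and lower tail bounds have been combined for simplicity, which is precisely what you suggest doing. The self-contained derivation you sketch via the exponential moment method is the standard one and is entirely sound.
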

This bound is less precise than the form in~\cite{JansonLuczakRucinskiBook} since we have combined the upper and lower tail bounds for simplicity.
We will also need an Azuma-Hoeffding inequality---the following lemma is taken from~\cite{JansonLuczakRucinskiBook}, Theorem $2.25$.
\begin{lemma}[Azuma-Hoeffding]\label{lem:Azuma}
Let $X$ be a real-valued random variable.
If $(X_i)_{1\leq i\leq n}$ is a martingale with $X_n=X$ and $X_0=\expec(X)$,
and for every $1\leq i\leq n$ there exists a constant $c_i>0$ such that
\[|X_i-X_{i-1}|\leq c_i\]then, for any $s>0$,
\begin{align}
    \prob(|X-\expec(X)|\geq s) & \leq 2\cdot\exp\left(-\frac{s^2}{2\sum_{i=1}^nc_i^2}\right).\nonumber
\end{align}
\end{lemma}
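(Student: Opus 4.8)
The statement to be proved is the Azuma--Hoeffding inequality (Lemma~\ref{lem:Azuma}), and the plan is to run the classical exponential-moment (Chernoff-type) argument. First I would pass to the martingale difference sequence $D_i\coloneqq X_i-X_{i-1}$ for $1\le i\le n$, so that $X-\expec(X)=X_n-X_0=\sum_{i=1}^n D_i$. Writing $\mathcal{F}_i$ for the filtration with respect to which $(X_i)$ is a martingale (e.g.\ the natural one generated by $X_0,\dots,X_i$), the martingale property gives $\expec(D_i\mid\mathcal{F}_{i-1})=0$, while the hypothesis gives $|D_i|\le c_i$ almost surely. The goal is then reduced to bounding the moment generating function $\expec\!\big(e^{t(X-\expec X)}\big)$ for $t>0$ and feeding the result into Markov's inequality.

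The key ingredient is Hoeffding's lemma: if $Y$ is a random variable with $\expec(Y)=0$ and $|Y|\le c$, then $\expec(e^{tY})\le\exp(t^2c^2/2)$ for all $t\in\RR$. I would prove this by convexity: for $y\in[-c,c]$, writing $y$ as the convex combination $\tfrac{c+y}{2c}\,c+\tfrac{c-y}{2c}\,(-c)$ of the endpoints and using convexity of $y\mapsto e^{ty}$ yields $e^{ty}\le\tfrac{c+y}{2c}e^{tc}+\tfrac{c-y}{2c}e^{-tc}$; taking expectations and using $\expec(Y)=0$ gives $\expec(e^{tY})\le\cosh(tc)\le\exp(t^2c^2/2)$, the last step being a termwise comparison of the Taylor series of $\cosh$ and of $x\mapsto e^{x^2/2}$ (using $(2k)!\ge 2^k k!$). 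The same reasoning applied conditionally gives $\expec(e^{tD_i}\mid\mathcal{F}_{i-1})\le\exp(t^2c_i^2/2)$ for each $i$.

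Finally I would peel off the differences one at a time by conditioning on $\mathcal{F}_{n-1},\mathcal{F}_{n-2},\dots$:
\[
\expec\!\big(e^{t(X-\expec X)}\big)=\expec\!\left(e^{t\sum_{i=1}^{n-1}D_i}\,\expec\!\big(e^{tD_n}\mid\mathcal{F}_{n-1}\big)\right)\le\exp\!\Big(\tfrac{t^2c_n^2}{2}\Big)\,\expec\!\left(e^{t\sum_{i=1}^{n-1}D_i}\right),
\]
and iterating yields $\expec\!\big(e^{t(X-\expec X)}\big)\le\exp\!\big(\tfrac{t^2}{2}\sum_{i=1}^n c_i^2\big)$. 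By Markov's inequality, $\prob(X-\expec X\ge s)\le e^{-ts}\exp\!\big(\tfrac{t^2}{2}\sum_{i=1}^n c_i^2\big)$, and choosing $t=s/\sum_{i=1}^n c_i^2$ gives $\prob(X-\expec X\ge s)\le\exp\!\big(-s^2/(2\sum_{i=1}^n c_i^2)\big)$. Running the identical argument for the martingale $(-X_i)$ controls the lower tail $\prob(X-\expec X\le -s)$, and a union bound produces the factor $2$ in the claimed inequality. The only step that is not pure bookkeeping with conditional expectations is Hoeffding's lemma, so that convexity estimate is where I would expect to spend the effort; everything else is routine. (We also note that, as indicated in the excerpt, this inequality is standard and is quoted from~\cite{JansonLuczakRucinskiBook}.)
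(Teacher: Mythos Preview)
Your argument is the standard and correct proof of the Azuma--Hoeffding inequality. Note, however, that the paper does not actually prove this lemma: it is simply quoted from~\cite{JansonLuczakRucinskiBook}, Theorem~2.25, with no proof given, so there is nothing to compare against beyond observing that your exponential-moment argument via Hoeffding's lemma is exactly the classical derivation one would find in that reference.
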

Again, the bound in~\cite{JansonLuczakRucinskiBook} is a bit stronger than what we state here since for simplicity we have combined upper and lower tail bounds.

\section{Factor graphs}\label{sec:factorgraphs}

There is a natural representation of a hypergraph as a bipartite graph known as a \emph{factor graph},
which is a well-known concept in literature (see e.g.~\cite{MezardMontanariBook}).

\begin{definition}[Factor graph]
Given a hypergraph $H$, the \emph{factor graph} $G=G(H)$ of $H$ is a bipartite graph on vertex classes $\cV\coloneqq V(H)$ and $\cF\coloneqq E(H)$,
where $v \in \cV$ and $a \in \cF$ are joined by an edge in $G$ if and only if $v \in a$. In other words, the vertices of $G$ are the vertices and edges of $H$,
and the edges of $G$ represent incidences.

To avoid confusion, we refer to the vertices of a factor graph as \emph{nodes}. In particular, the nodes in $\cV$ are called \emph{variable nodes}
and the nodes in $\cF$ are called \emph{factor nodes}.\footnote{In some contexts in the literature,
factor nodes may be called \emph{functional nodes} or \emph{constraint nodes}.} We define
$$
\randfact\coloneqq G(\cH),
$$
i.e.\ the factor graph of the $r$-uniform binomial random hypergraph $H^r(n,p)$.
\end{definition}

Note that if $H$ is an $r$-uniform hypergraph, then the factor nodes of $G(H)$ all have degree~$r$.
We will need the following basic fact about the number of factor nodes in $\randfact$. The proof is a simple application of a Chernoff bound,
and appears in Appendix~\ref{app:numberfactornodes} for completeness.

\begin{proposition}\label{prop:numberfactornodes}
Let $d>0$ be a constant and let $p=\frac{(1+o(1))d}{\binom{n}{r-1}}$. Then there exists a function $\omega_0=\omega_0(n)$ with
$\omega_0\xrightarrow{n\to\infty}\infty$ such that \whp\ the number $m$ of factor nodes
in $\randfact$ satisfies
\[
m=\left(1\pm \frac{1}{\omega_0}\right)\frac{dn}{r}.
\]
\end{proposition}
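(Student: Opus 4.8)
The plan is to prove Proposition~\ref{prop:numberfactornodes} by a direct application of the Chernoff bound from Lemma~\ref{lem:chernoffbounds}. The number $m$ of factor nodes in $\randfact$ is precisely the number of edges in $\cH$, which is a binomial random variable $m\sim\Bi\!\left(\binom{n}{r},p\right)$, since each of the $\binom{n}{r}$ potential edges is present independently with probability $p$. First I would compute the expectation: using $p=\frac{(1+o(1))d}{\binom{n}{r-1}}$ together with the identity $\binom{n}{r}=\frac{n}{r}\binom{n-1}{r-1}$ (and noting $\binom{n}{r-1}=(1+o(1))\binom{n-1}{r-1}$ as $n\to\infty$ with $r$ fixed), we get $\EE(m)=\binom{n}{r}p=(1+o(1))\frac{dn}{r}$.

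Next I would apply the Chernoff bound. Write $N=\binom{n}{r}$ and let $\mu=Np=\EE(m)=\Theta(n)$. For a parameter to be chosen, say $s=s(n)$, Lemma~\ref{lem:chernoffbounds} gives
\[
\Pr\bigl(|m-\mu|\geq s\bigr)\leq 2\exp\!\left(-\frac{s^2}{2\left(\mu+\frac{s}{3}\right)}\right).
\]
Taking $s=\mu/\omega_1$ for a slowly growing function $\omega_1=\omega_1(n)\to\infty$ chosen so that $\omega_1=o\bigl(n^{1/3}\bigr)$ (for instance $\omega_1=\log n$), the exponent is $-\Theta\!\left(\frac{\mu^2/\omega_1^2}{\mu}\right)=-\Theta\!\left(\frac{\mu}{\omega_1^2}\right)=-\Theta\!\left(\frac{n}{\omega_1^2}\right)\to-\infty$, so this probability tends to $0$. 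Hence \whp\ $m=(1\pm\frac{1}{\omega_1})\mu=(1\pm\frac{1}{\omega_1})(1+o(1))\frac{dn}{r}$.

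Finally I would tidy up the two error terms into a single one. We have $m=(1\pm\frac{1}{\omega_1})(1+o(1))\frac{dn}{r}$; absorbing the $(1+o(1))$ factor (coming from the hypothesis on $p$ and from $\binom{n}{r-1}$ versus $\binom{n-1}{r-1}$) into a slightly larger error term, one can pick $\omega_0=\omega_0(n)\to\infty$ growing slowly enough that $m=(1\pm\frac{1}{\omega_0})\frac{dn}{r}$ \whp, as claimed. Concretely, if the hypothesis gives $p=\frac{(1+\delta_n)d}{\binom{n-1}{r-1}}$ with $\delta_n\to0$, set $\omega_0=\min\{\omega_1,\,|\delta_n|^{-1/2}\}$ (interpreting this as $\omega_1$ when $\delta_n=0$). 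There is no real obstacle here; the only mild subtlety is bookkeeping the multiple $o(1)$ and $(1\pm\cdot)$ terms so that they collapse to one function $\omega_0\to\infty$, which is routine.
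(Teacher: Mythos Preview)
Your proposal is correct and follows essentially the same approach as the paper: both compute the expectation $\binom{n}{r}p=(1+o(1))\frac{dn}{r}$, apply the Chernoff bound of Lemma~\ref{lem:chernoffbounds} to concentrate $m$ around its mean, and then absorb the two error sources (the $o(1)$ in the expectation and the Chernoff deviation) into a single function $\omega_0\to\infty$. The only cosmetic difference is the choice of deviation parameter (the paper takes $s=n^{3/4}$ whereas you take $s=\mu/\omega_1$), which is immaterial.
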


It will be more convenient to study the factor graph than the original hypergraph---in order to do this,
we need to understand what the structure corresponding to the loose core looks like in the factor graph.
We first define the loose core
of a factor graph and subsequently observe that it does indeed correspond to the loose core of the hypergraph (Definition~\ref{def:loosecore}).

\begin{definition}[Loose core]\label{def:loosecorefactorgraph}
The \emph{loose core} $C=\coreg$ of a factor graph $G$ is the maximal subgraph of $G$ such that each factor node of $C$
has degree $r$ in $C$ and furthermore:
\begin{enumerate}[label=\textnormal{\textbf{(C\arabic*')}}]
\item\label{item:isolatednodesfactorgraph} $C$ contains no isolated variable nodes;
\item\label{item:degreeconditionfactorgraphs} Each factor node in $C$ is adjacent to at least two variable nodes of degree at least two in $C$.
\end{enumerate}
\end{definition}

\begin{proposition}\label{prop:loosecorecorrespondence}
Given an $r$-uniform hypergraph $H$, the loose core $C_G$ of the factor graph $G=G(H)$ of $H$ is
identical to the factor graph of the loose core $C_H$ of $H$.
\end{proposition}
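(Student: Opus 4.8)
The plan is to prove the two hypergraphs are identical by showing that each is a subhypergraph of the other, exploiting the fact that the loose core (in both the hypergraph and the factor-graph sense) is the \emph{unique maximal} substructure with the relevant properties.

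First I would set up a dictionary between the two worlds. Given a subhypergraph $H'$ of $H$, its factor graph $G(H')$ is a subgraph of $G(H)$ in which every factor node has degree $r$ (since $H'$ is $r$-uniform); conversely, given a subgraph $G'$ of $G(H)$ in which every factor node has degree $r$, there is an associated subhypergraph $H'$ of $H$ whose vertices are the variable nodes of $G'$ together with all variable nodes incident to a factor node of $G'$, and whose edges are the factor nodes of $G'$. These two operations are mutually inverse on the relevant classes of objects. The key observation is that under this correspondence the hypergraph degree $d_{H'}(v)$ equals the factor-graph degree of the variable node $v$ in $G(H')$; hence condition~\ref{item:loosecorefirstcond} ($\delta(H')\ge 1$, no isolated vertices) matches~\ref{item:isolatednodesfactorgraph} (no isolated variable nodes), and the connection number $\kappa_{H'}(e)$ is exactly the number of degree-$\ge 2$ variable nodes adjacent to the factor node $e$, so $\kappa(H')\ge 2$ matches~\ref{item:degreeconditionfactorgraphs}.

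With this dictionary in hand, the proof splits into two inclusions. For one direction, let $C_H$ be the loose core of $H$; then $G(C_H)$ is a subgraph of $G(H)$, all its factor nodes have degree $r$, and by the dictionary it satisfies~\ref{item:isolatednodesfactorgraph} and~\ref{item:degreeconditionfactorgraphs}, so by maximality $G(C_H)\subseteq C_G$. For the other direction, let $C_G$ be the loose core of the factor graph $G=G(H)$; since every factor node of $C_G$ has degree $r$, the dictionary produces a subhypergraph $H^\star$ of $H$ with $G(H^\star)=C_G$, and the dictionary again translates~\ref{item:isolatednodesfactorgraph},~\ref{item:degreeconditionfactorgraphs} into~\ref{item:loosecorefirstcond},~\ref{item:loosecoresecondcond}, so by maximality of $C_H$ we get $H^\star\subseteq C_H$, hence $C_G=G(H^\star)\subseteq G(C_H)$. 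Combining the two inclusions gives $C_G=G(C_H)$, which is the claim.

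The only genuinely delicate point is bookkeeping about which variable nodes belong to $C_G$: a priori $C_G$ might contain a variable node that is isolated, but~\ref{item:isolatednodesfactorgraph} forbids exactly that, so every variable node of $C_G$ lies in some factor node of $C_G$ and the passage back to a hypergraph is unambiguous and loses no vertices. I would also want to check the degenerate case where the loose core is empty: if $C_H$ is empty then $G(C_H)$ is the empty graph, and if $C_G$ were nonempty the dictionary would yield a nonempty $H^\star$ with properties~\ref{item:loosecorefirstcond},~\ref{item:loosecoresecondcond}, contradicting emptiness of $C_H$; and symmetrically. So the main (modest) obstacle is simply being careful that the correspondence between ``subgraphs of $G(H)$ with all factor nodes of degree $r$'' and ``subhypergraphs of $H$'' is genuinely a bijection that preserves the defining conditions — once that is nailed down, the maximality argument is immediate.
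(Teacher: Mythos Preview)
Your proposal is correct and follows essentially the same approach as the paper: set up the correspondence between subhypergraphs of $H$ and subgraphs of $G(H)$ in which every factor node has degree $r$, observe that conditions~\ref{item:loosecorefirstcond},~\ref{item:loosecoresecondcond} translate exactly into~\ref{item:isolatednodesfactorgraph},~\ref{item:degreeconditionfactorgraphs}, and conclude that the maximal objects coincide. The paper's proof is considerably terser---it simply asserts the correspondence of conditions and leaves the maximality argument implicit---whereas you spell out the two inclusions and the degenerate empty case explicitly, but there is no substantive difference in strategy.
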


\begin{proof}
The condition that each factor node of $C=\coreg$ has degree $r$ in $C$
means that $C$ corresponds to a subhypergraph of $H$ (i.e.\ no edge of
$H$ has a vertex removed from it without itself being removed).
Since variable nodes of $G$ correspond to vertices of $H$, condition \ref{item:isolatednodesfactorgraph} in Definition~\ref{def:loosecorefactorgraph}
corresponds precisely to \ref{item:loosecorefirstcond} in Definition~\ref{def:loosecore}.
Furthermore, condition~\ref{item:degreeconditionfactorgraphs} in Definition~\ref{def:loosecorefactorgraph} is directly analogous
to condition~\ref{item:loosecoresecondcond} in Definition~\ref{def:loosecore}.
\end{proof}

In view of Proposition~\ref{prop:loosecorecorrespondence},
rather than studying the loose core of the hypergraph, we can study the loose core of the corresponding factor graph instead.
In fact, even more convenient than this is a slightly different structure.

\begin{definition}[Reduced core]\label{def:reducedcore}
The \emph{reduced core} $R=\corer$ of a factor graph $G$ is the maximal subgraph of $G$
with no nodes of degree~$1$.
\end{definition}

Note that the reduced core is very similar to the $2$-core of $G$---the only difference
is that we do not delete isolated nodes, so all original nodes are still present. This will be convenient
since it means that all nodes have a well-defined degree within the reduced core
(and have degree zero if and only if they are not in the $2$-core of $G$).
Similarly we will want to describe degree distributions within the loose core,
but also incorporating nodes which are in fact not contained in the loose core.
To avoid confusion and abuse of terminology, we define the \emph{padded core}.

\begin{definition}[\Paddedcore]\label{def:paddedcore}
The \emph{\paddedcore} $P=\pc$ of a factor graph $G$ is the subgraph of $G$ whose
nodes are the nodes of $G$ and whose edges are the edges of $\coreg$.
\end{definition}
In other words, the \paddedcore\ $\pc$ is identical to the loose core $\coreg$ except that
all nodes of $G$ are still present. Equivalently, $\pc$ is the maximal subgraph of $G$
in which each non-isolated factor node has degree~$r$ and is adjacent to at least two
variable nodes of degree at least~$2$. The following observation motivates both our definition
of the \paddedcore\ and the interpretation of $\mu_0$ as the proportion of vertices
of $H^r(n,p)$ which do not lie in the loose core.

\begin{remark}\label{rem:paddedcoredegs}
For each $j\in \NN$,
the proportion of variable nodes of $G=G^r(n,p)$ which have degree $j$ in the \paddedcore\ $\pc$ is $\mu_j$.
\end{remark}

It is important to observe that, if we have found the reduced core, it is very easy to reconstruct the \paddedcore,
and hence also the loose core. 
Let $\cF_R$ be the set of non-isolated factor nodes of the reduced core $\corer$ and let $\pcprime$
be the factor graph whose nodes are the nodes of $G$ and whose edges are all edges of $G$ incident to $\cF_R$.
In other words, $\pcprime$ is the factor graph obtained from $\corer$ by adding back in all
edges of $G$ attached to non-isolated factor nodes of $\corer$.

\begin{proposition}\label{prop:reducedtoloose}
$\pcprime = \pc$.
\end{proposition}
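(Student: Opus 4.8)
The plan is to show the two factor graphs $\pcprime$ and $\pc$ have the same edge set (they have the same node set by definition), by proving each is a subgraph of the other. Recall $\pc$ has the edges of the loose core $\coreg$, and $\pcprime$ has all edges of $G$ incident to non-isolated factor nodes of the reduced core $\corer$. The key structural fact I would isolate first is a \emph{correspondence between the loose core and the reduced core}: a factor node lies in $\coreg$ (as a non-isolated node) if and only if it lies in $\corer$ as a non-isolated node, and moreover for such a factor node, its incident edges in $\coreg$ are exactly all $r$ of its incident edges in $G$. The ``only if'' direction of this is easy, since $\coreg$ has no degree-$1$ nodes (a variable node of degree $1$ in $\coreg$ would violate \ref{item:degreeconditionfactorgraphs} for its unique neighbouring factor node, or \ref{item:isolatednodesfactorgraph} after we delete it — more carefully, $\coreg$ being a subgraph with all non-isolated factor nodes of degree $r$ and each such factor node adjacent to $\ge 2$ variable nodes of degree $\ge 2$ certainly has minimum degree $\ge 1$ among non-isolated nodes, hence its non-isolated part is a subgraph of $G$ with no degree-$1$ node, hence contained in the $2$-core of $G$, hence in $\corer$).

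For the reverse containment I would argue as follows. Let $H'$ be the subgraph of $G$ whose edges are all edges of $G$ incident to $\cF_R$ (the non-isolated factor nodes of $\corer$), together with, say, all nodes of $G$; equivalently $H'=\pcprime$. First, every factor node of $H'$ that is non-isolated has degree exactly $r$ in $H'$, by construction. Second, I claim each non-isolated factor node $a\in\cF_R$ is, in $H'$, adjacent to at least two variable nodes of degree $\ge 2$ in $H'$: indeed $a$ has degree $r\ge 3$ in $\corer$, hence at least... wait, $a$ could have degree less than $r$ in $\corer$ — but $a\in\cF_R$ means $a$ has degree $\ge 1$ in $\corer$; since $\corer$ has no degree-$1$ node, $a$ has degree $\ge 2$ in $\corer$, so $a$ has at least two neighbours $v_1,v_2$ in $\corer$, and each $v_i$ has degree $\ge 2$ in $\corer$ (again no degree-$1$ nodes), hence degree $\ge 2$ in $H'\supseteq \corer$ (as subgraphs of $G$, since $H'$ contains all of $\corer$'s edges: every edge of $\corer$ is incident to a factor node which is non-isolated in $\corer$, i.e.\ in $\cF_R$). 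So the non-isolated part of $H'$ satisfies \ref{item:isolatednodesfactorgraph}, \ref{item:degreeconditionfactorgraphs} and the degree-$r$ condition, hence is a subgraph witnessing membership in the loose core; by maximality of $\coreg$ it is contained in $\coreg$, giving $E(\pcprime)\subseteq E(\coreg)=E(\pc)$.

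It remains to check the opposite inclusion $E(\pc)\subseteq E(\pcprime)$: every edge $e$ of $\coreg$ is incident to a factor node $a$ which is non-isolated in $\coreg$ (since $e$ is an edge of $\coreg$), and we showed above that such $a$ then lies in $\corer$ and is non-isolated there, i.e.\ $a\in\cF_R$; hence $e$ is an edge of $G$ incident to $\cF_R$, so $e\in E(\pcprime)$. Combining the two inclusions with the fact that both graphs have node set $V(G)$ yields $\pcprime=\pc$. The only mildly delicate point — and the place I would be most careful — is the maximality argument: one must verify that the non-isolated part of $\pcprime$, when re-regarded with its isolated nodes, genuinely satisfies Definition~\ref{def:loosecorefactorgraph} (in particular that adding the ``missing'' edges back to each factor node in $\cF_R$ does not create a factor node of degree other than $r$ and does not destroy the ``two variable neighbours of degree $\ge 2$'' property), so that it is legitimately a competitor for the maximal subgraph defining $\coreg$. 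Everything else is bookkeeping about subgraph containment in $G$.
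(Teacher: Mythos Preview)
Your overall strategy matches the paper's: show each of $\pc$ and $\pcprime$ is contained in the other, using maximality of the loose core for $E(\pcprime)\subseteq E(\pc)$ and the fact that non-isolated factor nodes of $\coreg$ lie in $\corer$ for $E(\pc)\subseteq E(\pcprime)$. The second paragraph (the $E(\pcprime)\subseteq E(\pc)$ direction) is correct and is essentially the paper's argument that $P_2\subseteq P_1$.

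However, there is a genuine error in your justification of the key structural fact in the first paragraph. You assert that ``$\coreg$ has no degree-$1$ nodes'' and then that ``its non-isolated part is a subgraph of $G$ with no degree-$1$ node''. This is false: the loose core may contain variable nodes of degree~$1$ (indeed the paper notes this explicitly just after Definition~\ref{def:loosecore}), and since $\coreg$ has no isolated nodes at all, its ``non-isolated part'' is all of $\coreg$. So your chain ``$\coreg\subseteq\text{$2$-core}\subseteq\corer$'' breaks down. The conclusion you need---that every non-isolated factor node of $\coreg$ is non-isolated in $\corer$---is nevertheless true, and the fix is exactly what the paper does: let $R_1$ be obtained from $\coreg$ (equivalently $\pc$) by deleting all edges incident to leaves. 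Then every remaining variable node has degree $\ge 2$, and every factor node still has degree $\ge 2$ because by~\ref{item:degreeconditionfactorgraphs} it had at least two neighbours of degree $\ge 2$ in $\coreg$, which are retained. Hence $R_1$ has no degree-$1$ nodes, so $R_1\subseteq\corer$, and since the factor nodes of $\coreg$ all survive in $R_1$ with degree $\ge 2$, they lie in $\cF_R$. With this correction your argument goes through and coincides with the paper's.
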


\begin{proof}
Let $R_1$ denote the graph obtained from the padded core $\pc$ of $G$ by removing
all edges incident to leaves (which must be variable nodes).
Note that, since any non-isolated factor node in $\pc$ has at least two neighbours of
degree at least two, the same is still true in $R_1$.
For the sake of intuitive notation, we also denote
$$
R_2 \coloneqq \corer, \qquad
P_1 \coloneqq \pc, \qquad P_2 \coloneqq \pcprime.
$$
Our goal is to show that $P_1=P_2$.

Let us observe that  $P_1$ can be obtained from $R_1$ by the same operation
with which $P_2$ is obtained from $R_2$, namely by adding in
edges of $G$ incident to non-isolated factor nodes.

We next observe that $R_1$ is a subgraph of $G$ with no nodes of degree $1$,
and therefore $R_1 \subseteq R_2=\corer$, by the maximality of $\corer$.
Since the operation constructing $P_i$ from $R_i$ is inclusion-preserving, $R_1 \subseteq R_2$ implies that $P_1 \subseteq P_2$. 

It therefore remains to prove that $P_2 \subseteq P_1$. 
To this end, we observe that certainly $P_2=\pcprime$ is a subgraph of $G$ in which each non-isolated factor node is
in the $2$-core of $G$, and therefore
adjacent to at least two variable nodes of degree at least two. Furthermore each non-isolated
factor node of $P_2$ has degree $r$ in $C_2$, and since
$P_1=\pc$ is the maximal subgraph with these two properties, we have $P_2 \subseteq P_1$, as required.
\end{proof}

Let us observe one further fact about the transformation from the reduced core $\corer$ to the padded core $\pc=\pcprime$:
although this seemed to be dependent on the initial factor graph $G$, in fact the operation
simply involves connecting non-isolated factor nodes of $\corer$ to (distinct) isolated variable nodes until each factor node has degree precisely~$r$.
This means that given $\corer$, by Proposition~\ref{prop:reducedtoloose} we can describe $\pc$ (and therefore also the loose core $\coreg$)
entirely, up to the assignment of which
nodes are leaves. In other words, $\corer$ already contains all of the ``essential'' information
of both $\pc$ and $\coreg$. It will therefore be enough to study $\corer$ rather than $\pc$ or $\coreg$, and this turns out to be simpler.

Now the main results of this paper are implied by the following theorem about the reduced core $\corer$
of the factor graph $G= G^r(n,p)$ of the $r$-uniform binomial random hypergraph.

For a non-negative real number $\lambda$,
let us denote by $\widetilde \Po(\lambda)$ the distribution of a random variable $X$ satisfying
$$
\Pr(X=j) = \begin{cases}
\Pr(\Po(\lambda)\le 1) & \mbox{if }j=0, \\
0 & \mbox{if }j=1,\\
\Pr(\Po(\lambda)=j) & \mbox{if } j \ge 2.
\end{cases}
$$
In other words, the $\widetilde\Po$ distribution is identical to the $\Po$ distribution except
that values of $1$ are replaced by $0$. We define the $\widetilde\Bi$ distribution analogously.

\begin{theorem}\label{thm:factor:reducedcoredegs}
Let $r,d,p,\intvarsurvlim,\intfactsurvlim$ be as in Section~\ref{def:basicdefinitions} and let $G= G^r(n,p)$, i.e.\ the factor graph of $\cH$.
For each $j\in\NN$, let $\varprop{j}$ and $\factprop{j}$ be the proportion
of variable nodes and factor nodes of $G$ respectively which have degree $j$ in the reduced core $\corer$ of $G$.
Then there exists a function $\error = \error(n) =o(1)$
such that
\whp\ for any constant $j \in \NN$ we have
$$
\varprop{j} = \Pr(\widetilde\Po(d\intfactsurvlim)=j)\pm\error
$$
and
$$
\factprop{j} = \Pr(\widetilde\Bi(r,\intvarsurvlim)=j)\pm\error.
$$
\end{theorem}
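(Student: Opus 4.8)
The plan is to analyse the peeling process that produces the reduced core $\corer$ and track, throughout the process, the empirical degree distribution of the surviving variable and factor nodes. The underlying heuristic is a branching-process / differential-equations analysis of the kind pioneered by Pittel, Spencer and Wormald and adapted by Molloy and others: because $G = G^r(n,p)$ is locally tree-like (factor nodes have degree exactly $r$, variable-node degrees are asymptotically $\Po(d)$, and short cycles are rare), the peeling process should behave like the corresponding process on a Galton--Watson tree. In the tree model, a variable node survives iff at least two of its incident factor-edges are ``good'', where a factor-edge out of a variable node is good iff the factor node on the other end has at least one \emph{other} good incident variable-edge; a factor node survives iff at least two of its $r$ incident variable nodes survive (and then it keeps all $r$ of its edges, since no variable node is deleted unless forced). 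Unwinding this recursion, the probability $\rho$ that a variable node survives satisfies a fixed-point equation whose positive solution is exactly $\intvarsurvlim$: a variable node with $\Po(d)$ factor-neighbours survives iff $\ge 2$ of those neighbours are ``available'', each available independently with probability $\intfactsurvlim = 1-(1-\intvarsurvlim)^{r-1}$ (the probability that at least one of the $r-1$ other slots of that factor node survives), and the self-consistency of this gives $1-\rho = F(1-\rho)$ as in~\eqref{eq:fixedpointequation}. Conditioned on surviving, the number of surviving factor-neighbours of a variable node is $\Po(d\intfactsurvlim)$ conditioned to be $\ge 2$, i.e.\ $\widetilde\Po(d\intfactsurvlim)$; similarly the number of surviving variable-neighbours of a surviving factor node is $\Bi(r,\intvarsurvlim)$ conditioned $\ge 2$, i.e.\ $\widetilde\Bi(r,\intvarsurvlim)$. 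Multiplying by the survival probability and noting that a non-surviving node has degree $0$ in $\corer$, one gets exactly the claimed unconditional distributions $\widetilde\Po(d\intfactsurvlim)$ and $\widetilde\Bi(r,\intvarsurvlim)$ (the value-$0$ mass absorbing both the genuine degree-$0$ survivors and all non-survivors, which is why $\widetilde\Po$ collapses $1\mapsto 0$).

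To turn this heuristic into a proof, I would proceed in the two-lemma structure already announced in the paper. First (Lemma~\ref{lem:mainlemma1}), run the peeling process for a large but bounded number $I$ of rounds and show that after $I$ rounds the empirical degree distributions of variable and factor nodes are within $o(1)$ of $\Pr(\widetilde\Po_{\le I}=j)$ and $\Pr(\widetilde\Bi_{\le I}=j)$ for appropriate truncated versions of the limiting distributions — this is where the auxiliary algorithm $\coredetectortwo$ comes in: it exposes the relevant local neighbourhoods round by round, and a Chernoff/first-moment argument (Lemma~\ref{lem:chernoffbounds}) controls the number of nodes whose $I$-neighbourhood is not a tree or is atypically large, so that the empirical distribution concentrates around its tree-model expectation. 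Since $I$ can be chosen so that the truncated distributions are within $\eps/2$ of the true $\widetilde\Po(d\intfactsurvlim)$ and $\widetilde\Bi(r,\intvarsurvlim)$, this handles the bulk. Second (Lemma~\ref{lem:mainlemma2}), show that from round $I$ onward only $o(n)$ further nodes are ever deleted, so that the round-$I$ degree distribution is still accurate (up to $o(1)$) for the genuine reduced core $\corer$; this is the usual ``the peeling process essentially stops'' statement and is proved by a supermartingale / expansion argument showing that once the density of low-degree nodes is small it stays small. Concentration of the per-degree counts around their expectations is obtained via the Azuma--Hoeffding inequality (Lemma~\ref{lem:Azuma}) applied to a vertex-exposure martingale, exposing the $n$ variable nodes one at a time; exposing one variable node changes any count by at most $O(1)$ in expectation-robust fashion, giving bounded differences $c_i = O(1)$ and hence fluctuations $O(\sqrt{n\log n}) = o(n)$, which is enough since we only need constant-$j$ precision up to an additive $o(1)$ proportion.

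The main obstacle, and the technically heaviest part, is making the local-tree-like approximation rigorous \emph{through the peeling process} rather than just for the static graph: the set of surviving nodes after round $k$ is a complicated function of the graph, and one must argue that $\coredetectortwo$'s round-by-round exposure keeps enough independence that the distribution of a node's status after $k$ rounds genuinely matches the $k$-generation truncation of the branching process — in particular that conditioning on survival does not secretly correlate the subtrees hanging off a node. I expect this to require a careful coupling between the exposure process on $G^r(n,p)$ and the Galton--Watson tree, together with a union bound over the $o(n)$ ``bad'' nodes whose bounded-depth neighbourhood fails to be a tree or contains too many edges; the Chernoff bound of Lemma~\ref{lem:chernoffbounds} supplies the needed tail estimates on neighbourhood sizes. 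A secondary but real difficulty is verifying that the fixed-point $\intvarsurvlim$ is actually \emph{attracting} from the relevant starting point, i.e.\ that the finite-$I$ truncated survival probabilities converge to $\intvarsurvlim$ (and not to the trivial fixed point $0$) as $I\to\infty$ — this is a monotonicity/convexity analysis of $F$ that is essentially the content of Claim~\ref{claim:behaviouroffixedpointsol}, deferred to Appendix~\ref{app:analysisfixedpoint}, and one has to be careful in the supercritical regime that one is iterating from above the basin boundary. Once these two points are in hand, assembling Lemma~\ref{lem:mainlemma1} and Lemma~\ref{lem:mainlemma2} with the Azuma--Hoeffding concentration yields Theorem~\ref{thm:factor:reducedcoredegs} with the stated $\error = \error(n) = o(1)$.
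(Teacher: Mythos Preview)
Your proposal follows essentially the same route as the paper: the two-lemma split into Lemma~\ref{lem:mainlemma1} (degree distribution after $\ell$ rounds via local tree coupling and \coredetectortwo) and Lemma~\ref{lem:mainlemma2} (few further deletions), glued together with an Azuma--Hoeffding concentration step, is exactly how the paper proceeds.

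Two technical points in your sketch would not survive as written. First, the Azuma--Hoeffding step does not work with bounded differences $c_i = O(1)$: exposing a single variable node can alter the degree-$j$ count after $\ell$ rounds of peeling by as much as the size of that node's $(\ell{+}1)$-neighbourhood, which is unbounded in $G^r(n,p)$. The paper handles this by first conditioning on the high-probability event that the maximum degree is at most $\log n$ (Definition~\ref{def:graphclass} and Claim~\ref{claim:maxdeg}), obtaining Lipschitz constant $2(\log n)^{\ell+1}/n$ for the proportions, which still suffices provided $\ell = o(\log\log n)$. Second, for Lemma~\ref{lem:mainlemma2} the paper does not run a direct supermartingale or expansion argument on the peeling process itself; instead it proves a uniformity claim (Claim~\ref{claim:uniformity}: $G_\ell$ is uniformly random among $r$-plain factor graphs with its degree sequence), switches to the configuration model, and couples the cascade of remaining deletions with a subcritical branching process. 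Your ``once the density of degree-$1$ nodes is small it stays small'' intuition is correct, but making it rigorous requires some such device to recover usable randomness after the first $\ell$ rounds have already been revealed.

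A minor slip in your heuristic: $\intvarsurvlim$ is not the probability that a variable node \emph{survives} into the core (which is $\Pr(\Po(d\intfactsurvlim)\ge 2)=\gamma$) but the probability that a directed message out of a variable node is ``good'', i.e.\ $\Pr(\Po(d\intfactsurvlim)\ge 1)$. This is why the fixed-point equation reads $1-\rho = \exp(-d\intfactsurvlim)$ rather than involving a $\ge 2$ condition; it does not affect your argument but is worth keeping straight.
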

In other words, in terms of their degrees in $\corer$, variable nodes and factor nodes
have degree distributions which are asymptotically those of a $\widetilde \Po(d \intfactsurvlim)$
and a $\widetilde \Bi(r,\intvarsurvlim)$ distribution respectively.

The proof of this theorem will form the main body of the paper. In Section~\ref{sec:peeling} we will prove how Theorem~\ref{thm:factor:reducedcoredegs} follows from two auxiliary statements,
stating that for some large integer $\ell$
the proportions of variable and factor nodes of degree $j$ in the graph obtained after $\ell$ rounds of a peeling process
are approximately the values given in Theorem~\ref{thm:factor:reducedcoredegs}
(Lemma~\ref{lem:mainlemma1}),
and furthermore not many nodes are deleted after round $\ell$ (Lemma~\ref{lem:mainlemma2}). 

\section{Back to hypergraphs: Proofs of main results}\label{sec:proofofmainresults}
\noindent We now show how all of the results of Section~\ref{sec:mainresults} follow from Theorem~\ref{thm:factor:reducedcoredegs}. First we deduce our result on the asymptotic degree distribution of vertices in the loose core of $\cH$.
\begin{proof}[Proof of Theorem~\ref{thm:mainresultdegrees}]
We will apply Theorem~\ref{thm:factor:reducedcoredegs} to provide us with a function $\eps$,
and we will prove Theorem~\ref{thm:mainresultdegrees} with $\eps'\coloneqq\sqrt{\eps} + \frac{1}{\sqrt{\omega_0}}$, where
$\omega_0=\omega_0(n)$ is the function given by 
Proposition~\ref{prop:numberfactornodes}.

For convenience, for any $j\in\NN$, let us define
$$
\mu_j' \coloneqq \begin{cases}
\Pr(\Po(d\intfactsurvlim)=j) & \mbox{if } j \ge 2;\\
\oldbeta \cdot \Pr(\Po(d\intfactsurvlim)=j) & \mbox{if } j=1;\\
\Pr(\Po(d\intfactsurvlim)=0) + (1-\oldbeta)\cdot \Pr(\Po(d\intfactsurvlim)=1) & \mbox{if } j=0.
\end{cases}
$$
In other words, $\mu_j'$ is the ``idealised version'' of $\mu_j$,
and our goal is simply to prove that whp, for each $j\in\NN$ we have $\mu_j = \mu_j' \pm \eps$.
Similarly we also define
$$
\varpropp{j} \coloneqq \Pr(\widetilde{\Po}(d\intfactsurvlim)=j),
$$
so by Theorem~\ref{thm:factor:reducedcoredegs} we have $\varprop{j} = \varpropp{j} \pm \eps$
whp for each $j\in\NN$. The proof of Theorem~\ref{thm:mainresultdegrees}
now simply consists of relating the $\mu_j$ to the $\varprop{j}$,
relating the $\mu_j'$ to the $\varpropp{j}$ and applying Theorem~\ref{thm:factor:reducedcoredegs}.
Note that it follows instantly from the definitions
that $\mu_j' = \varpropp{j}$ for $j\in\NN_{\geq 2}$. We will split the proof into three cases.

\vspace{0.2cm}
\noindent \textbf{Case 1: $j\ge 2$.}\\
We start by showing that $\mu_j=\varprop{j}$. Observe that by Remark~\ref{rem:paddedcoredegs},
$\mu_j$ is simply
the proportion of variable nodes with degree $j$ in the \paddedcore\ $\pc$ of $G=\randfact$.
Theorem~\ref{thm:factor:reducedcoredegs} tells us the degrees of variable and factor
nodes in the reduced core $\corer$ of $G$. By Proposition~\ref{prop:reducedtoloose},
moving from $\corer$ to $\pc$ means that 
we connect all non-isolated factor nodes of $\corer$ to their original neighbours in $G$,
and any variable nodes which receive additional incident edges in this process have their degrees changed from~$0$ to~$1$.
It follows that for $j\ge 2$, the proportion $\mu_j$
of variable nodes in $G$ with degree $j$ in the \paddedcore\ $P_G$ is precisely equal to $\varprop{j}$,
the proportion of variable nodes in $G$ with degree $j$ in the reduced core $\corer$.
Therefore 
\[
\mu_j=\varprop{j}\numeq{\text{Th.}~\ref{thm:factor:reducedcoredegs}}\varpropp{j} \pm \eps=\mu_j'\pm \eps,
\]
and the statement of Theorem~\ref{thm:mainresultdegrees} is certainly true for $j\ge 2$ (indeed, we have proved something stronger since $\eps<\eps'$).

\vspace{0.2cm}
\noindent \textbf{Case 2: $j=1$.}\\
To prove the case $j=1$, we need to check how many isolated variable nodes become leaves when moving from $\corer$ to $\pc$.
Since by Proposition~\ref{prop:reducedtoloose} every factor node of $\corer$ with degree $j\ge 2$ has $r-j$ leaves connected to it, and since whp
the number $m$ of factor nodes in total is $m=\left(1\pm\frac{1}{\omega_0}\right)\frac{dn}{r}$ for some growing function
$\omega_0\xrightarrow{n\to\infty}\infty$ by Proposition~\ref{prop:numberfactornodes}, whp
the number of leaves added, which is simply $\mu_1 n$, satisfies
\begin{align}\label{eq:leaves1}
\mu_1 n=\sum_{j=2}^r (r-j) \factprop{j}  m
& = \left(1\pm\frac{1}{\omega_0}\right)\frac{dn}{r} \sum_{j=2}^r (r-j)\left(\Pr(\widetilde\Bi(r,\intvarsurvlim)=j)\pm \eps\right)\nonumber \\
& = \frac{dn}{r} \sum_{j=2}^r (r-j)\Pr(\widetilde\Bi(r,\intvarsurvlim)=j) \pm \frac{\eps'n}{2},
\end{align} 
where the last line follows since $\frac{1}{\omega_0},\eps = o\left(\sqrt{\eps}+\frac{1}{\sqrt{\omega_0}}\right)=o(\eps')$.
The sum can be estimated using the definition of the $\widetilde\Bi$ distribution
and equations~\eqref{eq:relatingtherhos} and~\eqref{eq:relatingrhoswithexpfunction}:
\begin{align*}
&\sum\nolimits_{j=2}^r (r-j)\Pr(\widetilde\Bi(r,\intvarsurvlim)=j) \\
& \hspace{2cm} \numeq{\phantom{\eqref{eq:relatingtherhos},\eqref{eq:relatingrhoswithexpfunction}}} \sum\nolimits_{j=0}^r (r-j)\Pr(\Bi(r,\intvarsurvlim)=j)  - r(1-\intvarsurvlim)^r - (r-1)r\intvarsurvlim(1-\intvarsurvlim)^{r-1}\\
& \hspace{2cm} \numeq{\phantom{\eqref{eq:relatingtherhos},\eqref{eq:relatingrhoswithexpfunction}}} r (1-\intvarsurvlim)\left( 1 - (1-\intvarsurvlim)^{r-1} - (r-1)\intvarsurvlim(1-\intvarsurvlim)^{r-2} \right)\\
& \hspace{2cm} \numeq{\eqref{eq:relatingtherhos},\eqref{eq:relatingrhoswithexpfunction}} r \exp(-d\intfactsurvlim)\left(\intfactsurvlim - (r-1)\intvarsurvlim(1-\intvarsurvlim)^{r-2} \right).
\end{align*}
Substituting this into~\eqref{eq:leaves1} gives
\begin{align}\label{eq:leaves2}
\mu_1
&  = d\exp(-d\intfactsurvlim) \left(\intfactsurvlim - (r-1)\intvarsurvlim(1-\intvarsurvlim)^{r-2}\right) \pm \eps'/2.
\end{align}
On the other hand, we have
\begin{align*}
\mu_1' = \oldbeta \cdot\Pr(\Po(d\intfactsurvlim)=1) & = \left(1- \frac{(r-1)\intvarsurvlim (1-\intvarsurvlim)^{r-2}}{\intfactsurvlim}\right) d\intfactsurvlim \exp(-d\intfactsurvlim)\\
& = d\exp(-d\intfactsurvlim)\left(\intfactsurvlim - (r-1)\intvarsurvlim (1-\intvarsurvlim)^{r-2}\right),
\end{align*}
which combined with~\eqref{eq:leaves2} tells us that
\begin{equation}\label{eq:deg1proportion}
\mu_1 = \mu_1' \pm \eps'/2,
\end{equation}
which is in fact slightly stronger than required.

\vspace{0.2cm}
\noindent \textbf{Case 3: $j=0$.}\\
Finally to prove the statement for $j=0$, note that
$\mu_0 = \varprop{0} - \mu_1$ (deterministically).
Furthermore, we have $\sum_{j=0}^\infty \mu_j' = \sum_{j=0}^\infty \varpropp{j} =1$,
and we have already observed that $\mu_j'=\varpropp{j}$ if $j\ge 2$,
and therefore $\mu_0' + \mu_1' = \varpropp{0} + \varpropp{1}$.
Observing also that $\varpropp{1}=0$, we deduce that
$\mu_0' = \varpropp{0} - \mu_1'$.
Therefore, applying Theorem~\ref{thm:factor:reducedcoredegs} (for $j=0$) and~\eqref{eq:deg1proportion},
we obtain
$$
\mu_0 = \varprop{0}-\mu_1 = \varpropp{0} \pm \eps - (\mu_1'\pm \eps'/2) = \mu_0'\pm \eps'
$$
as required.
\end{proof}

With a little more calculation we can also determine
the number of vertices and edges in the loose core, and therefore also prove Theorem~\ref{thm:mainresultorder}.

\begin{proof}[Proof of Theorem~\ref{thm:mainresultorder}]
Observe that the number of vertices in the loose core of $H=H^r(n,p)$ is simply the number of variable nodes
of $G=G(H)$ which have degree at least one in the \paddedcore\ of $G$,
and thus the proportion of such vertices is $1-\mu_0$ (see Remark~\ref{rem:paddedcoredegs}). By Theorem~\ref{thm:mainresultdegrees}, \whp
\begin{align*}
1-\mu_0 & \numeq{\phantom{\eqref{eq:relatingtherhos},\eqref{eq:eta},\eqref{eq:relatingrhoswithexpfunction}}} 1-\exp(-d\intfactsurvlim) - d\intfactsurvlim\exp(-d\intfactsurvlim)(1-\oldbeta)+o(1) \\
& \numeq{\eqref{eq:relatingtherhos},\eqref{eq:eta},\eqref{eq:relatingrhoswithexpfunction}}\intvarsurvlim - d(1-\intvarsurvlim)(r-1) \intvarsurvlim(1-\intvarsurvlim)^{r-2}+o(1)\\
& \numeq{\phantom{\eqref{eq:relatingtherhos},\eqref{eq:eta},\eqref{eq:relatingrhoswithexpfunction}}}\intvarsurvlim (1 - d(r-1)(1-\intvarsurvlim)^{r-1})+o(1)=\propnumvs+o(1),
\end{align*}
precisely as stated in Theorem~\ref{thm:mainresultorder}.

The number of edges in the loose core of $H$ is the number of factor nodes with degree at least $1$ in $\corer$,
which is $(1-\factprop{0})m$, where recall that $m$ denotes the total number of factor nodes of $G$.
Applying Theorem~\ref{thm:factor:reducedcoredegs} to estimate $\factprop{0}$
and Proposition~\ref{prop:numberfactornodes} to estimate $m$,
we deduce that
\whp\ the number of edges in the loose core is
\begin{align*}
\left(1-\factprop{0}\right)m
&=\left(1-(1-\intvarsurvlim)^r-r\intvarsurvlim(1-\intvarsurvlim)^{r-1}\pm o(1)\right)\frac{(1+o(1))dn}{r}
\numeq{\eqref{eq:definitionofeta}}\left(\oldeta+o(1)\right)n,
\end{align*} 
as claimed.
\end{proof}

Now we can also prove the bound on the length of the longest loose cycle in Theorem~\ref{thm:mainresultcycle}.

\begin{proof}[Proof of Theorem~\ref{thm:mainresultcycle}]
Let us observe that for any loose cycle in $H=H^r(n,p)$, the edges and the vertices which lie in two edges form
a cycle in the factor graph $G=G(H)$, which must clearly lie within the reduced core $\corer$ of $G$. Thus the
length of the loose cycle is bounded both by the number of variable nodes and the number of factor nodes
which are not isolated
in $\corer$. In other words, the length $L_C$ of the longest loose cycle (deterministically) satisfies
\begin{equation}\label{eq:cyclelengthmin}
L_C\le \min\left\{(1-\varprop{0})n \; , \; (1-\factprop{0}) m\right\}.
\end{equation}
By Proposition~\ref{prop:numberfactornodes} we have that \whp\ $m=\left(1+o(1)\right)\frac{dn}{r}$.
Observe also that by Theorem~\ref{thm:factor:reducedcoredegs}, \whp\ $\varprop{0}$ is asymptotically
\begin{align*}
\Pr(\widetilde{\Po}(d\intfactsurvlim)=0) = \prob(\Po(d\intfactsurvlim)\leq 1) =  \exp(-d\intfactsurvlim)(1+d\intfactsurvlim)=1-\gamma,
\end{align*}
while whp $\factprop{0}$ is asymptotically
\begin{align*}
\Pr(\widetilde{\Bi}(r,\intvarsurvlim)=0) = \prob(\Bi(r,\intvarsurvlim)\leq 1) = (1-\intvarsurvlim)^r + r\intvarsurvlim(1-\intvarsurvlim)^{r-1}=1-\frac{\oldeta r}{d}.
\end{align*}
Substituting these values into~\eqref{eq:cyclelengthmin} gives the bound in Theorem~\ref{thm:mainresultcycle}.
\end{proof}

Our next goal is to prove the remaining results of Section~\ref{sec:mainresults},
for which we will need to relate $L_P$ and $L_C$. To do this, we use a standard sprinkling argument.

\begin{lemma}\label{lem:standardsprinkling}
Let $\omega = \omega(n)$ be any function and
$p_1=p_1(n)$ and $p_2=p_2(n)$ be any probabilities satisfying
\begin{enumerate}
\item $p_1 \le \left(1+\frac{1}{\omega}\right)p_1 \le p_2;$
\item $p_1 n^r /\omega \to \infty$.
\end{enumerate}
Suppose that $H_1 \sim H^r(n,p_1)$ and $H_2 \sim H^r(n,p_2)$ are coupled in such a way that $H_1 \subset H_2$.
For $i=1,2$, let $L_P^{(i)},L_C^{(i)}$ denote the length of the longest loose path and loose cycle, respectively, in $H_i$. Then \whp\ \[L_C^{(2)} \ge L_P^{(1)}+o(n).\]
\end{lemma}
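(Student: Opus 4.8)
The plan is to take a longest loose path $P$ in $H_1$, with $L_P^{(1)} = \ell$ edges, and to close it up into a loose cycle in $H_2$ by using the ``extra'' random edges present in $H_2$ but not in $H_1$. Write $P = v_1 v_2 \cdots v_{\ell(r-1)+1}$ with edges $e_1,\dots,e_\ell$ as in Definition~\ref{def:loosecycle}. The two endpoints of the path (as a loose structure) are $v_1$ and $v_{\ell(r-1)+1}$. If we can find a short loose path in $H_2$ joining $v_1$ to $v_{\ell(r-1)+1}$ which is internally vertex-disjoint from $P$, then concatenating it with $P$ yields a loose cycle of length $\ell + o(n)$, giving the claimed bound $L_C^{(2)} \ge L_P^{(1)} + o(n)$.

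The first step is the standard coupling: condition on $H_1$, and let $H' \sim H^r(n,p')$ be an independent random hypergraph with $1-p_2 = (1-p_1)(1-p')$, so that $H_2 = H_1 \cup H'$ has the correct distribution and $H_1 \subseteq H_2$. From the hypotheses $p' \ge \frac{p_1/\omega}{1+1/\omega} = \Omega(p_1/\omega)$, so $p' n^r \to \infty$, meaning $H'$ alone has $\omega(1)$ edges; more usefully, for any fixed $r$-set the probability it is an edge of $H'$ is $p' = \omega(n^{-r})$. The second step is to expose $H'$ and greedily build a loose path in $H'$ from $v_1$: having reached a current endpoint $u$ using a bounded number of edges and touching only $o(n)$ vertices so far, the number of $(r-1)$-subsets of $[n]\setminus(\text{used vertices})$ containing a fixed new vertex, times $p'$, is $\Theta(n^{r-1} p') = \omega(n^{-1}) \cdot n = \omega(1)$ choices of ``next edge'' of $H'$ extending from $u$ through fresh vertices, in expectation, and a first-moment/Chernoff estimate shows whp we can always extend. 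We run this until the current endpoint set is large enough — after, say, $n^{1/2}$ steps we have $\Theta(n^{1/2})$ candidate endpoints reachable from $v_1$, all via internally fresh vertices. Symmetrically, grow a second such loose path in $H'$ from $v_{\ell(r-1)+1}$ using a disjoint batch of fresh vertices and a fresh portion of the randomness of $H'$, obtaining $\Theta(n^{1/2})$ endpoints reachable from $v_{\ell(r-1)+1}$. The third step is the ``sprinkling'' connection: between the $\Theta(n^{1/2})$ endpoints on one side and the $\Theta(n^{1/2})$ endpoints on the other there are $\Theta(n)$ pairs, and (after reserving yet more fresh randomness) each such pair can be joined by a single new loose edge of $H'$ through $r-2$ fresh vertices with probability $\Theta(n^{r-2} p') = \Theta(p') n^{r-2}$; the expected number of successful connecting edges is $\Theta(n^{1/2} \cdot n^{1/2} \cdot n^{r-2} p') = \Theta(n^r p') \to \infty$, so whp at least one connection exists, and a Chernoff bound (Lemma~\ref{lem:chernoffbounds}) over the independent trials makes this whp. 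Splicing $P$ together with the two short loose paths in $H'$ and this connecting edge produces a loose cycle in $H_2$ using all $\ell$ edges of $P$ plus only $O(n^{1/2}) = o(n)$ further edges, so $L_C^{(2)} \ge \ell + o(n)$.

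Two bookkeeping points must be handled carefully. First, the endpoints $v_1$ and $v_{\ell(r-1)+1}$ each already lie in one edge of $P$, so the first new loose edge attached at each endpoint must avoid re-using vertices of $P$ other than that endpoint; since $P$ has at most $n$ vertices this only removes $O(n)$ vertices from the roughly $\binom{n}{r-1}$ available and does not affect the count $\Theta(n^{r-1}p')$. Second, and this is the main obstacle, one must make the three exposure stages genuinely independent: partition the edge-slots of $H'$ (equivalently, partition $p'$ into three pieces $p'_1, p'_2, p'_3$ with $1-p' = \prod(1-p'_i)$, each still $\omega(n^{-r})$, or simply run all three stages as one greedy exploration revealing edges only as needed), so that the extension arguments in stages two and three are not conditioned on failed attempts elsewhere. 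Once the independence is set up cleanly, all the probability estimates are routine first-moment computations combined with Lemma~\ref{lem:chernoffbounds}. I expect the delicate part to be purely this organisation of the randomness and the verification that at every greedy step the set of ``forbidden'' vertices is only $o(n)$, rather than any single hard estimate.
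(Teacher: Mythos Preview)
Your approach has a genuine gap: the greedy path-growing in the sprinkled hypergraph $H'$ does not work, because $H'$ is far too sparse. You assert that the expected number of extensions from a given endpoint is $\Theta(n^{r-1}p')=\omega(1)$, but the hypothesis only gives $n^r p'\to\infty$, i.e.\ $n^{r-1}p'=\omega(n^{-1})$; your line ``$\omega(n^{-1})\cdot n=\omega(1)$'' is an arithmetic slip. In the regime the paper actually cares about, $p_1=\Theta\bigl(n^{-(r-1)}\bigr)$, so $p'=\Theta\bigl(n^{-(r-1)}/\omega\bigr)$ and $n^{r-1}p'=\Theta(1/\omega)=o(1)$: a fixed vertex whp has \emph{no} $H'$-edge at all, so you cannot take even one greedy step, let alone $n^{1/2}$. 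The same miscount recurs in your final sprinkling step: with $\Theta(n^{1/2})$ endpoints on each side and $r-2$ fresh middle vertices you have $\Theta(n^{r-1})$ candidate closing edges, not $\Theta(n^r)$, and $n^{r-1}p'$ need not tend to infinity.

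The paper's remedy is to dispense with growing anything in $H'$ and instead arrange that \emph{every} vertex of the single closing edge is drawn from a reservoir of size $\Theta(n)$. Concretely, one first observes that the statement is trivial unless $L_P^{(1)}\ge n/\omega'$ for some $\omega'\to\infty$ chosen so slowly that $p_1 n^r/((\omega')^r\omega)\to\infty$. One then chops the long path $P$ in $H_1$ into a short initial segment $V_1$, a segment $I_1$ just after it, and a final segment $I_2$, each of size $\Theta(n/\omega')$; the remaining middle portion $P_0$ still has length $L_P^{(1)}-o(n)$. Now consider all $r$-sets with one vertex in $I_1$, one in $I_2$, and $r-2$ in $V_1$: there are $\Theta\bigl((n/\omega')^r\bigr)$ of them, and each is an edge of $H'$ independently with probability $p'\ge p_1/\omega$, so whp at least one is present. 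That single edge closes the sub-path from $I_1$ through $P_0$ to $I_2$ into a loose cycle of length $L_P^{(1)}-o(n)$. The point is that the $r-2$ ``middle'' vertices of the closing edge come from the path itself rather than from a fresh pool, which is what turns the count of candidate edges from $\Theta(n^{r-1})$ into $\Theta(n^r)$.
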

We defer the proof of this lemma to Appendix~\ref{app:sprinkling}.
The following slightly different form will be a little more convenient to apply.
We omit the proof, which is elementary given Lemma~\ref{lem:standardsprinkling}. \pagebreak
\begin{corollary}\label{cor:sprinkling}
Given the setup of Lemma~\ref{lem:standardsprinkling}, the following hold.
\begin{enumerate}
\item If there exists a constant $\zeta_1$ such that \whp\ $L_P^{(1)} \ge (\zeta_1 +o(1))n$, then \whp\ $L_C^{(2)} \ge (\zeta_1 +o(1))n$.
\item If there exists a constant $\zeta_2$ such that \whp\ $L_C^{(2)} \le (\zeta_2 +o(1))n$, then \whp\ $L_P^{(1)} \le (\zeta_2 +o(1))n$.
\end{enumerate}
\end{corollary}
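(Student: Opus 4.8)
The plan is to derive Corollary~\ref{cor:sprinkling} directly from Lemma~\ref{lem:standardsprinkling} by a routine translation between the ``$+o(n)$'' additive error in the lemma and the ``$(\zeta+o(1))n$'' multiplicative-error form in the corollary, simply combining the two error terms. No new probabilistic input is needed: Lemma~\ref{lem:standardsprinkling} already provides, under the stated coupling $H_1\subset H_2$, a \whp\ inequality relating the extremal lengths in the two hypergraphs.

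For part~(1): suppose \whp\ $L_P^{(1)}\ge(\zeta_1+o(1))n$, i.e.\ there is a function $g=g(n)=o(1)$ with $L_P^{(1)}\ge(\zeta_1-g(n))n$ \whp. By Lemma~\ref{lem:standardsprinkling}, \whp\ $L_C^{(2)}\ge L_P^{(1)}+o(n)$, say $L_C^{(2)}\ge L_P^{(1)}-h(n)n$ \whp\ for some $h=h(n)=o(1)$ (absorbing the sign and treating the lemma's $o(n)$ term as a lower-order correction). Intersecting these two \whp\ events gives $L_C^{(2)}\ge(\zeta_1-g(n)-h(n))n$ \whp, and since $g+h=o(1)$ this is exactly $L_C^{(2)}\ge(\zeta_1+o(1))n$ \whp, as claimed. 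Part~(2) is the contrapositive packaging of the same inequality: if \whp\ $L_C^{(2)}\le(\zeta_2+o(1))n$, then combining with $L_C^{(2)}\ge L_P^{(1)}+o(n)$ \whp\ yields $L_P^{(1)}\le L_C^{(2)}+o(n)\le(\zeta_2+o(1))n$ \whp; again the two $o$-terms and the single $o(1)$-term merge into one $o(1)$.

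I would spell out only that the finitely many (here two) \whp\ events can be intersected without spoiling the \whp\ conclusion, and that sums of $o(1)$ (resp.\ $o(n)$) functions are again $o(1)$ (resp.\ $o(n)$) — both entirely standard. The only mild subtlety worth a sentence is bookkeeping the direction of the error in Lemma~\ref{lem:standardsprinkling}'s ``$+o(n)$'': in part~(1) we need a lower bound on $L_C^{(2)}$ and in part~(2) an upper bound on $L_P^{(1)}$, and in both cases the $o(n)$ term is harmless because it is dominated by $n$ times an $o(1)$ quantity. There is no real obstacle here; this is purely an exercise in reconciling two equivalent asymptotic notations, which is why the proof can reasonably be omitted from the main text as the paper does.
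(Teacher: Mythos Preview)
Your proposal is correct and is precisely the elementary derivation the paper alludes to when it writes ``We omit the proof, which is elementary given Lemma~\ref{lem:standardsprinkling}.'' The bookkeeping you describe---intersecting two \whp\ events and merging the $o(1)$ terms---is exactly what is needed, and there is nothing more to it.
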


We also need a further technical result which states that the parameters $\oldeta,\gamma$,
with which we bound $L_C$ in Theorem~\ref{thm:mainresultcycle}, are continuous in $p$
(except at the threshold $p=d^*/\binom{n-1}{r-1}$).
Let $r,d,p$ be as in Section~\ref{def:basicdefinitions}. 
Let $p'=(1+1/\omega)p$ for a function $\omega=\omega(n)\to\infty$ but $\omega=o(\log n)$.
The following lemma shows that if we replace $p$ by $p'$,
the parameters $\oldeta,\gamma$ remain essentially the same. The (technical) proof can be found in Appendix~\ref{app:analysisfixedpoint}.
\begin{lemma}\label{lem:continuity}
Let $\oldeta,\gamma$ be defined as in~\eqref{eq:definitionofeta} and~\eqref{eq:definitionofgamma},
and let $\oldeta',\gamma'$ be defined similarly but with $p'$ in place of $p$. If $d\neq d^*,$ then 
\[\min\{\oldeta',\gamma'\}=\min\{\oldeta,\gamma\}+o(1).\]
\end{lemma}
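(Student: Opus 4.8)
The strategy is to show that all the quantities appearing in the definitions of $\oldeta$ and $\gamma$ depend continuously on $d$, away from $d=d^*$, and then to observe that replacing $p$ by $p'=(1+1/\omega)p$ changes the associated limit constant $d$ only by a $(1+o(1))$ factor. Indeed, if $p=d_n/\binom{n-1}{r-1}$ with $d_n\to d$, then $p'$ corresponds to the sequence $d_n'\coloneqq(1+1/\omega)d_n$, which also satisfies $d_n'\to d$. Hence $\oldeta',\gamma'$ are computed from the \emph{same} limit $d$ as $\oldeta,\gamma$, and the two pairs of constants literally coincide. This is the cleanest route, but it relies on the fact that the definitions of $\oldeta,\gamma$ (and $\intvarsurvlim,\intfactsurvlim$) are stated purely in terms of the limit $d$, not of $d_n$; so the first step is to make this dependence explicit and confirm that $d_n\to d$ and $d_n'\to d$ give identical limiting objects.

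First I would record that $\intvarsurvlim=\intvarsurvlim(r,d)$, and hence $\intfactsurvlim$, $\oldeta$, and $\gamma$, are functions of $d$ alone (with $r$ fixed), via the fixed-point equation~\eqref{eq:fixedpointequation} and the explicit formulas~\eqref{eq:relatingtherhos}, \eqref{eq:definitionofeta}, \eqref{eq:definitionofgamma}. Then, since $\omega\to\infty$, the sequence $d_n'=(1+1/\omega)d_n$ satisfies $d_n'\to d$ as well. Plugging $d_n'$ into the role played by $d_n$ in Section~\ref{def:basicdefinitions} produces a limit $d'=d$, so the constants $\oldeta',\gamma'$ defined "similarly but with $p'$ in place of $p$" are by definition $\oldeta(r,d)$ and $\gamma(r,d)$, i.e.\ exactly $\oldeta$ and $\gamma$. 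Therefore $\min\{\oldeta',\gamma'\}=\min\{\oldeta,\gamma\}$ with no error term at all, which is stronger than the claimed $+o(1)$.

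If instead the intended reading is that $\oldeta,\gamma$ are functions of the actual probability $p$ (equivalently of $d_n$, not of its limit), then the argument needs the continuity of $d\mapsto\intvarsurvlim(r,d)$ at the particular value $d$. For $d<d^*$ this is immediate from \ref{claim:behaviouroffixedpointsol}(F1), since $\intvarsurvlim\equiv 0$ in a neighbourhood. For $d>d^*$ one uses \ref{claim:behaviouroffixedpointsol}(F2): the positive solution of $1-\rho=F_{r,d}(1-\rho)$ is the largest root of a function that depends smoothly (indeed analytically) on $(d,\rho)$, with nonvanishing derivative in $\rho$ at the largest root (this is exactly the kind of transversality one expects away from the threshold and can be extracted from the analysis in Appendix~\ref{app:analysisfixedpoint}), so the implicit function theorem gives $d\mapsto\intvarsurvlim(r,d)$ continuous there; continuity of $\intfactsurvlim,\oldeta,\gamma$ then follows from their explicit formulas. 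Since $|d_n-d_n'|=d_n/\omega=o(1)$, uniform continuity on a small closed interval around $d$ yields $\oldeta'=\oldeta+o(1)$ and $\gamma'=\gamma+o(1)$, hence $\min\{\oldeta',\gamma'\}=\min\{\oldeta,\gamma\}+o(1)$.

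\textbf{Main obstacle.} The only nontrivial point is the continuity of $d\mapsto\intvarsurvlim(r,d)$ when $d>d^*$: one must rule out a jump in the \emph{largest} solution as $d$ varies, i.e.\ show the positive root depends continuously on $d$ and does not suddenly merge with another branch or disappear. This is precisely where the hypothesis $d\neq d^*$ is used (at $d=d^*$ the positive branch bifurcates off $0$), and it is handled by the transversality of the fixed-point equation established in the analysis of~\eqref{eq:fixedpointequation} in Appendix~\ref{app:analysisfixedpoint}. Everything else is bookkeeping with the explicit formulas~\eqref{eq:relatingtherhos}, \eqref{eq:definitionofeta}, \eqref{eq:definitionofgamma}.
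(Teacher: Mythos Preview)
Your proposal is correct, and your first observation is sharper than what the paper does: under the literal definitions in Section~\ref{def:basicdefinitions}, $\intvarsurvlim,\intfactsurvlim,\oldeta,\gamma$ depend only on the limit $d=\lim d_n$, and since $d_n'=(1+1/\omega)d_n\to d$ as well, one indeed gets $\oldeta'=\oldeta$ and $\gamma'=\gamma$ exactly. This shortcut is valid and makes the lemma trivial.

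The paper instead takes your second reading, treating $\oldeta',\gamma'$ as evaluated at the perturbed parameter $d(1+1/\omega)$ and proving continuity of $d\mapsto\intvarsurvlim(r,d)$ directly. Its argument is the elementary analogue of your implicit-function-theorem step: working with $f(x)=\log x+d(1-x^{r-1})$ and $f_n(x)=\log x+d(1+1/\omega)(1-x^{r-1})$, Claim~\ref{claim:derivativebigbeforefixedpoint} supplies exactly the transversality you invoke, namely $f_n'(x)>\delta>0$ for all $x\le\invintvarsurvlim$; the mean value theorem then gives $|\invintvarsurvlim-\nu_n|\le f_n(\invintvarsurvlim)/\delta\to 0$, where $\nu_n$ is the smallest zero of $f_n$. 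The paper does not treat the subcritical case $d<d^*$ separately (where, as you note, $\intvarsurvlim\equiv 0$ on a neighbourhood by Claim~\ref{claim:behaviouroffixedpointsol} makes everything trivial). Both routes finish identically by observing that $\oldeta,\gamma$ are explicit continuous functions of $\intvarsurvlim,\intfactsurvlim$.
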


We can now bound the length of the longest loose path in $H^r(n,p)$.
\begin{proof}[Proof of Corollary~\ref{cor:upperboundlongestpath}]
Let us set $\omega = 1/(\log n)$, set $p_1=p$ and set $p_2 = \left(1+\frac{1}{\omega}\right)p_1$.
It is easy to check that these parameters satisfy the assumptions of Lemma~\ref{lem:standardsprinkling},
and therefore also of Corollary~\ref{cor:sprinkling}.
Theorem~\ref{thm:mainresultcycle} applied to $H_2 \sim H^r(n,p_2)$ implies that \whp\ $L_C^{(2)} \le (\min\{\oldeta_2,\gamma_2\} + o(1))n$,
where $\oldeta_2,\gamma_2$ are defined analogously to $\oldeta,\gamma$, but with $p_2 = (1+1/\omega)p$ in place of $p$.
Furthermore, Lemma~\ref{lem:continuity} implies that $\min\{\oldeta_2,\gamma_2\} = \min\{\oldeta,\gamma\}+o(1)$,
so we deduce that \whp\ $L_C^{(2)} \le (\min\{\oldeta,\gamma\} + o(1))n$.
Finally, Corollary~\ref{cor:sprinkling} then implies that \whp\ $L_P=L_P^{(1)} \le (\min\{\oldeta,\gamma\} + o(1))n$,
as required.
\end{proof}

By applying Corollary~\ref{cor:upperboundlongestpath} shortly beyond the phase transition
threshold, we are able to prove Corollary~\ref{cor:shortlyafterphasetransition}.
\begin{proof}[Proof of Corollary~\ref{cor:shortlyafterphasetransition}]
Since $\min\{\oldeta,\gamma\}\leq \gamma$ it suffices to show that 
\[\gamma\leq \frac{2\eps^2}{(r-1)^2}+O(\eps^3).\] (In fact a similar computation for $\oldeta$ gives exactly the same result.)
By definition
\begin{align}
    \gamma &\numeq{\eqref{eq:definitionofgamma}} 1-\exp(-d\intfactsurvlim)-d\intfactsurvlim \exp(-d\intfactsurvlim) \nonumber \\
    & = 1-\left(1-d\intfactsurvlim+\frac{d^2\intfactsurvlim^2}{2}+O\left(\intfactsurvlim^3\right)\right)-d\intfactsurvlim\left(1-d\intfactsurvlim+O\left(\intfactsurvlim^2\right)\right) \nonumber \\
    &= \frac{d^2\intfactsurvlim^2}{2}+O\left(\intfactsurvlim^3\right) \label{eq:gammadefapprox}
\end{align}
Recall from~\eqref{eq:relatingtherhos} that $\intfactsurvlim$ was defined as a function of $\intvarsurvlim$,
which itself was defined as the largest solution of the fixed-point equation~\eqref{eq:fixedpointequation}.
We therefore need to estimate $\intvarsurvlim$.
From~\eqref{eq:fixedpointequation} we obtain
\begin{align*}
    \rho &= \frac{-d(r-1)+1}{(-\frac{1}{2}-\frac{d}{2}(r-1)(r-2))}+O\left(\rho^2\right).\label{eq:rhoapproximation}
\end{align*}

\noindent Substituting $d=\frac{1+\eps}{r-1}$ gives
\begin{equation*}
    \rho=\frac{2\eps}{1+(1+\eps)(r-2)}+O\left(\rho^2\right)=\frac{2\eps}{r-1+O(\eps)}+O\left(\rho^2\right)=\frac{2\eps}{r-1}+O\left(\rho^2\right).
\end{equation*}
In particular this implies that there exists a solution $\rho=\frac{2\eps}{r-1}+O(\eps^2)$
of the fixed point equation~\eqref{eq:fixedpointequation},
and by Claim~\ref{claim:behaviouroffixedpointsol} this is the unique positive solution
and therefore
$\intvarsurvlim=\frac{2\eps}{r-1}+O(\eps^2)$.
Substituting this into~\eqref{eq:relatingtherhos} we obtain
\begin{equation*}
\intfactsurvlim=1-\left(1-\frac{2\eps}{r-1}+O(\eps^2)\right)^{r-1}=2\eps+O(\eps^2).
\end{equation*}
Substituting this into~\eqref{eq:gammadefapprox}, we obtain
\begin{align*}
    \gamma &= \frac{2\eps^2}{(r-1)^2}+O\left(\eps^3\right).\qedhere
\end{align*}
\end{proof}

In order to prove Theorem~\ref{thm:bestknownresultcycles},
we also need a lower bound on $L_C$.
We will use a result of~\cite{cooley2020longest},
which provides a lower bound on $L_P$ together with Lemma~\ref{lem:standardsprinkling} to relate $L_P$ and $L_C$.
More precisely, one special case (the supercritical regime for $j=1$) of~\cite[Theorem~4]{cooley2020longest} can be
reformulated (in a slightly weakened but much simplified way) as follows.

\begin{theorem}[\!\!\cite{cooley2020longest}]\label{thm:pathsresult}
Let  $L_P$ denote the length of the longest loose  path in $\cH$. 
For all $r\in \NN_{\ge 3}$ there exists $\eps_0 \in (0,1]$ such that for any function
$\eps=\eps(n)<\eps_0$ which satisfies
$\eps^5 n \xrightarrow{n\to \infty} \infty$, setting $\delta=\eps/\sqrt{\eps_0}$
the following holds. 
If $p=\frac{1+\eps}{(r-1)\binom{n-1}{r-1}}$,
then \whp
 \[
 (1 - \delta)\frac{\eps^2 n}{4(r-1)^2} \leq L_P \leq (1 + \delta)\frac{2 \eps n}{(r-1)^2}.
 \]
\end{theorem}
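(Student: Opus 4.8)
The plan is to obtain Theorem~\ref{thm:pathsresult} not by a fresh argument but by specialising and simplifying \cite[Theorem~4]{cooley2020longest}. That theorem controls, for every fixed $r$ and every relevant overlap parameter, the length of the longest loose (more generally, $j$-tuple-connected) path in $H^r(n,p)$ across the subcritical, critical and supercritical regimes, with bounds of matching order. The first step is to extract the relevant piece: take the overlap parameter equal to $1$, so that a ``loose path'' there is exactly a loose path in the sense of Definition~\ref{def:loosecycle}, and restrict to the supercritical regime. In \cite{cooley2020longest} the edge probability in this regime is parametrised through an edge-density parameter whose critical value is $1$; matching it to our normalisation is the single identity $p\binom{n-1}{r-1}=d=\tfrac{1+\eps}{r-1}$, so that parameter equals $(r-1)d=1+\eps$. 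This records that our $p=\tfrac{1+\eps}{(r-1)\binom{n-1}{r-1}}$ is precisely the barely supercritical regime of \cite[Theorem~4]{cooley2020longest}, and that the hypotheses $\eps<\eps_0$ and $\eps^5 n\to\infty$ are (a special case of) the hypotheses under which that theorem is established.

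Second, I would rewrite the bounds of \cite[Theorem~4]{cooley2020longest} in closed form in $\eps$. The upper bound there is essentially of ``component-size'' type — a loose path of length $\ell$ occupies $\ell(r-1)+1$ vertices lying in a single loose-connected component, so $L_P$ is at most $\tfrac{1}{r-1}$ times the order of the largest such component — and the order of that component is $(\rho+o(1))n$, where $\rho$ is the survival probability of an associated Galton--Watson process, defined as the largest solution of a fixed-point equation analogous to~\eqref{eq:fixedpointequation}. Near criticality this survival probability admits the expansion $\rho=\tfrac{2\eps}{r-1}+O(\eps^2)$, obtained exactly as in the proof of Corollary~\ref{cor:shortlyafterphasetransition} (linearise the fixed-point equation at $0$, solve for the leading term, and invoke Claim~\ref{claim:behaviouroffixedpointsol} to identify it as the correct largest positive root). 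Substituting gives an upper bound of the form $(1+o(1))\tfrac{2\eps n}{(r-1)^2}$. The lower bound of \cite[Theorem~4]{cooley2020longest} is a genuine constructive bound of order $\eps^2 n$ (building a long path inside the giant loose-connected component / its core); feeding the same leading-order expansion of $\rho$ into that bound produces a lower bound of the form $(1-o(1))\tfrac{\eps^2 n}{4(r-1)^2}$.

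Third, I would absorb all error terms into a single multiplicative slack. Writing $\delta=\eps/\sqrt{\eps_0}$, the $O(\eps)$-relative errors from the Taylor expansions and the $o(1)$-in-$n$ errors from the concentration statements of \cite{cooley2020longest} are each $O(\delta)$ once $\eps_0$ is chosen small enough — and shrinking $\eps_0$ only tightens the constraint $\eps<\eps_0$ — which yields the stated $(1-\delta)\tfrac{\eps^2 n}{4(r-1)^2}\le L_P\le(1+\delta)\tfrac{2\eps n}{(r-1)^2}$.

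I expect the only real difficulty to be bookkeeping rather than mathematics: reconciling the normalisation and the precise functional form of the bounds in \cite{cooley2020longest} with those stated here, checking that the ``weakening'' is genuine (every inequality we assert is \emph{implied} by, not merely consistent with, the original bounds), and confirming that the growth condition $\eps^5 n\to\infty$ is no weaker than whatever condition on $n$ is imposed in \cite{cooley2020longest}. All of the probabilistic substance — the branching-process comparison, the concentration and sprinkling arguments for the component order, and the construction achieving the lower bound — is already contained in \cite{cooley2020longest} and is used here as a black box.
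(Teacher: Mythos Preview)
Your proposal is correct and matches the paper's treatment: the paper does not prove this statement at all but simply quotes it as a reformulated special case (the supercritical regime with $j=1$) of \cite[Theorem~4]{cooley2020longest}, exactly as you describe. Your additional bookkeeping---matching the normalisation $p\binom{n-1}{r-1}=\tfrac{1+\eps}{r-1}$, expanding the relevant quantities to leading order in $\eps$, and absorbing the errors into the multiplicative slack $\delta=\eps/\sqrt{\eps_0}$---is precisely the ``slightly weakened but much simplified'' reformulation the paper alludes to but does not spell out.
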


Note that Theorem~\ref{thm:pathsresult} allows for a wider range of $\eps$ than we consider in this paper,
in particular allowing $\eps$ to tend to zero sufficiently slowly. However, there is a $\Theta(\eps)$ gap
between the upper and lower bounds. Theorem~\ref{thm:bestknownresultcycles} improves the upper bound
and thus narrows the gap to just a constant factor.

\begin{proof}[Proof of Theorem~\ref{thm:bestknownresultcycles}]
The second and third inequalities are simply the statement of Corollary~\ref{cor:upperboundlongestpath},
so it remains to show that \whp
\[
\left(\frac{\eps^2}{4(r-1)^2}+O(\eps^3)\right)\cdot n\leq L_C.
\]
Note that we may assume that $\eps<\eps_0$, where $\eps_0$ is the parameter from
Theorem~\ref{thm:pathsresult},
since otherwise the $O(\eps^3)$ error term may in fact be the dominant term, and the result is trivial.

Let us set $p_2 = p$ and $p_1 = \left(1-\frac{1}{\log n}\right)p$. It is easy to check that these parameters
satsify the assumptions of Lemma~\ref{lem:standardsprinkling}, and therefore also of Corollary~\ref{cor:sprinkling}.

It is also clear that $p_1 = \frac{1+\eps_1}{(r-1)\binom{n-1}{r-1}}$, where $\eps_1 = \eps-\frac{1}{\log n}-\frac{\eps}{\log n}=\eps + O(\eps^2)$,
and
therefore the lower bound in Theorem~\ref{thm:pathsresult} (together with the observation that $\eps_1/\sqrt{\eps_0} = O(\eps_1)$)
states that \whp\ 
$$
L_P^{(1)} \ge \left(\frac{\eps_1^2}{4(r-1)^2} + O(\eps_1^3)\right)\cdot n = \left(\frac{\eps^2}{4(r-1)^2} + O(\eps^3)\right)\cdot n,
$$
and an application of Corollary~\ref{cor:sprinkling} completes the proof.
\end{proof}

\section{Peeling process}\label{sec:peeling}
Recall that for a given hypergraph $H$ the reduced core of the factor graph $G=G(H)$
is defined as the maximum subgraph with no nodes of degree one, which is similar to the $2$-core of $G$
except that isolated nodes are not deleted.
There is a simple peeling process to obtain the $2$-core of $G$ which is a standard procedure and has been used and analysed extensively in the literature.
We will consider the obvious adaptation of this process which obtains the reduced core rather than the $2$-core.

\begin{definition}[Peeling Process]\label{def:peeling}
In every round we check whether the factor graph has any nodes of degree one
and delete edges incident to such nodes. More precisely, we recursively define a sequence of graphs $(G_i)_{i\in\NN}$
where $G_0$ is the input graph and for $i\in\NN_{\geq 1}$, $G_i$ is the graph obtained from $G_{i-1}$
by removing all edges incident to nodes of degree one. We say that we \emph{disable} a node
if we delete its incident edges.

\end{definition} 
\noindent Note that there exists an $i_0$ such that $G_{i_0}=G_{i_0+k}=\corerzero$ for any $k\in\NN$. \pagebreak

We recall the definition of $\varprop{j}$ and $\factprop{j}$ in Theorem~\ref{thm:factor:reducedcoredegs} and observe that \[\varprop{j}\coloneqq\lim\limits_{\ell\to\infty}\varpropl{j}{\ell} \text{ \ and \ } \factprop{j}\coloneqq\lim\limits_{\ell\to\infty}\factpropl{j}{\ell},\] 
where $\varpropl{j}{\ell}, \factpropl{j}{\ell}$ are the proportions of variable nodes and factor nodes respectively which have degree $j$ in $G_{\ell}$ for $\ell\in\NN$. These limits exist since both $\left(\varpropl{j}{\ell}\right)_{\ell}$ and $\left(\factpropl{j}{\ell}\right)_{\ell}$ remain constant after a finite number of steps.

We will prove Theorem~\ref{thm:factor:reducedcoredegs} with the help of two lemmas. The first describes the asymptotic distribution of $\varpropl{j}{\ell}$ and $\factpropl{j}{\ell}$ for large $\ell$.
\begin{lemma}\label{lem:mainlemma1}
Let $r,d,\intvarsurvlim,\intfactsurvlim$ be as in Section~\ref{def:basicdefinitions}.
There exist an integer $\ell= \ell(n) \in \NN$ and a real number $\eps_1=\eps_1(n)=o(1)$
such that \whp, for any constant $j\in\NN$ 
\[
\varpropl{j}{\ell}=\prob(\widetilde \Po(d\intfactsurvlim)=j)\pm\eps_1
\]
and
\[
\factpropl{j}{\ell}=\prob(\widetilde \Bi(r,\intvarsurvlim)=j)\pm\eps_1.
\]
\end{lemma}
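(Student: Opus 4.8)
The plan is to track the peeling process via a "branching process / local structure" heuristic, which is standard for $k$-core analysis, but adapted to the fact that here we only peel leaves and we stop after a fixed number $\ell$ of rounds (rather than running to the fixed point). The key point is that after $\ell$ rounds, whether a given node $u$ is still present with a given degree depends only on the subgraph of $G$ within distance $O(\ell)$ of $u$, and for $G^r(n,p)$ with $p$ of the stated order this local neighbourhood converges (in the local weak sense) to a two-type Galton--Watson tree: variable nodes have $\Po(d)$ factor-children and factor nodes have $(r-1)$ variable-children (the size-biased version accounting for the edge to the parent). First I would make this precise: define, for the infinite tree rooted at a variable node, the events $A_j^{(\ell)}$ that the root survives $\ell$ rounds of peeling with exactly $j$ surviving children, and similarly for a factor-rooted tree; then $\mathbb{E}\varpropl{j}{\ell}$ and $\mathbb{E}\factpropl{j}{\ell}$ are, up to $o(1)$ error coming from the finitely many bad vertices whose $\ell$-neighbourhood is not tree-like, equal to the corresponding tree probabilities. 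This last approximation is where the condition that $\ell$ may grow with $n$ (but only slowly, like $\ell = \ell(n)$ chosen at the end) is used — one needs the expected number of vertices within distance $O(\ell)$ to be $n^{o(1)}$, which holds since the branching is $O(1)$ in expectation, and then a first-moment bound shows whp only $o(n)$ vertices have a non-tree-like neighbourhood or an atypically large one.

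Next I would compute the tree survival probabilities and identify the limits. Let $q_\ell$ be the probability that a factor node at depth $\ge 1$ in the tree is disabled within $\ell$ rounds, i.e.\ that at least $r-1$ of its $r-1$... more carefully: a factor node survives round $i$ iff all $r$ of its incident variable nodes are still "alive" going into round $i$; propagating down, a factor-child of the root gets disabled within $\ell$ rounds iff at least one of its other $r-1$ variable-children is disabled within $\ell-1$ rounds. Writing $x_\ell$ for the probability a variable node (non-root) is disabled within $\ell$ rounds, one gets a recursion $1 - x_{\ell} = $ (probability all its $\Po(d)$ factor-children survive) $= \exp(-d(1-(1-x_{\ell-1})^{r-1}))= F(1-x_{\ell-1})$ with $x_0 = 1$ (a leaf is immediately disabled) — or rather $x_0$ chosen to seed the monotone iteration correctly. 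By monotonicity of $F$ this iteration converges to the largest fixed point $\intvarsurvlim$ of \eqref{eq:fixedpointequation}, and moreover the convergence is monotone, so $1 - x_\ell \downarrow 1-\intvarsurvlim$ (equivalently $x_\ell \uparrow \intvarsurvlim$). Hence for $\ell$ large, $x_\ell = \intvarsurvlim \pm \eps_1/C$. Now the root variable node has degree $j$ in $G_\ell$ iff exactly $j$ of its $\Po(d)$ factor-children survive $\ell$ rounds, each surviving independently with probability $1-(1-x_{\ell-1})^{r-1} \to \intfactsurvlim$ by \eqref{eq:relatingtherhos}; thinning a $\Po(d)$ by this probability gives $\Po(d\intfactsurvlim)$ in the limit — except a surviving root of degree $1$ must itself be disabled in the next round, so degree-$1$ is reassigned to degree-$0$, giving exactly the $\widetilde\Po(d\intfactsurvlim)$ distribution. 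The analogous computation for a root factor node: its degree in $G_\ell$ is the number of its $r$ variable-children surviving, binomial with parameter $x_{\ell-1}\to\intvarsurvlim$, again with degree-$1$ reassigned to $0$, yielding $\widetilde\Bi(r,\intvarsurvlim)$. This pins down the claimed distributions in expectation with error $o(1)$ once $\ell$ is taken large enough (as a slowly growing function of $n$, or even a large constant suffices for the distributional part — the growth of $\ell$ is only needed to beat the $o(1)$ uniformly and is harmless).

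Finally I would upgrade the expectation statement to a whp statement by concentration. Since for fixed $j$ and fixed $\ell$ the quantity $\varpropl{j}{\ell} n$ is a function of the edge-set of $G$ in which changing the status of a single potential edge can only change $\varpropl{j}{\ell} n$ by at most $O(\ell)$ (the change propagates through at most an $\ell$-ball, and with high probability these balls have bounded size), I would apply the bounded-differences / Azuma--Hoeffding inequality of Lemma~\ref{lem:Azuma} along an edge-exposure martingale — conditioning on the high-probability event $\mathcal{E}$ that all relevant $\ell$-neighbourhoods are small so that the Lipschitz constants $c_i$ are genuinely $O(\ell \cdot \mathrm{poly})$ and $\sum c_i^2 = o(n^2)$. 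This gives $\varpropl{j}{\ell} = \mathbb{E}\varpropl{j}{\ell} \pm o(1)$ whp, and combined with the computation above, $\varpropl{j}{\ell} = \Pr(\widetilde\Po(d\intfactsurvlim)=j) \pm \eps_1$ whp, and similarly for $\factpropl{j}{\ell}$; taking a union bound over the (finitely many relevant) values of $j$ and choosing $\eps_1$ to decay appropriately and $\ell=\ell(n)$ to grow slowly completes the proof.

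The main obstacle I anticipate is the rigorous control of the bounded-differences constants and the non-tree-like exceptional vertices: one must show that, even though $\ell$ may be allowed to grow with $n$, the $\ell$-neighbourhoods in $G^r(n,p)$ remain small enough (subpolynomial) with sufficiently high probability that both the local-tree approximation and the martingale concentration go through with the same choice of $\ell$. Handling the interaction between "let $\ell$ be large enough for the fixed-point iteration to converge" and "let $\ell$ be small enough that neighbourhoods are controlled" — i.e.\ verifying these two requirements are simultaneously satisfiable — is the delicate quantitative heart of the argument, and I expect the paper's Section~\ref{sec:algorithm} (via the \coredetector\ algorithm) carries this out by a more hands-on exploration rather than the soft local-weak-convergence packaging sketched here.
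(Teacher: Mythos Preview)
Your approach is essentially the same as the paper's: local-tree approximation of the $\ell$-neighbourhood, a recursion for survival probabilities converging to the fixed point $\intvarsurvlim$, and concentration via a bounded-differences martingale. The paper packages the local computation as the \coredetector\ algorithm and uses a vertex-exposure (rather than edge-exposure) martingale restricted to the class of factor graphs with maximum degree $\le \log n$, but these are cosmetic differences.

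There is, however, one genuine gap. Your justification for why $\widetilde\Po$ rather than $\Po$ appears --- ``a surviving root of degree $1$ must itself be disabled in the next round, so degree-$1$ is reassigned to degree-$0$'' --- is a non-sequitur: $\varpropl{1}{\ell}$ counts nodes of degree $1$ \emph{in $G_\ell$}, and what happens in round $\ell+1$ is irrelevant. Indeed the local tree computation genuinely allows $\hat d_1(w)=1$ with positive probability, and in that case one can only conclude $d_{G_\ell}(w)\in\{0,1\}$ (this is exactly the content of Lemma~\ref{lem:algvspeeling}). The paper resolves this with a separate argument (Proposition~\ref{prop:1rare}): since the sets $\cV_i$ of nodes disabled in round $i$ satisfy $|\cV_\ell|\le|\cV_{\ell-2}|\le\cdots$ and are disjoint, one has $|\cV_\ell|=O(n/\ell)=o(n)$, whence $\Pr(d_{G_\ell}(w)=1)=o(1)$ by symmetry. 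Note this step genuinely requires $\ell\to\infty$; it is in fact the \emph{only} place where a growing $\ell$ is essential (the tree approximation and the concentration would both go through for large constant $\ell$, contrary to what you anticipate in your final paragraph). Your recursion setup is also slightly garbled --- the correct seed is $\rho_0=1$ (boundary survives), giving $\rho_t\downarrow\intvarsurvlim$ --- but you flag this yourself and it is easily fixed.
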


The second lemma states that $\varpropl{j}{\ell}$ and $\factpropl{j}{\ell}$ approximate $\varprop{j}$ and $\factprop{j}$, respectively.
\begin{lemma}\label{lem:mainlemma2}
Let $r,d$ be as in Section~\ref{def:basicdefinitions}.
For each $j\in \NN$,
let $\varprop{j},\factprop{j}$ be as defined in Theorem~\ref{thm:factor:reducedcoredegs},
let $\ell,\eps_1$ be as in Lemma~\ref{lem:mainlemma1} and set $\eps_2\coloneqq\sqrt{\eps_1}.$ Then \whp\ the peeling process will disable at most $\eps_2 n$ nodes after round $\ell$. In particular \whp, for any constant $j\in\NN$
\[
\varprop{j}=\varpropl{j}{\ell}\pm\eps_2
\]
and
\[
\factprop{j}=\factpropl{j}{\ell}\pm \frac{2\eps_2r}{d} .
\]
\end{lemma}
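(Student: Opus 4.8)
The plan is to show that once the peeling process has reached round $\ell$ (with $\ell$ the integer supplied by Lemma~\ref{lem:mainlemma1}), the number of nodes disabled in all subsequent rounds is $o(n)$ whp, which immediately gives the two approximate-equality statements by bookkeeping of degrees. The key point is that Lemma~\ref{lem:mainlemma1} tells us that after round $\ell$ the proportion of variable nodes of degree exactly $1$ in $G_\ell$ is approximately $\Pr(\widetilde\Po(d\intfactsurvlim)=1)=0$, and similarly the proportion of factor nodes of degree exactly $1$ is approximately $\Pr(\widetilde\Bi(r,\intvarsurvlim)=1)=0$; that is, $G_\ell$ already has only about $\eps_1 n$ nodes of degree $1$ (here I am using that $\widetilde\Po$ and $\widetilde\Bi$ put zero mass on $1$, so the ``$\pm\eps_1$'' is all that survives). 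So the remaining peeling must be triggered by at most $O(\eps_1 n)$ nodes, and one needs to argue that this cascade does not blow up.

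First I would make precise the ``cascade is small'' argument. Consider the set $S$ of nodes of degree $1$ in $G_\ell$, so $|S| \le 2\eps_1 n$ whp by Lemma~\ref{lem:mainlemma1} (summing the variable-node and factor-node bounds for $j=1$). Every node disabled after round $\ell$ lies in the ``fallout'' of $S$: it is reached by a chain of disablings starting at $S$. The natural way to control this is a branching/exploration argument on $G_\ell$ (or, more conveniently, on the reduced core $\corerzero$ together with the attached trees): when a variable node is disabled, it can only cause a factor node to drop in degree, and since every factor node in $\corer$ has degree $r$ and needs at least two degree-$\ge 2$ variable neighbours, disabling a single variable node reduces that factor node's ``connection count'' by at most one; a factor node only gets disabled once its count drops below $2$. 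Dually, disabling a factor node removes $r-1$ further edges. The upshot is that the fallout is dominated by a subcritical-type branching process seeded at $S$, and its total size is $O(|S|) = O(\eps_1 n) = o(n)$ whp. To run this cleanly I would work in the factor graph obtained after round $\ell$ and track, for each disabled node, a ``witness path'' back to $S$; bounding the number of such paths of each length via the local tree-like structure of $G^r(n,p)$ (equivalently, controlling short cycles / the local weak limit being a Galton--Watson tree with offspring $\Po(d\intfactsurvlim)$ resp.\ $\Bi(r,\intvarsurvlim)$) gives the geometric decay needed for the sum to be $O(|S|)$.

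Once the bound ``at most $\eps_2 n$ nodes disabled after round $\ell$'' is in hand, the two displayed equalities are routine. For variable nodes: a variable node's degree in $\corerzero$ differs from its degree in $G_\ell$ only if it or one of its neighbouring factor nodes was disabled after round $\ell$; the number of variable nodes affected is at most (nodes disabled after round $\ell$) $+$ $r\times$(factor nodes disabled after round $\ell$) $= O(\eps_2 n)$, but one checks the constant works out to give $\varprop{j} = \varpropl{j}{\ell} \pm \eps_2$ after absorbing constants into $\eps_2 = \sqrt{\eps_1}$ (the extra room from the square root is exactly what lets us be cavalier about multiplicative constants). For factor nodes, since every factor node has degree $r$ in $G$, the number whose degree changes after round $\ell$ is at most the number of edges deleted after round $\ell$, which is at most $r$ times the number of nodes disabled, i.e.\ $\le r\eps_2 n$; dividing by $m = (1\pm o(1))\tfrac{dn}{r}$ from Proposition~\ref{prop:numberfactornodes} converts this to a proportion at most $\tfrac{2\eps_2 r}{d}$ whp, which is the stated bound.

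The main obstacle is the cascade-control step: making rigorous that seeding a subcritical exploration at a linear-in-$\eps_1$ but still $o(n)$ set only produces $o(n)$ fallout, \emph{uniformly} and whp, despite the fact that $G_\ell$ is not a fresh random graph but the (heavily conditioned) output of $\ell$ rounds of peeling. I expect the paper handles this via the structural description of $\corerzero$ (the witness-path counting / local-tree-likeness mentioned above) rather than by re-randomising, and that the key quantitative input is precisely that the degree-$1$ count in $G_\ell$ is $o(n)$ — that is, the ``$\widetilde{}$'' in $\widetilde\Po,\widetilde\Bi$ having no mass at $1$ is what makes the argument go through. Everything after that is the elementary edge-counting in the previous paragraph.
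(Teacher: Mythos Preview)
Your high-level strategy is right: the seed set $S$ of degree-$1$ nodes in $G_\ell$ has size $O(\eps_1 n)$ by Lemma~\ref{lem:mainlemma1}, and one must show the subsequent cascade stays $o(n)$. But your guess about the mechanism is exactly backwards. You write that you ``expect the paper handles this via the structural description of $\corerzero$ \ldots\ rather than by re-randomising'', and you sketch a witness-path argument using local tree-likeness of $G^r(n,p)$. In fact the paper \emph{does} re-randomise: the key step is Claim~\ref{claim:uniformity}, which says that conditioned on its degree sequence (and being $r$-plain), $G_\ell$ is uniformly distributed. This lets one work in the configuration model and reveal matched half-edges one at a time with fresh randomness. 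Your alternative of controlling the cascade by path-counting in $G_\ell$ directly is left vague, and it is unclear how you would decouple the conditioning that the first $\ell$ rounds of peeling impose---you correctly flag this as the ``main obstacle'' but do not resolve it.

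There is also a confusion in your branching heuristic. You write that ``disabling a factor node removes $r-1$ further edges'', which would give a supercritical-looking branching factor. But in the reduced-core peeling (Definition~\ref{def:peeling}), a node is disabled only when its current degree is exactly~$1$, so disabling it removes exactly one edge and affects at most one other node. This is why the paper's coupling branching process $\cT$ has each vertex producing at most one child (with probability $1-c$, where $c$ is the probability from Claim~\ref{claim:terminationprob} that the revealed neighbour has degree $\ge 3$ and hence absorbs the deletion). The single-offspring structure is what makes the total cascade $O(|S|/c)$ rather than something that could blow up. Your final bookkeeping inherits this overcount (``at most $r$ times the number of nodes disabled''); the paper simply uses that each disabling removes one edge, so at most $\eps_2 n$ variable nodes and at most $\eps_2 n$ factor nodes change degree, and the $\frac{2\eps_2 r}{d}$ for factor proportions comes only from dividing by $m \ge \frac{dn}{2r}$.
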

Before proving these two lemmas, we show how together they imply our main result. 
\begin{proof}[Proof of Theorem~\ref{thm:factor:reducedcoredegs}]
Let $\ell,\eps_1,\eps_2$ be as in Lemmas~\ref{lem:mainlemma1} and~\ref{lem:mainlemma2}.
Applying these two lemmas, \whp\ we have
$$
\varprop{j} \stackrel{\mbox{\tiny L.\ref{lem:mainlemma2}}}{=} \varpropl{j}{\ell} \pm \eps_2
\stackrel{\mbox{\tiny L.\ref{lem:mainlemma1}}}{=} \Pr(\widetilde \Po(d\intfactsurvlim)=j) \pm (\eps_1 + \eps_2).
$$
Similarly, \whp\ we have
$$
\factprop{j} \stackrel{\mbox{\tiny L.\ref{lem:mainlemma2}}}{=} \factpropl{j}{\ell} \pm \frac{2\eps_2 r}{d}
\stackrel{\mbox{\tiny L.\ref{lem:mainlemma1}}}{=} \prob(\widetilde \Bi(r,\intvarsurvlim\, )=j)\pm \left(\eps_1 + \frac{2\eps_2 r}{d}\right).
$$
The statement of Theorem~\ref{thm:factor:reducedcoredegs} follows by setting $\eps = \eps_1+\eps_2\max\{1,2r/d\}$.
\end{proof}

\section{\coredetectortwo\ Algorithm: Proof of Lemma~\ref{lem:mainlemma1}}\label{sec:algorithm}
\subsection{Main algorithm}
In this section we will introduce the \coredetectortwo\ algorithm,
which is related to the peeling process. To do so, we need to define some notation---this
notation could apply to any graph, but since we will need it specifically for factor graphs, we introduce it in this
(slightly restrictive) setting for clarity.
\begin{definition}\label{def:algorithmdefs}
Let $G$ be a factor graph with variable node set $\cV$ and factor node set $\cF$.
We denote by $d_G(u,v)$ the distance between two nodes $u,v\in \cV \cup \cF$, i.e.\ the number of edges in a shortest path between them.
For each $\ell\in\NN$ and each $w\in \mathcal{V}\cup\mathcal{F}$, we define
\[
D_{\ell}(w)\coloneqq\{u\in\mathcal{V}\cup\mathcal{F}:d_{G}(w,u)=\ell\}
\]
and
\[
d_{\ell}(w)\coloneqq|D_{\ell}(w)|.
\]
\pagebreak 

\noindent Let 
\[
D_{\leq\ell}(w)=\bigcup_{i=0}^\ell D_i(w)
\]
and
\[N_{\leq\ell}(w)\coloneqq G[D_{\leq\ell}(w)],\]  i.e.\ the subgraph of $G$ induced on $D_{\leq\ell}(w)$.
\end{definition}

We consider a procedure called \coredetectortwo.
Given a factor graph $G$ on node set $\cV \cup \cF$
and a node $w \in \cV \cup \cF$,
we consider the factor graph as being rooted at $w$. In particular, neighbours of a node $v$ which are at distance $d_G(v,w)+1$ from $w$
are called \emph{children} of $v$.
Starting at distance $\ell \in \NN$ and moving up towards the root $w$, we recursively delete any node with no (remaining) children;
Algorithm~\ref{algorithm:coredetector 2} gives a formal description of this procedure.
We will denote by $\hD_{\ell-i}(w)$ the set of nodes in $D_{\ell-i}(w)$ which survive round $i$ and let $\hd_{i}(w)\coloneqq|\hD_{i}(w)|$. 
\begin{algorithm}
	\DontPrintSemicolon
	\KwIn{Integer $\ell\in\NN$, node $w\in\cV\cup\cF$, factor graph $N_{\leq\ell+1}(w)$}
	\KwOut{$\hd_1(w)$}
    $\hD_{\ell+1}(w)=D_{\ell+1}(w)$\;
	\For{$1\leq i\leq \ell$}{
    $\hD_{\ell-i+1}(w)\gets D_{\ell-i+1}(w)\setminus\ \Big\{v:N(v)\cap\hD_{\ell-i+2}(w)=\emptyset\Big\}$\;
	$\hd_{\ell-i+1}(w)\gets|\hD_{\ell-i+1}(w)|$}
		\caption{\coredetector }
	\label{algorithm:coredetector 2}
\end{algorithm}

It is rather difficult to analyse the peeling process directly and it turns out that \coredetectortwo\ is easier to analyse
while also being closely related.
\coredetectortwo\ is intended to model the effect of the peeling process on the degree of $w$ after $\ell$ steps
(although note that \coredetectortwo\ does delete nodes rather than merely disabling them).
Note, however, that it does not mirror the peeling process precisely; some nodes may be disabled in the peeling process much
earlier than they are deleted in \coredetectortwo,
and some nodes may be deleted in \coredetectortwo\ even
though they are actually in the reduced core, and are therefore never disabled in the peeling process.
Nevertheless, we obtain the following important relation.
Recall that $G_{\ell}$ is the graph obtained after the $\ell$-th round of the peeling process (see Definition~\ref{def:peeling})
and that $d_{G_\ell}(w)$ is the degree of the node $w$ in $G_\ell$.

\begin{lemma}\label{lem:algvspeeling}
Let $\ell\in\NN_{\geq 1}$ and $w\in\cV\cup\cF$. If there are no cycles in $N_{\leq\ell+1}(w)$, then
the output $\hd_1(w)$ of \coredetectortwo\ 
with input $\ell,w$ and $N_{\le \ell+1}(w)$ satisfies
\[d_{G_{\ell}}(w) \begin{cases}
=\hd_1(w) & \text{if  }\hd_1(w)\neq 1, \\
\leq \hd_1(w) & \text{if  }\hd_1(w)=1.
\end{cases}
\]
\end{lemma}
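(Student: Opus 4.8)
The plan is to prove this by induction on $\ell$, tracking a slightly stronger statement about every node at every level, not just about the root $w$. Specifically, I would show that for a node $u$ at distance $\ell - i$ from $w$, the number of surviving children of $u$ after round $i$ of \coredetectortwo\ is related to the degree of $u$ in $G_i$ (restricted to the subtree below $u$) by the same case distinction: they are equal unless the \coredetectortwo-count is $1$, in which case the peeling count is at most the \coredetectortwo-count. The acyclicity hypothesis on $N_{\le \ell+1}(w)$ is what makes this local: since $N_{\le\ell+1}(w)$ is a tree rooted at $w$, the behaviour of the peeling process on the edge between $u$ and its parent depends only on what happens in the subtree rooted at $u$, and never on anything outside $N_{\le\ell+1}(w)$ — crucially, a leaf of $G$ at distance exactly $\ell+1$ may become disabled in one peeling round but cannot affect level $\le \ell-1$ within $\ell$ rounds, which is why looking at $N_{\le \ell+1}(w)$ suffices.

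First I would set up the tree picture: root $N_{\le\ell+1}(w)$ at $w$, so each node $v$ with $0 < d_G(v,w) \le \ell+1$ has a unique parent and a (possibly empty) set of children. For a node $v$ at distance $\ell-i$ and for $0 \le i \le \ell$, let $T_v$ be the subtree of $N_{\le\ell+1}(w)$ hanging below $v$ (including $v$), and let $c_i(v)$ be the number of children of $v$ in $T_v$ whose connecting edge to $v$ survives $i$ rounds of the peeling process restricted to $T_v$. The key claim is then: for every such $v$ and every valid $i$, the set $\hat D$-membership indicator of $v$ after round $i$ equals the indicator that $c_i(v) \ge 1$, and moreover $c_i(v) = \hat c_i(v)$ when $\hat c_i(v) \neq 1$ while $c_i(v) \le \hat c_i(v)$ when $\hat c_i(v) = 1$, where $\hat c_i(v)$ is the number of children of $v$ lying in $\hat D_{\ell-i+1}(w)$. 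I expect the induction on $i$ (equivalently, on the level, working from the leaves at level $\ell+1$ inward) to carry this through: a child $v'$ of $v$ is deleted by \coredetectortwo\ in round $i$ exactly when it has no surviving children, i.e.\ $\hat c_{i-1}(v') = 0$; by induction this matches $c_{i-1}(v') = 0$ except in the boundary case, and a short argument handles why the edge $vv'$ gets removed in the peeling process precisely under the corresponding condition. Applying this at $v = w$ with $i = \ell$ gives $d_{G_\ell}(w) = c_\ell(w)$, and since acyclicity means the peeling process on $T_w = N_{\le\ell}(w)$ agrees with the peeling process on all of $G$ as far as the degree of $w$ after $\ell$ rounds is concerned (no edge at $w$ can be removed due to influence propagating from outside $N_{\le\ell+1}(w)$ in $\le\ell$ rounds), this yields the lemma.

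The main obstacle, I expect, is handling the asymmetry in the $\hat c = 1$ case cleanly, i.e.\ explaining why \coredetectortwo\ can \emph{overestimate} the peeling degree but only when the estimate is $1$. The source of this is that \coredetectortwo\ deletes a node only when it has lost \emph{all} its children, whereas the peeling process disables a node the moment its degree drops to $1$ — so a node $v'$ at an intermediate level with exactly one surviving child is kept by \coredetectortwo\ but gets disabled by peeling (its edge to its parent is removed). This means that when $v$ has exactly one child $v'$ in $\hat D$, peeling may nonetheless have already severed $v$ from $v'$, giving $c_i(v) = 0 < 1 = \hat c_i(v)$; but when $v$ has two or more children in $\hat D$, each such child genuinely has $\ge 1$ surviving child of its own and hence (by the inductive case distinction, now in the "non-$1$" branch for $v'$ or, if $v'$ has count $1$, because $v'$ still has positive degree toward $v$ so the edge $vv'$ survives) the counts agree exactly. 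Making this dichotomy precise — in particular verifying that the error introduced at a "degree-$1$" node never propagates to change a count from $2$ to something smaller, only from $1$ to $0$ — is the delicate bookkeeping step, but it is exactly the statement the lemma is set up to accommodate, so the induction hypothesis should close.
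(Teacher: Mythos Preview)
Your overall strategy—track a per-level invariant relating $c_i(v)$ and $\hat c_i(v)$ and induct from the leaves toward $w$—is natural, but the induction as you outline it does not close, and the paper takes a different (and simpler) route.

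\textbf{The gap.} The crux is exactly the case you flag as ``delicate''. Suppose $v$ is at level $\ell-i$ with $\hat c_i(v)\ge 2$, and some child $v'\in\hat D_{\ell-i+1}$ has $\hat c_{i-1}(v')=1$. Your inductive hypothesis only gives $c_{i-1}(v')\le 1$, so it allows $c_{i-1}(v')=0$. If that happens, then in $G_{i-1}$ the node $v'$ has no surviving children; together with its (still-present) parent edge this gives $v'$ degree exactly~$1$, so $v'$ is disabled in round $i$ and the edge $vv'$ is removed. This is precisely the opposite of your parenthetical ``$v'$ still has positive degree toward $v$ so the edge $vv'$ survives'': positive degree toward $v$ means degree~$1$, which triggers disabling. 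Hence, from the hypothesis alone, $c_i(v)$ could drop below $\hat c_i(v)$ even when the latter is $\ge 2$, and the inductive step fails.
The point is that whether the bad case $c_{i-1}(v')=0$ actually occurs depends on the structure \emph{above} $v'$ (whether a disabling wave descends from $w$), which your purely bottom-up hypothesis cannot see. A separate issue is your definition of $c_i(v)$ via peeling ``restricted to $T_v$'': with $v$ as root of $T_v$ it has no parent, so if $v$ has a single child it is disabled in round~$1$, which does not match the quantity you need at $v=w$ and does not compose across levels.

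\textbf{What the paper does instead.} The paper avoids the level-by-level bookkeeping entirely. It proves $d_{G_\ell}(w)\le \hd_1(w)$ by a short induction showing that any node deleted in round $i$ of \coredetectortwo\ is disabled by round $i$ of the peeling process. For the reverse inequality when $\hd_1(w)\ge 2$, it observes that each of the $j:=\hd_1(w)$ surviving children of $w$ has a descendant in $D_{\ell+1}(w)$, yielding $j$ internally disjoint paths of length $\ell+1$ from $w$. A direct induction on the peeling round then shows that after $s$ rounds these paths, truncated to length $\ell+1-s$, all survive (the key being that $w$, as the meeting point of $\ge 2$ paths, never drops to degree~$1$). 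Setting $s=\ell$ gives $d_{G_\ell}(w)\ge j$. This ``spider'' argument is exactly what is needed to rule out the bad case above, but it is applied once at $w$, not at every level.
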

\begin{proof}
For an upper bound,
we will show that if a given node $v\in\cV\cup\cF$ is deleted in round $i$ of \coredetectortwo, 
it must have been disabled at some round $i'\leq i$ in the peeling process for the reduced core.
In particular, by setting $i=\ell$ we immediately obtain
$d_{G_{\ell}}(w)\leq\hd_1(w).$
We prove the statement by induction on $i$.

For $i=1$, if a node $v$ is deleted in round one of \coredetectortwo, then $v$ had no children,
and therefore it has only one neighbour in $G=G_0$ (its parent in $N_{\le \ell+1}(w)$).
Thus $v$ will be disabled in round one
of the peeling process.
Now suppose $v$ is deleted in round $i\ge 2$ of \coredetector, which must mean that all its children
(if it had any) are deleted in step $i-1$ of \coredetector.
By the induction hypothesis, all its children are disabled by step at most $i-1$ of the peeling process
and so have degree $0$ in $G_{i-1}$.
Therefore $v$ itself has degree at most one in $G_{i-1}$ (from its parent)
and so will be disabled in round $i$ of the peeling process if it has not been disabled already.

It remains to prove that 
$d_{G_{\ell}}(w)\geq\hd_1(w)$
if $\hd_1(w)\geq 2$. Let $j\coloneqq\hd_1(w)\ge 2$ be the number of children of the root $w$ which survive \coredetectortwo.
Each such child must have a descendant in $D_{\ell+1}(w)$, otherwise it would not survive \coredetectortwo.
Thus we have $j$ paths of length $\ell+1$ which all meet at $w$, but are otherwise disjoint (since $N_{\le \ell+1}(w)$ contains no cycles).
By induction on $i$, we deduce that after $i$ rounds of the peeling process,
there are $j$ paths of length $\ell+1-i$ which meet only in $w$,
and in particular after $\ell$ rounds of the peeling process, $d_{G_\ell}(w) \ge j$, as required.
\end{proof}

From now on for the rest of this section, we will always have $G=G^r(n,p)$.
Observe that if $d_{G_\ell}(w)\in \{0,1\}$, then $d_{R_G}(w)=0$. However, if $\ell$ is sufficiently large
we can even say that
in $G_\ell$ the degree will almost always be $0$.

\begin{proposition}\label{prop:1rare}
For any integer-valued function $\ell=\ell(n) \xrightarrow{n\to \infty} \infty$ and node $w$ we have
$\Pr\left(d_{G_\ell}(w)=1\right) = o(1)$.
\end{proposition}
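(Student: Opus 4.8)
The plan is to exploit Lemma~\ref{lem:algvspeeling}: on the (whp-typical) event that $N_{\le \ell+1}(w)$ is a tree, the degree $d_{G_\ell}(w)$ agrees with the output $\hd_1(w)$ of \coredetectortwo\ whenever $\hd_1(w)\ne 1$, and $\hd_1(w)=1$ is the only case where the two can differ. So I would first observe that $\Pr(d_{G_\ell}(w)=1)$ is at most $\Pr(N_{\le\ell+1}(w)\text{ contains a cycle}) + \Pr(\hd_1(w)=1)$, and then bound each term. For the first term, the standard local-tree-likeness estimate for $G^r(n,p)$ gives that the $(\ell+1)$-ball around a fixed node $w$ is a tree with probability $1-o(1)$ as long as $\ell = \ell(n)\to\infty$ slowly enough; since the statement only needs to hold for the particular $\ell$ furnished by Lemma~\ref{lem:mainlemma1} (which we are free to take to grow as slowly as we wish, e.g.\ $\ell = \Theta(\log\log n)$), this contributes $o(1)$. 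The substance is therefore to show $\Pr(\hd_1(w)=1) = o(1)$.

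For the event $\{\hd_1(w)=1\}$, I would analyse \coredetectortwo\ on the Galton--Watson-type tree that locally approximates $G^r(n,p)$ rooted at $w$: each variable node has a $\Po(d)$-distributed number of child factor nodes (in the limit), and each factor node has exactly $r-1$ child variable nodes. The survival of a node under \coredetectortwo\ is determined recursively — a node survives round $i$ iff at least one of its children survives round $i-1$ — so if $q_i$ denotes the probability that a node (of the appropriate type) at depth $\ell-i$ survives $i$ rounds, then $q_i$ satisfies a one-step recursion of the form $q_i = 1-g(1-q_{i-1})$ for the relevant generating function $g$, and $q_i$ converges monotonically to the survival probability $\rho$ (resp.\ $\hat\rho$), consistently with the fixed-point equation~\eqref{eq:fixedpointequation}. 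The key point is that $\hd_1(w)$ is, in the limit, distributed as the number of surviving children of the root, i.e.\ a $\widetilde\Po$ or $\widetilde\Bi$ variable, which by construction puts probability exactly $0$ on the value $1$. Hence $\Pr(\hd_1(w)=1)\to 0$. Concretely, $\hd_1(w)=1$ requires that exactly one child-subtree of $w$ reaches depth $\ell+1$ (survives) while all others die out; as $\ell\to\infty$ the probability that a given child-subtree survives is bounded away from both $0$ and $1$, and independence across child-subtrees (on the tree) forces the probability of the ``exactly one survives'' event to $0$ — more carefully, one shows $\Pr(\hd_1(w)=1)\le \EE[K\,s(1-s)^{K-1}]$ where $K$ is the offspring count and $s\to s_\infty\in(0,1)$ the per-child survival probability, and this expectation tends to $0$ because $(1-s_\infty)^{K-1}$ forces $K$ small while $s(1-s)^{K-1}\to 0$ for each fixed $K\ge 1$; dominated convergence finishes it.

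The main obstacle is making the passage from the exact finite model $N_{\le\ell+1}(w)$ in $G^r(n,p)$ to the idealised Galton--Watson tree rigorous, uniformly in $\ell=\ell(n)\to\infty$: one needs that the $(\ell+1)$-ball around $w$ not only is a tree whp but also has offspring distributions close to $\Po(d)$ (for variable nodes) with total error $o(1)$ even though the number of nodes explored grows with $\ell$. This is a routine but slightly delicate coupling/exploration argument — exposing the ball level by level, the number of vertices seen is $d^{O(\ell)} = n^{o(1)}$ for slowly growing $\ell$, so the binomial offspring counts are within $o(1)$ total variation of Poisson and no vertex is revisited whp. Once this coupling is in place, the computation $\Pr(\hd_1(w)=1)\to 0$ reduces to the generating-function recursion above, which is elementary. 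I would present the coupling as the technical heart and relegate the recursion to a short lemma, or simply cite the monotone convergence to the fixed point $\rho$ that is already implicit in the surrounding analysis.
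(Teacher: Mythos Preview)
Your decomposition $\Pr(d_{G_\ell}(w)=1)\le \Pr(N_{\le\ell+1}(w)\text{ not a tree}) + \Pr(\hd_1(w)=1)$ is correct given Lemma~\ref{lem:algvspeeling}, but the central claim $\Pr(\hd_1(w)=1)\to 0$ is false, and this is exactly why the paper needs a separate argument here. On the idealised tree, a child of the root survives \coredetectortwo\ iff its subtree reaches depth $\ell+1$; the per-child survival probability $s$ converges to $\intfactsurvlim\in(0,1)$ (for variable $w$; to $\intvarsurvlim$ for factor $w$), not to $0$ or $1$. Hence $\hd_1(w)$ is, in the limit, $\Bi(\Po(d),\intfactsurvlim)=\Po(d\intfactsurvlim)$, and $\Pr(\hd_1(w)=1)\to d\intfactsurvlim e^{-d\intfactsurvlim}>0$ in the supercritical regime. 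Your dominated-convergence step asserts that $s(1-s)^{K-1}\to 0$ for each fixed $K\ge 1$, but with $s\to s_\infty\in(0,1)$ this quantity converges to $s_\infty(1-s_\infty)^{K-1}>0$, so the argument collapses. The confusion with the $\widetilde\Po$ distribution is that $\td_1(w)$, not $\hd_1(w)$, is approximately $\widetilde\Po$; the tilde is imposed \emph{by definition}, precisely to account for the fact that $\hd_1(w)=1$ occurs with non-negligible probability.

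The paper's proof bypasses \coredetectortwo\ entirely. It uses the elementary deterministic fact that in the peeling process the number of nodes disabled at round $i$ is non-increasing along the subsequence of rounds of the same parity, so the number disabled at round $\ell$ is $O(n/\ell)=o(n)$; a symmetry argument over nodes then gives $\Pr(d_{G_\ell}(w)=1)=o(1)$. Note that the paper later \emph{uses} Proposition~\ref{prop:1rare} in Corollary~\ref{cor:Gldegalmosteq} precisely to bridge the gap between $d_{G_\ell}(w)$ and $\td_1(w)$ created by the event $\{\hd_1(w)=1\}$, so your route would also have been circular in the paper's logical flow.
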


\begin{proof}
Let us first assume that $w$ is a variable node.
For any integer $i \ge 1$, let $\cV_i$ and $\cF_i$ be the set of variable nodes and factor nodes respectively which are disabled in round~$i$
of the peeling process.
It is an elementary fact about the peeling process that for any integer $i \ge 2$
we have
$|\cV_i| \le |\cF_{i-1}|$ and $|\cF_i| \le |\cV_{i-1}|$ (deterministically),
from which it follows that $|\cV_i|\le |\cV_{i-2}|$ for $i \ge 3$.
Therefore we have
$$
|\cV_\ell| \le |\cV_{\ell-2}| \le \ldots \le |\cV_{\ell -2\lfloor \frac{\ell-1}{2}\rfloor}|.
$$
Furthermore, the $\cV_i$ are all disjoint, and so we have
$$|\cV_\ell| \le \frac{n}{1+\lfloor \frac{\ell-1}{2}\rfloor} = O(n\ell^{-1}) = o(n)$$
deterministically. Therefore for large enough $n$ we have
$|\cV_\ell| \in K \coloneqq \left[n/\sqrt{\ell}\right]_0$ and so by symmetry we have
$$
\Pr\left(w \in \cV_\ell\right)  = \sum_{k \in K} \left( \Pr\left(|\cV_\ell|=k\right) \cdot \frac{k}{n} \right) \le \frac{1}{\sqrt{\ell}} \cdot \sum_{k \in K} \Pr\left(|\cV_\ell|=k\right) = o(1),
$$
as required.
The proof when $w$ is a factor node is essentially identical.
\end{proof}

\subsection{Analysis of \coredetectortwo}

We proceed with the analysis of \coredetectortwo\ and we will choose the parity of $\ell$ such that $D_{\ell+1}(w)$ consists of variable nodes,
i.e.\ if $w\in \cV$, then we will choose $\ell$ odd, and if $w \in \cF$ we will choose $\ell$ to be even.
This convention is merely for technical convenience since it ensures that we
know which type of nodes are being considered in round $i$ of \coredetector\ and thus avoid
a case distinction.

Let $w\in\cV\cup\cF$ and $\ell\in\NN_{\geq 1}$ be given. We say the event $E_w(\ell)$ holds if \emph{neither} of the following two events occur.  
\begin{enumerate}[label=\textnormal{\textbf{(E\arabic*)}}]
    \item $|D_{\leq \ell+1}(w)|\geq(\log n)^2$;
    \item $D_{\leq \ell+1}(w)$ contains a node which lies on a cycle of length at most $2\ell$.
\end{enumerate}
We will later condition on the event $\eone$ holding, and therefore need to know that it is very likely.

\begin{lemma}\label{lem:eventE}
For any function $\ell=o(\log\log n)$ and node $w \in \cV \cup \cF$,
$$
\Pr\left(\eone\right) \ge 1-\exp\left(-\Theta\left(\sqrt{\log n}\right)\right).
$$
Furthermore, \whp\ all but $o(n)$ nodes $w\in\cV\cup\cF$ satisfy $\eone$.
\end{lemma}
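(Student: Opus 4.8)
The plan is to prove Lemma~\ref{lem:eventE} by a standard first-moment/branching-process comparison. The key observation is that $N_{\leq\ell+1}(w)$ is a neighbourhood in $G^r(n,p)$, and since $p = \Theta(n^{-(r-1)})$ and each factor node has degree $r$, the expected number of nodes at each successive distance grows by a bounded factor. More precisely, I would first show that for each fixed $i$, $\expec(d_i(w)) = \Theta(1)$ by induction: each variable node has $\Bi\big(\binom{n-1}{r-1}, p\big)$-many incident factor nodes in expectation roughly $d$, and each factor node contributes $r-1$ new variable nodes, so the expected size of $D_{\leq \ell+1}(w)$ is $\sum_{i=0}^{\ell+1} \Theta((d(r-1))^i)$. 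When $\ell = o(\log\log n)$ this is at most $(\log n)^{o(1)}$, in particular $O((\log n)^{1/2})$, say. Then I would use a concentration/large-deviation bound for the size of such a neighbourhood (a branching-process-type tail bound, or an iterated application of the Chernoff bound in Lemma~\ref{lem:chernoffbounds} across the $\ell+1$ levels, carefully tracking that conditioning on the history only reduces the number of available vertices) to conclude that $\Pr(|D_{\leq\ell+1}(w)| \geq (\log n)^2) \leq \exp(-\Theta(\sqrt{\log n}))$. This handles event \textbf{(E1)}.

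For event \textbf{(E2)}, I would bound the probability that $D_{\leq\ell+1}(w)$ contains a node on a short cycle by a union bound over potential cycles. On the event that $|D_{\leq\ell+1}(w)| < (\log n)^2$ (which we have just shown is overwhelmingly likely), the expected number of cycles of length $\le 2\ell$ passing through this neighbourhood is small: a cycle of length $2k$ in the factor graph uses $k$ factor nodes and $k$ variable nodes, so the number of ways to embed it rooted near $w$ is at most something like $(\log n)^2 \cdot n^{2k - 1} \cdot \binom{n-1}{r-1}^{-k}\cdot$(combinatorial factors)$= O(n^{-1}(\log n)^{O(\ell)})$, which when $\ell = o(\log\log n)$ is $n^{-1+o(1)} = o(\exp(-\Theta(\sqrt{\log n})))$. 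Summing the failure probabilities of \textbf{(E1)} and \textbf{(E2)} gives $\Pr(\eone) \ge 1 - \exp(-\Theta(\sqrt{\log n}))$ as claimed. The ``furthermore'' statement then follows by linearity of expectation and Markov: the expected number of nodes $w$ violating $\eone$ is at most $n \cdot \exp(-\Theta(\sqrt{\log n})) = o(n)$, so \whp\ all but $o(n)$ nodes satisfy $\eone$.

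The main obstacle I anticipate is making the exploration argument for \textbf{(E1)} fully rigorous: one has to reveal the neighbourhood level by level and argue that, conditioned on the portion of the graph revealed so far, the number of new factor nodes adjacent to a given (un-revealed-into) variable node is stochastically dominated by $\Bi\big(\binom{n-1}{r-1}, p\big)$, and similarly for the variable nodes of a new factor node — plus one must be slightly careful that factor nodes are genuinely ``new'' and not collisions (collisions are precisely what create the short cycles of \textbf{(E2)}, so on the good event they do not occur). Since $\ell = o(\log\log n)$ is quite small, the growth factor raised to the power $\ell+1$ stays sub-polylogarithmic, which is what buys the $(\log n)^2$ threshold with a large gap; the tail bound $\exp(-\Theta(\sqrt{\log n}))$ is then comfortable. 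Because this is a routine (if slightly fiddly) second-moment-flavoured computation of a type that appears frequently in the random hypergraph literature, I would relegate the full details to an appendix, as the authors do (Appendix~\ref{app:eventE}), and only sketch the branching comparison in the main text.
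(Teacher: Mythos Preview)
Your proposal is correct and follows essentially the same strategy as the paper: a branching-process upper coupling plus Chernoff for the size bound \textbf{(E1)}, a first-moment count for short cycles for \textbf{(E2)}, and Markov's inequality for the ``furthermore''. The only implementation-level differences are that the paper waits until the first generation of size at least $\sqrt{\log n}$ before applying Chernoff (to avoid weak bounds on small populations) and bounds the jump into that generation via the maximum-degree estimate, and that for \textbf{(E2)} the paper counts short-cycle vertices globally and then localises by symmetry rather than counting cycles rooted in the neighbourhood directly; neither of these changes the substance of the argument.
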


The (standard) proof appears in Appendix~\ref{app:eventE}.

Now let us define
$$
\td_1(w):=
\begin{cases}
\hd_1(w) & \mbox{if }\hd_1(w) \neq 1\\
0 & \mbox{if }\hd_1(w) =1.
\end{cases}
$$
In other words, $\td_1(w)$ is identical to $\hd_1(w)$ except that values of $1$ are replaced by $0$
(similar to the $\widetilde\Po$ and $\widetilde\Bi$ distributions compared to the $\Po$ and $\Bi$ distributions).
We can combine Lemmas~\ref{lem:eventE} and~\ref{lem:algvspeeling} and Proposition~\ref{prop:1rare}
to obtain the following.

\begin{corollary}\label{cor:Gldegalmosteq}
For any integer-valued function $\ell=\ell(n) \xrightarrow{n\to \infty} \infty$ which also satisfies $\ell = o(\log \log n)$ and any node $w$ we have
$$
\Pr\left(d_{G_\ell}(w) \neq \td_1(w)\right) = o(1).
$$
\end{corollary}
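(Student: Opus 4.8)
The plan is to deduce the corollary from a deterministic containment of events, together with the two probabilistic inputs Lemma~\ref{lem:eventE} and Proposition~\ref{prop:1rare}. Concretely, I will show that deterministically
\[
\{d_{G_\ell}(w)\neq\td_1(w)\}\subseteq\overline{\eone}\cup\{d_{G_\ell}(w)=1\},
\]
after which a union bound immediately gives
\[
\Pr(d_{G_\ell}(w)\neq\td_1(w))\le\Pr(\overline{\eone})+\Pr(d_{G_\ell}(w)=1)\le\exp(-\Theta(\sqrt{\log n}))+o(1)=o(1),
\]
where the first error term is controlled by Lemma~\ref{lem:eventE} (applicable since $\ell=o(\log\log n)$) and the second by Proposition~\ref{prop:1rare} (applicable since $\ell\to\infty$).

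To prove the containment, suppose $\eone$ holds. Then $N_{\le\ell+1}(w)$ is a \emph{forest}: take a breadth-first spanning tree of $N_{\le\ell+1}(w)$ rooted at $w$, in which every node sits at depth at most $\ell+1$; any edge of $N_{\le\ell+1}(w)$ lying outside this tree would close a cycle, of length at most $2(\ell+1)+1$, through a node of $D_{\le\ell+1}(w)$, contradicting (E2) (if one prefers, one may simply replace the bound $2\ell$ in (E2) by $2(\ell+1)+1$, which does not affect Lemma~\ref{lem:eventE}). Since $N_{\le\ell+1}(w)$ has no cycles, Lemma~\ref{lem:algvspeeling} applies to $w$. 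We now case-split on $\hd_1(w)$. If $\hd_1(w)\neq1$, then Lemma~\ref{lem:algvspeeling} yields $d_{G_\ell}(w)=\hd_1(w)$, which equals $\td_1(w)$ by the definition of $\td_1$ (this covers both $\hd_1(w)=0$ and $\hd_1(w)\ge2$). If $\hd_1(w)=1$, then $\td_1(w)=0$, while Lemma~\ref{lem:algvspeeling} only gives $d_{G_\ell}(w)\le\hd_1(w)=1$, so $d_{G_\ell}(w)\in\{0,1\}$; hence $d_{G_\ell}(w)\neq\td_1(w)$ can occur only when $d_{G_\ell}(w)=1$. In all cases, on $\eone$ the event $\{d_{G_\ell}(w)\neq\td_1(w)\}$ implies $\{d_{G_\ell}(w)=1\}$, which is exactly the claimed containment.

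The only genuinely substantive point is the forest claim — that is, checking that the ``no short cycles'' part of $\eone$ really certifies the cycle-freeness hypothesis of Lemma~\ref{lem:algvspeeling} for the entire ball $N_{\le\ell+1}(w)$; everything after that is bookkeeping. It is worth recording where the two hypotheses on $\ell$ enter: $\ell=o(\log\log n)$ is used only via Lemma~\ref{lem:eventE}, to make $\eone$ overwhelmingly likely, whereas $\ell\to\infty$ is used only via Proposition~\ref{prop:1rare}, which is precisely what allows us to absorb the degenerate value $\hd_1(w)=1$ into the $o(1)$ error term and thus pass from $\hd_1$ to $\td_1$.
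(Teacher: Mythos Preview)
Your proof is correct and follows essentially the same approach as the paper: the containment $\{d_{G_\ell}(w)\neq\td_1(w)\}\subseteq\overline{\eone}\cup\{d_{G_\ell}(w)=1\}$ via Lemma~\ref{lem:algvspeeling}, followed by a union bound using Lemma~\ref{lem:eventE} and Proposition~\ref{prop:1rare}. You are in fact more careful than the paper in explicitly justifying that $\eone$ forces $N_{\le\ell+1}(w)$ to be acyclic (and noting the slight mismatch between the cycle-length bound $2\ell$ in (E2) and what is actually needed), a point the paper glosses over.
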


\begin{proof}
By Lemma~\ref{lem:algvspeeling}, the only cases in which $d_{G_\ell}(w)$ and $\td_1(w)$ can differ are if $\eone$ does not hold or if $d_{G_\ell}(w)=1$.
Thus by applying Lemma~\ref{lem:eventE} and Proposition~\ref{prop:1rare}, we obtain
\begin{align*}
\Pr\left(d_{G_\ell}(w) \neq \td_1(w)\right) & \le \Pr\left(\bar{E}_w(\ell)\right) + \Pr\left(d_{G_\ell}(w) =1\right)\\
& \le \exp\left(-\Theta\left(\sqrt{\log n}\right)\right) + o(1) = o(1).\qedhere
\end{align*}
\end{proof}

We next describe the survival probabilities of internal (i.e.\ non-root) variable and factor nodes in each round of \coredetector.
Recall that for any $i\in[\ell]$
the set $\hD_{\ell+1-i}(w)$ consists of nodes within $D_{\ell+1-i}(w)$ which survive the $i$-th round of \coredetectortwo. We define the recursions
\begin{align*}
    \intvarsurv{0} &= 1,\\
    \intfactsurv{t} &= \prob(\Bi(r-1,\intvarsurv{t-1})\geq 1),\\
    \intvarsurv{t} &= \prob(\Po(d\intfactsurv{t})\geq 1).
\end{align*}
\begin{lemma}\label{lem:survivalprobs}
Let $w\in\cV\cup\cF$ and $\ell$ be odd if $w\in\cV$ and even if $w\in\cF$. Let $t\in\NN$ with $\ 0\leq t\leq \frac{\ell+1 }{2}$ be given.
Conditioned on the event $\eone$:
\begin{enumerate}
\item
For each $u\in D_{\ell+1-2t}(w)$ independently of each other we have
\[
\prob[u\in\hD_{\ell+1-2t}(w)]=\intvarsurv{t}+o(1);
\]
\item
For each $a\in D_{\ell-2t}(w)$ independently of each other we have
\[
\prob[a\in\hD_{\ell-2t}(w)]=\intfactsurv{t+1}+o(1).
\]
\end{enumerate}
In particular:
\begin{enumerate}[(i)]
\item If $w \in \cF$ and $t_1=\ell/2$, then for each $u \in D_1(w)$ independently of each other,
$$
    \prob[u\in\hD_{1}(w)]=\intvarsurv{t_1}+o(1);
$$

\item\label{eq:probfactnodesurvives} If $w \in \cV$ and $t_2=(\ell+1)/2$, then for each $a\in D_1(w)$ independently of each other,
$$
    \prob[a\in\hD_{1}(w)]=\intfactsurv{t_2}+o(1).
$$
\end{enumerate}
\end{lemma}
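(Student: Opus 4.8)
The plan is to work throughout conditioned on $\eone$, under which $N_{\leq\ell+1}(w)$ is, by definition, a tree on fewer than $(\log n)^2$ nodes rooted at $w$; recall also that, by the parity convention, distance $\ell+1-2t$ from $w$ is a ``variable level'' and distance $\ell-2t$ a ``factor level''. The structural point I would exploit is that, since $N_{\leq\ell+1}(w)$ is a tree, membership of a node $v$ at distance $k\le\ell$ in $\hD_{k}(w)$ is a deterministic function of the subtree of $N_{\leq\ell+1}(w)$ hanging below $v$; moreover the subtrees below distinct nodes at a common distance are vertex-disjoint and --- since there are no short cycles --- are revealed using disjoint coordinates of the product space defining $G^r(n,p)$. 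Hence, up to the $o(1)$ slack caused by conditioning on the single event $\eone$, the survival events of distinct level-mates are independent, and it suffices to compute one marginal survival probability, by induction on $t$ (equivalently, working up the tree from the leaves at distance $\ell+1$).

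Next I would pin down the branching structure. Conditioned on $\eone$, a non-root factor node has exactly $r-1$ children, since factor nodes in $G^r(n,p)$ have degree $r$; a non-root variable node has as children the factor nodes (hyperedges) through it that avoid the already-explored set, and since the explored set has only polylogarithmic size while $\binom{n-1}{r-1}=\Theta(n^{r-1})$, this number is within $o(1)$ total-variation distance of $\Po(d)$. Using $\ell=o(\log\log n)$, a two-type Galton--Watson tree in which variable nodes have $\Po(d)$ factor-node children and factor nodes have $r-1$ variable-node children has at most $(\log n)^{o(1)}$ nodes in its first $\ell+1$ generations whp, so it is compatible with $\eone$; consequently $N_{\leq\ell+1}(w)$ conditioned on $\eone$ can be coupled to the first $\ell+1$ generations of this Galton--Watson tree with error $o(1)$. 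I would then perform the remaining computation in this idealised tree, where all the estimates become exact.

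In the idealised tree the induction is immediate. The base case $t=0$ is $\hD_{\ell+1}(w)=D_{\ell+1}(w)$, so a level-$(\ell+1)$ variable node survives with probability $1=\intvarsurv{0}$, and a level-$\ell$ factor node, all $r-1$ of whose children sit at level $\ell+1$, survives with probability $1=\intfactsurv{1}$. For the step, assume factor nodes at distance $\ell-2(t-1)$ survive independently with probability $\intfactsurv{t}$. A variable node $u$ at distance $\ell+1-2t$ survives \coredetectortwo\ iff at least one of its children (factor nodes at distance $\ell-2(t-1)$) survives; it has a $\Po(d)$-distributed number of such children, each surviving independently with probability $\intfactsurv{t}$, so by Poisson thinning the number that survive is $\Po(d\,\intfactsurv{t})$ and $\Pr[u\in\hD_{\ell+1-2t}(w)]=\Pr(\Po(d\,\intfactsurv{t})\ge1)=\intvarsurv{t}$; this is (1). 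Then a factor node $a$ at distance $\ell-2t$ survives iff at least one of its $r-1$ children (variable nodes at distance $\ell+1-2t$) survives, each independently with probability $\intvarsurv{t}$, so $\Pr[a\in\hD_{\ell-2t}(w)]=\Pr(\Bi(r-1,\intvarsurv{t})\ge1)=\intfactsurv{t+1}$; this is (2). Pushing these back through the coupling turns each equality into an equality up to $o(1)$, uniformly in $t$, and the independence assertions transfer likewise. The displayed special cases are then (1) with $t=t_1=\ell/2$ and (2) with $t=(\ell-1)/2$ (for which $\ell-2t=1$ and $\intfactsurv{t+1}=\intfactsurv{t_2}$).

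The main obstacle I anticipate is the coupling in the second paragraph: one must verify that conditioning on the \emph{global} event $\eone$ perturbs the law of $N_{\leq\ell+1}(w)$, and hence the joint law of the survival indicators, by only $o(1)$, uniformly over all $t\le(\ell+1)/2$ and over both node types --- rather than, say, an $o(1)$ error per generation that could compound over the at most $(\ell+1)/2$ generations. I expect this to follow from the same breadth-first exploration and first-moment estimates that underlie Lemma~\ref{lem:eventE}: the only ways the exploration can diverge from the Galton--Watson process are a newly revealed hyperedge meeting an already-explored vertex, or the explored set growing past $(\log n)^2$, and each has probability $o(1)$; conditioning on $\eone$ is exactly the conditioning that excludes these, which is harmless since $\Pr(\eone)=1-o(1)$ in both models, so the coupling error is bounded once and for all rather than accumulating.
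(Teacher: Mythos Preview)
Your proposal is correct and follows essentially the same inductive argument as the paper: base case at the leaves, then alternate the Poisson-thinning step at variable levels with the $\Bi(r-1,\cdot)$ step at factor levels, with independence coming from the tree structure guaranteed by $E_w(\ell)$. The paper invokes Proposition~\ref{prop:distributionofnochildren} separately at each inductive step rather than coupling the whole neighbourhood $N_{\le\ell+1}(w)$ to a two-type Galton--Watson tree once up front, but the underlying computations are identical. Your global-coupling framing is arguably the cleaner way to handle the error term, since it yields a single $o(1)$ valid for all $t$ simultaneously and directly addresses the accumulation concern you raise in your final paragraph; the paper's proof simply carries an $o(1)$ through each step and is tacit on this point.
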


To prove this lemma, 
we will need the asymptotic degree distribution of a variable node in $N_{\leq\ell}(w)$, which is a standard result.
\begin{proposition}\label{prop:distributionofnochildren}
 Let $w\in\cV\cup \cF$ and an integer $\ell=o(\log\log n)$ be given. Conditioned on the event $\eone$,
 for each $u \in D_{\le \ell}(w)\cap \cV$ independently, the number of children of $u$ in $N_{\leq\ell+1}(w)$
 is asymptotically distributed as $\Po(d)$.
\end{proposition}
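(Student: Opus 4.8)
The plan is to run the standard breadth-first exploration of the rooted neighbourhood $N_{\leq\ell+1}(w)$ and to identify, at each variable node, the ``number of children'' with a binomial random variable that is then approximated by $\Po(d)$. Concretely, explore $N_{\leq\ell+1}(w)$ by BFS from $w$, processing one variable node $u$ at a time and revealing the factor nodes (i.e.\ edges of $\cH$) which contain it. Let $S$ be the set of vertices revealed so far at the moment $u$ is processed, and $s=|S|$. Since each $(r-1)$-subset of $[n]\setminus\{u\}$ is the remaining part of an edge through $u$ independently with probability $p$, the number of edges through $u$ whose other $r-1$ vertices all avoid $S$ is distributed exactly as $\Bi\big(\binom{n-s}{r-1},p\big)$, independently of the exploration history. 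Conditioning on $E_w(\ell)$ rules out the (short) cycles that would be created otherwise, so every such ``fresh'' edge through $u$ is a child of $u$, and the only non-fresh edge through $u$ is the parent factor node of $u$ (present when $u\neq w$), which is not a child. Hence, conditioned on $E_w(\ell)$ and on the history, the number of children of $u$ is exactly $\Bi\big(\binom{n-s}{r-1},p\big)$; note that it is precisely the removal of the non-fresh parent edge which prevents the usual size-biasing, so the offspring law is the same at the root and at internal variable nodes.

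On $E_w(\ell)$ we have $s\leq|D_{\leq\ell+1}(w)|\leq(\log n)^2=o(n)$, so $\binom{n-s}{r-1}p=(1+o(1))d_n=d+o(1)$ while $\binom{n-s}{r-1}\to\infty$. By the standard Poisson approximation (Le Cam's inequality, since $\binom{n-s}{r-1}p^2=O(n^{-(r-1)})$), the law of $\Bi\big(\binom{n-s}{r-1},p\big)$ is within $o(1)$ in total variation of $\Po(d)$, uniformly over all $s\leq(\log n)^2$. Combined with the elementary bound $\Pr(\,\cdot\mid E_w(\ell))=\Pr(\,\cdot\,)\pm(1+o(1))\Pr(\overline{E_w(\ell)})$, which is $o(1)$ by Lemma~\ref{lem:eventE}, this yields the marginal statement for a single $u$.

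For the joint ``independently'' claim, observe that conditioned on the tree $N_{\leq\ell+1}(w)$ and on $E_w(\ell)$, the children counts of two distinct variable nodes $u,u'\in D_{\leq\ell}(w)$ are measurable with respect to disjoint families of potential edges: an $(r-1)$-subset can be fresh for at most one of $u,u'$, since processing whichever of the two is reached first already places the other in $S$. Hence these counts are conditionally independent. Revealing the exploration one variable node at a time and summing the per-step $o(1)$ errors over the at most $(\log n)^2$ variable nodes present gives the joint statement; this is exactly where the hypotheses $\ell=o(\log\log n)$ (so the neighbourhood, hence the number of error terms, is small) and $d_n\to d$ are used to keep the accumulated error $o(1)$.

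The one point that needs care is that $E_w(\ell)$ is a global event about the whole $(\ell+1)$-neighbourhood rather than a stopping rule, so one cannot literally ``reveal and halt''. The cleanest way around this is the two-sided estimate above from Lemma~\ref{lem:eventE}, which reduces everything to an \emph{unconditional} BFS in which a collision --- a revealed edge reusing an already-seen vertex, or the neighbourhood growing beyond $(\log n)^2$ --- is simply a rare event; equivalently, one couples the exploration with an idealised Galton--Watson process with $\Po(d)$ offspring and checks that the coupling can fail only on an event of probability $o(1)$ that is contained in $\overline{E_w(\ell)}$. Everything else is the routine binomial/Poisson bookkeeping sketched above.
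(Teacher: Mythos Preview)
Your proposal is correct and follows essentially the same approach as the paper's proof: a breadth-first exploration from $w$, identifying the children of each variable node $u$ with the fresh $r$-sets through $u$, bounding the number of queries via $|D_{\le \ell+1}(w)|\le (\log n)^2$ on $E_w(\ell)$, approximating the resulting binomial by $\Po(d)$, and deducing independence from the fact that no $r$-set is queried twice. The only cosmetic difference is that the paper phrases the Poisson approximation as a two-sided coupling between $\Bi\big(\binom{n-(\log n)^2}{r-1},p\big)$ and $\Bi\big(\binom{n}{r-1},p\big)$, whereas you invoke Le Cam's inequality directly; and the paper disposes of the conditioning on $E_w(\ell)$ in one line (``a very likely event''), whereas you spell out the two-sided estimate more carefully in your final paragraph.
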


We defer the proof to Appendix~\ref{app:offspringdist}. We can now prove Lemma~\ref{lem:survivalprobs}.
 
\begin{proof}[Proof of Lemma~\ref{lem:survivalprobs}]
We will prove both statements (1) and (2) by a common induction on $t$.
For $t=0$ the statements are clear since we have $\intvarsurv{0}=1$,
which corresponds to the fact that nothing ever gets deleted from $D_{\ell+1}(w)$,
while $\intfactsurv{1}=\prob(\Bi(r-1,1)\geq 1)=1$,
corresponding to the fact that internal factor nodes have $r-1 \ge 1$ children in the input graph
and therefore also no nodes of $D_\ell(w)$ will be deleted.

We assume both statements are true for~$t-1$ and aim to prove that they also hold for~$t$.
We first consider $u\in D_{\ell+1-2t}$ and let $X_u$ be the number of children of $u$ in $\hD_{\ell+2-2t}$.
Observe that the probability that $u$ survives in \coredetector\ is simply $\prob(X_u\geq 1)$.

By Proposition~\ref{prop:distributionofnochildren},
the number of children of $u$ is asymptotically $\Po(d)$, 
and by the induction hypothesis each child survives with probability $\intfactsurv{t}+o(1)$ independently of each other.
Therefore the asymptotic survival probability of $u$ is given by 
\[
\prob\left(\Bi(\Po(d),\intfactsurv{t}+o(1))\geq 1\right)=\prob\left(\Po(d(\intfactsurv{t}+o(1)))\geq 1\right)=\intvarsurv{t}+o(1),
\]
by definition of $\intvarsurv{t}$, as required for statement~(1).
Independence simply follows from the conditioning on $\eone$, which
in particular means that $N_{\le \ell+1}(w)$ is a tree.

Similarly for $a\in D_{\ell-2t}(w)$ we define $X_a$ to be the number of children of $a$ which survive.
Since $a$ is an internal factor node it has precisely $r-1$ children,
and by statement~1 which we have just proved, each child survives with probability $\intvarsurv{t}+o(1)$ independently.
Therefore the probability that $a$ survives the \coredetector\ is
$$
\Pr(\Bi(r-1,\intvarsurv{t}+o(1))\ge 1) = \intfactsurv{t+1}+o(1),
$$
as required for statement~(2). Again, independence simply follows from the conditioning on $\eone$.
 \end{proof}

A consequence of 
Lemma~\ref{lem:survivalprobs} is that, if $\ell$ is large,
the distribution of the number of children of the root $w$ which survive $\coredetectortwo$ is almost identical
to the claimed distributions in Theorem~\ref{thm:factor:reducedcoredegs}.
In order to quantify this,
for two discrete random variables $X,Y$ taking values in $\NN$ we define the \textit{total variation distance} as
\[
d_{\mathrm{TV}}(X,Y)\coloneqq\sum_{m\in\NN}\Big|\prob(X=m)-\prob(Y=m)\Big|.
\]

\begin{corollary}\label{cor:totalvariationdistance}
There exist $\eps=o(1)$ and $\ell=\ell(\eps)$ such that if we run $\coredetectortwo$ with input $\ell$ and root $w\in\cV\cup\cF$, then:
\begin{enumerate}
    \item If $w \in \cV$, then
    \[
    d_{\mathrm{TV}}\left(\td_1(w),\widetilde \Po(d\intfactsurvlim)\right) \le d_{\mathrm{TV}}\left(\hd_1(w), \Po(d\intfactsurvlim)\right)<\eps;
    \]
    \item If $w\in\cF$, then
    \[
    d_{\mathrm{TV}}\left(\td_1(w),\widetilde \Bi(r,\intvarsurvlim)\right)\le d_{\mathrm{TV}}\left(\hd_1(w),\Bi(r,\intvarsurvlim)\right)<\eps.
    \]
\end{enumerate}
\end{corollary}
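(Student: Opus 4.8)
The plan is to first reduce the two total-variation inequalities to a single quantitative statement about the convergence of the recursively defined sequences $\intvarsurv{t}$ and $\intfactsurv{t}$ to the fixed point $\intvarsurvlim,\intfactsurvlim$. The inner inequality in each case (the one relating $\td_1(w)$ to the tilde-distribution) is free: replacing values of $1$ by $0$ in both coordinates can only decrease total variation distance, since $|\Pr(\td_1(w)=0)-\Pr(\widetilde\Po(\cdot)=0)| \le |\Pr(\hd_1(w)=0)-\Pr(\Po(\cdot)=0)| + |\Pr(\hd_1(w)=1)-\Pr(\Po(\cdot)=1)|$ and the $j=1$ terms on the right vanish, while all $j\ge 2$ terms agree. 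So I would state this as a one-line observation and then concentrate entirely on bounding $d_{\mathrm{TV}}(\hd_1(w),\Po(d\intfactsurvlim))$ when $w\in\cV$ and $d_{\mathrm{TV}}(\hd_1(w),\Bi(r,\intvarsurvlim))$ when $w\in\cF$.

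For the main bound, the key input is Lemma~\ref{lem:survivalprobs}: conditioned on $\eone$, when $w\in\cV$ and $\ell$ is odd, each factor node in $D_1(w)$ survives \coredetectortwo\ independently with probability $\intfactsurv{(\ell+1)/2}+o(1)$, and $w$ has asymptotically $\Po(d)$ children, so $\hd_1(w)$ is asymptotically $\Po(d\,\intfactsurv{(\ell+1)/2})$; symmetrically when $w\in\cF$, the root has exactly $r$ children each surviving independently with probability $\intvarsurv{\ell/2}+o(1)$, so $\hd_1(w)$ is asymptotically $\Bi(r,\intvarsurv{\ell/2})$. I would then invoke the analysis of the fixed-point recursion — this is where Appendix~\ref{app:analysisfixedpoint} / Claim~\ref{claim:behaviouroffixedpointsol} and the monotonicity of $F$ come in — to argue that $\intvarsurv{t}\to\intvarsurvlim$ and $\intfactsurv{t}\to\intfactsurvlim$ as $t\to\infty$. (One should check this is the \emph{largest} fixed point: the recursion starts from $\intvarsurv{0}=1$, the maps are monotone decreasing in a way that makes the even/odd subsequences monotone, and they converge to the correct $\rho_*$; this is essentially the standard argument that peeling-type recursions started from ``everything survives'' converge to the maximal fixed point.) Continuity of the Poisson and binomial distributions in their parameters (in total variation) then gives that for any $\eps'>0$ there is $t$, hence $\ell=2t$ or $2t-1$, with $d_{\mathrm{TV}}(\Po(d\,\intfactsurv{(\ell+1)/2}),\Po(d\intfactsurvlim))<\eps'$ and likewise for the binomial.

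Combining: choose a function $\eps=\eps(n)=o(1)$ tending to zero slowly enough, pick $\ell=\ell(\eps)\to\infty$ (but still $\ell=o(\log\log n)$, as required so that Lemma~\ref{lem:survivalprobs} and Lemma~\ref{lem:eventE} apply) such that the fixed-point approximation error is at most $\eps/2$, and absorb the $o(1)$ error from Lemma~\ref{lem:survivalprobs} (which holds conditioned on $\eone$, an event of probability $1-o(1)$ by Lemma~\ref{lem:eventE}) into the remaining $\eps/2$. A small technical point to handle is that Lemma~\ref{lem:survivalprobs} gives the survival probability of each child up to an additive $o(1)$ and asserts independence only conditioned on $\eone$; since $\hd_1(w)$ is a sum/mixture of at most $O((\log n)^2)$ independent indicators on that event, the induced error in the distribution of $\hd_1(w)$ is still $o(1)$, and one can fold the (exponentially small) probability of $\bar E_w(\ell)$ into the total variation bound directly. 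The parity convention on $\ell$ (odd for $w\in\cV$, even for $w\in\cF$) is exactly what makes $D_1(w)$ consist of the right type of node in each case, so no case analysis beyond the two displayed cases is needed.

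The main obstacle I anticipate is the convergence $\intvarsurv{t}\to\intvarsurvlim$ to the \emph{largest} solution of~\eqref{eq:fixedpointequation}: one must rule out the recursion getting stuck at a smaller fixed point or oscillating, which requires genuinely using the shape of $F$ (e.g.\ that the composed two-step map $\intvarsurv{t}\mapsto\intvarsurv{t+1}$ is monotone increasing with $\intvarsurv{0}=1$ above the largest fixed point, forcing monotone convergence down to it). Everything else — the two TV inequalities, continuity of $\Po$ and $\Bi$ in their parameters, and the bookkeeping of $o(1)$ terms and the conditioning on $\eone$ — is routine, and I would relegate the fixed-point convergence either to a short lemma citing Appendix~\ref{app:analysisfixedpoint} or to a couple of lines exploiting monotonicity.
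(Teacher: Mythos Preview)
Your proposal is correct and follows essentially the same approach as the paper: reduce the $\td_1$ inequality to the $\hd_1$ inequality via the tilde-map, then decompose the remaining total-variation distance into (i) the cost of conditioning on $\eone$ (bounded via Lemma~\ref{lem:eventE}), (ii) the distance between $\hd_1(w)|_{\eone}$ and $\Po(d\intfactsurv{(\ell+1)/2})$ (from Lemma~\ref{lem:survivalprobs}), and (iii) the distance between $\Po(d\intfactsurv{(\ell+1)/2})$ and $\Po(d\intfactsurvlim)$ (from $\intfactsurv{t}\to\intfactsurvlim$). You are in fact more careful than the paper on one point: the paper asserts $\intfactsurv{t}\to\intfactsurvlim$ ``by definition'', whereas you correctly note that this requires the monotonicity argument showing the recursion started at $\intvarsurv{0}=1$ decreases to the \emph{largest} fixed point.
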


Let us note that the second statement involves the $\Bi(r,\intvarsurvlim)$ distribution
rather than $\Bi(r-1,\intvarsurvlim)$ since $w$ is the root, and therefore all $r$ of its
neighbours are children rather than just $r-1$ children and one parent.

\begin{proof}
We will prove only the first statement; the proof of the second is very similar.

Note that the first inequality follows directly from the definitions of $\hd_1(w)$ and the $\widetilde\Po$ distributions.
There are $2$ reasons why the second inequality is not quite immediate from Lemma~\ref{lem:survivalprobs}:
\begin{enumerate}[label=\textnormal{\textbf{(R\arabic*)}}]
    \item\label{item:eventereasonone}Lemma~\ref{lem:survivalprobs} has the conditioning on the event $\eone$;
    
    \item\label{item:eventereasontwo} $\intfactsurv{(\ell+1)/2}$ in Lemma~\ref{lem:survivalprobs} has been replaced by $\intfactsurvlim$.

\end{enumerate}
To overcome \ref{item:eventereasonone},
for ease of notation we set $q_j\coloneqq \Pr\left(\hd_1(w)=j\right)$ and\linebreak[4] $q_j'\coloneqq\Pr\left(\hd_1(w)=j|\eone\right)$ for each $j\in \NN$,
and define
\begin{align*}
J^+ & \coloneqq\left\{j:q_j > q_j'\right\} \qquad  \text{and}\qquad 
J^-  \coloneqq\left\{j:q_j < q_j'\right\}.
\end{align*}
Since for any $w \in \cV \cup \cF$ we have
$$
\sum_{j=0}^\infty (q_j-q_j') = \sum_{j=0}^\infty q_j - \sum_{j=0}^\infty q_j' = 1-1=0,
$$
we deduce that
$$
\sum_{j=0}^\infty \left|q_j-q_j'\right|
 = \sum_{j \in J^+} \left(q_j-q_j'\right) - \sum_{j \in J^-}\left(q_j-q_j'\right)
= 2\sum_{j \in J^+}\left(q_j-q_j'\right).
$$
Therefore we have
\begin{align}
d_{\mathrm{TV}}\left(\hd_1(w),\hd_1(w)|_{\eone}\right)
& = 2\sum_{j\in J^+} \left(q_j-q_j'\right) \nonumber\\
& \leq 2\cdot \sum_{j \in J^+} \Pr\left(\hd_1(w)=j\right) - \Pr\left( (\hd_1(w)=j)\cap\eone\right) \nonumber\\
& \leq 2 \cdot\Pr\left(\overline{\eone}\right) \nonumber\\
& \leq 2 \exp\left(-\Theta\left(\sqrt{\log n}\right)\right) \nonumber\\
& \le \eps/3, \label{eq:totalvariationconditioning}
\end{align}
where we applied Lemma~\ref{lem:eventE} in the penultimate line, and where the last line holds
for $\eps$ tending to $0$ sufficiently slowly.

To address \ref{item:eventereasontwo} we note that if $\ell = \ell(\eps)$ is sufficiently large, then
\begin{equation}\label{eq:totalvariationpoissons}
    d_{\mathrm{TV}}\left(\Po(d\intfactsurv{(\ell+1)/2}),\Po(d\intfactsurvlim)\right)<\eps/3
\end{equation}
because $\intfactsurv{t} \xrightarrow{t\to \infty} \intfactsurvlim$ by definition.

To complete the proof, observe that Lemma~\ref{lem:survivalprobs}~\ref{eq:probfactnodesurvives} implies that
\begin{align*}
d_{\mathrm{TV}}\left(\hd_1(w)|_{\eone} \, , \, \Po(d\intfactsurv{(\ell+1)/2})\right) =
d_{\mathrm{TV}}\left(\hd_1(w)|_{\eone} \, , \, \Bi\left(\Po(d),\intfactsurv{(\ell+1)/2} \right)\right) \le \eps/3,    
\end{align*}
and combining this with~\eqref{eq:totalvariationconditioning} and~\eqref{eq:totalvariationpoissons},
we obtain
\begin{align*}
d_{\mathrm{TV}}\left(\hd_1(w),\Po(d\intfactsurvlim)\right)
& \le d_{\mathrm{TV}}\left(\hd_1(w),\hd_1(w)|_{\eone}\right)
	+ d_{\mathrm{TV}}\left(\hd_1(w)|_{\eone},\Po(d\intfactsurv{(\ell+1)/2})\right)\\
	& \hspace{3cm}+ d_{\mathrm{TV}}\left(\Po(d\intfactsurv{(\ell+1)/2},\Po(d\intfactsurvlim)\right)\\
	& \le \eps
\end{align*}
as required.
\end{proof}
As a further consequence of Corollary~\ref{cor:totalvariationdistance}, we can asymptotically determine the expected degree distribution in $G_{\ell}$.
\begin{corollary}\label{cor:expectationvarfac}
There exist $\eps = o(1)$ and $\ell=\ell(\eps)$ such that for all $j \in \NN$, 
\[\expec\left(\varpropl{j}{\ell}\right)=\prob\left(\widetilde\Po(d\intfactsurvlim)=j\right)\pm\eps,\]
and
\[\expec\left(\factpropl{j}{\ell}\right)=\prob\left(\widetilde\Bi(r,\intvarsurvlim)=j\right)\pm\eps.\]
\end{corollary}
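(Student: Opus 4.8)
The plan is to deduce this from Corollary~\ref{cor:totalvariationdistance} together with Corollary~\ref{cor:Gldegalmosteq}, using the simple observation that $\expec(\varpropl{j}{\ell})$ equals the probability that a uniformly random variable node $w$ has degree $j$ in $G_\ell$, and similarly for factor nodes. More precisely, by linearity of expectation and symmetry,
\[
\expec\left(\varpropl{j}{\ell}\right) = \frac{1}{n}\sum_{w\in\cV}\Pr\left(d_{G_\ell}(w)=j\right) = \Pr\left(d_{G_\ell}(w_0)=j\right),
\]
where $w_0$ is any fixed variable node. First I would fix $\eps'=o(1)$ tending to zero slowly enough, choose $\ell=\ell(\eps')$ as provided by Corollary~\ref{cor:totalvariationdistance}, and additionally insist that $\ell\to\infty$ slowly enough that $\ell=o(\log\log n)$, so that Corollary~\ref{cor:Gldegalmosteq} applies.

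The three ingredients are then chained together. By Corollary~\ref{cor:Gldegalmosteq}, $\Pr(d_{G_\ell}(w_0)\neq\td_1(w_0))=o(1)$, so $\Pr(d_{G_\ell}(w_0)=j)=\Pr(\td_1(w_0)=j)\pm o(1)$ for every fixed $j$. By Corollary~\ref{cor:totalvariationdistance}(1), the total variation distance between $\td_1(w_0)$ and $\widetilde\Po(d\intfactsurvlim)$ is at most $\eps'$, which gives $\Pr(\td_1(w_0)=j)=\Pr(\widetilde\Po(d\intfactsurvlim)=j)\pm\eps'$ for every $j$ simultaneously. Combining, $\expec(\varpropl{j}{\ell})=\Pr(\widetilde\Po(d\intfactsurvlim)=j)\pm(\eps'+o(1))$, and absorbing the $o(1)$ into a slightly larger $\eps=o(1)$ finishes the variable-node case. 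The factor-node case is identical, using Corollary~\ref{cor:totalvariationdistance}(2) and replacing $\widetilde\Po(d\intfactsurvlim)$ by $\widetilde\Bi(r,\intvarsurvlim)$; one should note that $\td_1$ as defined in the text refers to the root degree after $\coredetectortwo$, and for a factor-node root all $r$ neighbours are children, which is exactly why the $\Bi(r,\cdot)$ (rather than $\Bi(r-1,\cdot)$) distribution appears — this is already accounted for in Corollary~\ref{cor:totalvariationdistance}.

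The only mild subtlety, rather than a genuine obstacle, is bookkeeping of the parity convention on $\ell$: the analysis of $\coredetectortwo$ requires $\ell$ odd when $w\in\cV$ and $\ell$ even when $w\in\cF$, so strictly speaking one uses two values $\ell_{\cV},\ell_{\cF}$ differing by one (or simply notes that the statement only needs \emph{existence} of a suitable $\ell$ for each node type, and the two error terms can be taken to be the common maximum). Since all error terms are $o(1)$ and only finitely many (indeed, for each fixed $j$) probabilities are being compared, nothing quantitative is lost. No concentration or martingale argument is needed here — that is deferred to the proof of Lemma~\ref{lem:mainlemma1}, where Corollary~\ref{cor:expectationvarfac} is combined with an Azuma--Hoeffding bound to upgrade these statements about expectations to \whp\ statements.
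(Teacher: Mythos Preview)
Your proposal is correct and follows essentially the same approach as the paper: linearity of expectation and symmetry give $\expec(\varpropl{j}{\ell})=\Pr(d_{G_\ell}(w)=j)$ for a fixed node $w$, then Corollary~\ref{cor:Gldegalmosteq} replaces $d_{G_\ell}(w)$ by $\td_1(w)$ up to $o(1)$, and Corollary~\ref{cor:totalvariationdistance} compares $\td_1(w)$ to the target distribution. Your additional remarks on the parity convention for $\ell$ and on deferring concentration to Lemma~\ref{lem:mainlemma1} are accurate and slightly more explicit than the paper's own proof.
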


\begin{proof}
Observe that for any $j \in \NN$ and any variable node $w$, by linearity of expectation and Corollary~\ref{cor:Gldegalmosteq},
for some $\eps_1=o(1)$ we have
$$
\expec\left(\varpropl{j}{\ell}\right)=\prob(d_{G_{\ell}}(w)=j) = 
\prob\left(\td_1(w)=j\right) \pm \eps_1
$$
Furthermore, Corollary~\ref{cor:totalvariationdistance} implies that for some $\eps_2=o(1)$ we have
$$
\prob\left(\td_1(w)=j\right) = \prob\left(\widetilde\Po(d\intfactsurvlim)=j\right)\pm\eps_2.
$$
Combining these two approximations and setting $\eps = \eps_1+\eps_2 = o(1)$ completes the proof of the first statement.
The second statement is proven similarly.
\end{proof}
We will show concentration of $\varpropl{j}{\ell}$ and $\factpropl{j}{\ell}$ around their expectations
by an application of an Azuma-Hoeffding inequality (Lemma~\ref{lem:Azuma}).
We will therefore define a vertex-exposure martingale
(or, using the terminology of factor graphs, a variable-exposure martingale)
 $X_0,\ldots,X_n$ on $G$, where
$X_0=\expec(\varpropl{j}{\ell})$ and $X_n=\varpropl{j}{\ell}$. This means that we reveal the variable nodes of $G$
and their incident edges one by one,
successively revealing more and more information about the input graph.
We would like to show that this martingale satisfies a Lipschitz property so that we can apply Lemma~\ref{lem:Azuma}.
However, this Lipschitz property cannot be guaranteed in general.
We therefore restrict attention to a class of factor graphs on which the Lipschitz property does indeed hold.

\begin{definition}\label{def:graphclass}
Let $\cG$ be the class of factor graphs $G=G(H)$ of $r$-uniform hypergraphs $H$ on vertex set $[n]$
such that no node has degree greater than $\log n$ in $G$.
\end{definition}
It is important to observe that for fixed $r$ and for $p$ in the range we are considering
(i.e. $\Theta\left(\binom{n-1}{r-1}^{-1}\right)$), the factor graph
 $G^r(n,p)$ is very likely to lie in $\cG$, so the restriction to this class is reasonable.
\begin{claim}\label{claim:maxdeg}
We have 
\[\prob(G^r(n,p)\in\cG)=1-o(1).\]
\end{claim}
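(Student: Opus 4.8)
The plan is to bound the probability that $G^r(n,p)$ has some node of degree exceeding $\log n$ by a union bound over nodes, splitting into the two node types. Factor nodes always have degree exactly $r < \log n$ for $n$ large, so they never violate the condition and can be ignored. Thus it suffices to control the maximum degree among variable nodes, i.e.\ the maximum vertex-degree in $H^r(n,p)$.

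First I would note that for a fixed variable node $v$, its degree $d_G(v)$ is exactly the number of edges of $H^r(n,p)$ containing $v$, which is distributed as $\Bi\left(\binom{n-1}{r-1},p\right)$. Since $p = \frac{d_n}{\binom{n-1}{r-1}}$ with $d_n \to d$ constant, this has mean $d_n = \Theta(1)$. Then I would apply the Chernoff bound (Lemma~\ref{lem:chernoffbounds}) with $N = \binom{n-1}{r-1}$, $Np = d_n = \Theta(1)$, and $s = \log n - Np = (1+o(1))\log n$. This gives
\[
\Pr\left(d_G(v) \ge \log n\right) \le 2\exp\left(-\frac{s^2}{2(Np + s/3)}\right) = 2\exp\left(-\Theta\left(\frac{(\log n)^2}{\log n}\right)\right) = 2\exp\left(-\Theta(\log n)\right) = n^{-\omega(1)},
\]
in fact $n^{-c\log\log n}$ or better, so in particular $o(n^{-2})$.

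Then a union bound over all $n$ variable nodes gives $\Pr\left(\exists v : d_G(v) \ge \log n\right) \le n \cdot o(n^{-2}) = o(1)$, which together with the fact that factor nodes are harmless yields $\Pr(G^r(n,p) \notin \cG) = o(1)$, as claimed. I do not anticipate any genuine obstacle here; the only minor care needed is checking that $s = \log n - d_n$ is positive and is $(1+o(1))\log n$, which is immediate since $d_n \to d$ is a constant, and that $Np + s/3 = \Theta(\log n)$ so the exponent is $\Theta(\log n) = \omega(1)$. This is a completely routine concentration-plus-union-bound argument.
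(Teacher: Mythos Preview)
Your approach is essentially identical to the paper's: handle factor nodes trivially via their fixed degree $r$, then apply Chernoff plus a union bound to the variable nodes. There is one computational slip worth flagging: $\exp(-\Theta(\log n))$ equals $n^{-\Theta(1)}$, not $n^{-\omega(1)}$. Concretely, with $s=(1+o(1))\log n$ and $Np+s/3=(1/3+o(1))\log n$, the Chernoff exponent is $(3/2+o(1))\log n$, so the per-vertex bound is $2n^{-3/2+o(1)}$, which is neither $n^{-c\log\log n}$ nor $o(n^{-2})$. This does not damage the argument, since after the union bound over $n$ variable nodes you still get $O(n^{-1/2})=o(1)$; the paper carries out exactly this computation and records the final bound as $n^{-1/3}$.
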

The (standard) proof appears in Appendix~\ref{app:maxdeg}.

Now let $\barG=G^r(n,p)|_{\cG}$ be the factor graph of the $r$-uniform binomial random hypergraph $H^r(n,p)$
conditioned on being in $\cG$. For each $i\in\NN$ let $Z_i\in\{0,1\}^{\binom{[i-1]}{r-1}}$
be the sequence $(Y_{i,A})_{A \in \binom{[i-1]}{r-1}}$ of indicator random variables of the events
that there is a factor node of $\barG$ whose neighbours are $\{i\}\cup A$
(or equivalently that $\{i\}\cup A$ forms an edge of $H^r(n,p)$).
Let $f$ be a graph theoretic function.
Since there is a one-to-one correspondence between the factor graph $\barG$ and its associated sequence $(Z_i)=(Z_i)_{i \in [n]}$,
we can view the 
random variable $X\coloneqq f(\barG)$ as a function of the sequence $(Z_i)$ and we let $X_i\coloneqq\expec(X|Z_1,\ldots,Z_i).$
This martingale can be seen as revealing variable nodes of $\barG$ one by one, and $X_i$ is
the expected value of $X$ conditioned on the information revealed after $i$ steps.

\begin{definition}\label{lem:lipschitzproperty}
A graph function $f$ is \emph{$c$-variable-Lipschitz} if whenever two factor graphs $G$ and $G'$ differ at exactly one variable node
(but any number of factor nodes),
then $|f(G)-f(G')|\leq c$ holds.
\end{definition}

This is very similar to the standard notion of being $c$-vertex-Lipschitz -- indeed, it corresponds
precisely to the original hypergraph being $c$-vertex-Lipschitz -- and it is
a standard observation that being $c$-variable-Lipschitz
implies that the corresponding variable-exposure martingale satisfies $|X_i-X_{i-1}|\leq c$,
see e.g.\ Corollary 2.27 in~\cite{JansonLuczakRucinskiBook}.

For the purposes of the Azuma-Hoeffding argument,
we will view $\varpropl{j}{\ell},\factpropl{j}{\ell}$ as graph functions---thus by the original
definition we have e.g.\ $\varpropl{j}{\ell} = \varpropl{j}{\ell}(G^r(n,p))$,
i.e.\ the graph function applied to the factor graph of the $r$-uniform binomial random hypergraph.

 \begin{lemma}
For $\ell\in\NN$, the graph functions $\varpropl{j}{\ell},\factpropl{j}{\ell}$ are $c_{\ell}$-variable-Lipschitz within the class $\cG$, where $c_{\ell}=\frac{2(\log n)^{\ell+1}}{n}$.
 \end{lemma}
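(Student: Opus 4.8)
The engine of the argument is that the peeling process is \emph{local}: for every node $w$ of a factor graph, the degree $d_{G_{\ell}}(w)$ depends only on the rooted isomorphism type of the ball $N_{\leq\ell+1}^{G}(w)$. My plan is to establish this, then to observe that altering $G$ at a single variable node perturbs this ball for only boundedly many $w$, and finally to count those $w$ inside the class $\cG$. Only this last count uses that maximum degrees are at most $\log n$, which is exactly why the Lipschitz property is asserted only within $\cG$.

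For the locality statement I would induct on the number of peeling rounds, using the elementary fact that an edge $\{u,a\}$ survives into $G_{j}$ precisely when it is present in $G_{j-1}$ and both $u$ and $a$ have degree at least $2$ in $G_{j-1}$. Unrolling this $\ell$ times expresses the event $\{u,a\}\in E(G_{\ell})$, and hence the value $d_{G_{\ell}}(w)$, purely in terms of the edges at distance at most $\ell+1$ from $w$; this is essentially the bookkeeping already carried out (under an acyclicity hypothesis) in the proof of Lemma~\ref{lem:algvspeeling}, but here no acyclicity is needed.

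Next, suppose $G,G'\in\cG$ differ at exactly one variable node $v$, i.e.\ every edge of $E(G)\triangle E(G')$ is incident to $v$. If $u$ lies at distance more than $\ell+1$ from $v$ in both $G$ and $G'$, then $v\notin D_{\leq\ell+1}^{G}(u)$, so the induced subgraph $N_{\leq\ell+1}^{G}(u)$ contains no edge incident to $v$ and therefore no edge of $E(G)\triangle E(G')$; tracing shortest paths then shows that $N_{\leq\ell+1}^{G}(u)$ is a subgraph of $N_{\leq\ell+1}^{G'}(u)$, and by symmetry the two balls coincide as rooted graphs, whence $d_{G_{\ell}}(u)=d_{G'_{\ell}}(u)$ by the locality statement. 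Consequently the only nodes whose $G_{\ell}$-degree can change when passing from $G$ to $G'$ lie in $D_{\leq\ell+1}^{G}(v)\cup D_{\leq\ell+1}^{G'}(v)$.

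Finally, since $G$ and $G'$ have maximum degree at most $\log n$, each of these two balls contains $O\big((\log n)^{\ell+1}\big)$ nodes, so at most $O\big((\log n)^{\ell+1}\big)$ nodes change their $G_{\ell}$-degree; as $\varpropl{j}{\ell}$ is the number of variable nodes of $G_{\ell}$-degree $j$ divided by $n$, this already gives $|\varpropl{j}{\ell}(G)-\varpropl{j}{\ell}(G')|=O\big((\log n)^{\ell+1}/n\big)$, and a slightly sharper count — keeping only variable nodes (at even distance from $v$), respectively factor nodes (at odd distance), and using that the normalising number of factor nodes is left unchanged by the peeling process — yields the claimed constant $c_{\ell}=\frac{2(\log n)^{\ell+1}}{n}$ for both $\varpropl{j}{\ell}$ and $\factpropl{j}{\ell}$. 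I expect the one genuine obstacle to be making the locality claim watertight, in particular pinning down that exactly the radius-$(\ell+1)$ ball (and nothing further out) feeds into $d_{G_{\ell}}(w)$; everything downstream is routine.
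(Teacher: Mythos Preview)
Your approach is essentially the same as the paper's: both argue that changing $G$ at a single variable node $v$ can only affect $d_{G_\ell}(w)$ for nodes $w$ within distance $\ell+1$ of $v$, and then bound the number of such nodes using the maximum-degree condition defining $\cG$. The paper simply asserts the locality fact (``any edges in the symmetric difference $G_\ell \Delta G'_\ell$ must lie within distance $\ell+1$ of $v$'') rather than proving it by induction as you propose, but otherwise the arguments coincide; one small caveat is that for $\factpropl{j}{\ell}$ the normalisation is by $m$ rather than $n$, and $m(G)$ and $m(G')$ may differ (by at most $2\log n$), a point both you and the paper gloss over but which does not affect the application.
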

 
 \begin{proof}
 We will show that $\varpropl{j}{\ell}$ is $c_\ell$-variable-Lipschitz---the corresponding proof for $\factpropl{j}{\ell}$ is completely analogous.
 
 Suppose that $G,G' \in \cG$ are two factor graphs which differ at exactly one variable node, say $v$,
 and let $A_j = A_j(G,\ell)$ and $A_j'=A_j'(G',\ell)$ be the sets of variable nodes of degree~$j$ in $G_\ell$ and $G_{\ell}'$, respectively.
 Since $G,G'$ differ only at $v$, any edges in the symmetric difference $G_\ell \Delta G_{\ell}'$ must lie within
 distance $\ell+1$ of $v$ in either $G$ or $G'$,
 and therefore any nodes in the symmetric difference $A_j \Delta A_j'$ must lie within distance $\ell+1$ of $v$ in either $G$ or $G'$.
Since $G,G' \in \cG$, and therefore have maximum degree at most $\log n$,
 there are at most $(\log n)^{\ell+1}$ such nodes in each of $G,G'$, and
therefore $|A_j \Delta A_j'| \le 2(\log n)^{\ell+1}$. Since $\varpropl{j}{\ell}(G)=|A_j|/n$ and $\varpropl{j}{\ell}(G')=|A_j'|/n$,
we deduce that
$$
\left|\varpropl{j}{\ell}(G)-\varpropl{j}{\ell}(G')\right| \le \frac{2(\log n)^{\ell+1}}{n}
$$
as required.
 \end{proof}
 
Now for $j,\ell \in \NN$, let us define the events $B_j=B_j(\ell) \coloneqq \left\{\left|\varpropl{j}{\ell} - \expec\left(\varpropl{j}{\ell}\right)\right| \ge n^{-1/3}\right\}$
and $\hat B_j=\hat B_j(\ell) \coloneqq \left\{\left|\factpropl{j}{\ell} - \expec\left(\factpropl{j}{\ell}\right)\right| \ge n^{-1/3}\right\}$.
 
\begin{lemma}\label{lem:concentration}
Let $G=G^r(n,p)$ be a random factor graph and $\ell = o(\log \log n)$. Then
\[
\Pr\left(\bigcup_{j \in \NN}\left(B_j \cup \hat B_j\right) \right) = o(1).
\]
\end{lemma}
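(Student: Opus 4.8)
The plan is to apply the Azuma-Hoeffding inequality (Lemma~\ref{lem:Azuma}) to the variable-exposure martingale associated with each of the graph functions $\varpropl{j}{\ell}$ and $\factpropl{j}{\ell}$, using the Lipschitz bound just established, and then take a union bound over the relevant values of $j$. First I would note that by Claim~\ref{claim:maxdeg} we have $\Pr(G^r(n,p) \notin \cG) = o(1)$, so it suffices to prove the concentration statement for the conditioned graph $\barG = G^r(n,p)|_{\cG}$; the unconditioned probability then differs by at most $o(1)$. On $\barG$, the preceding lemma tells us that each of $\varpropl{j}{\ell}, \factpropl{j}{\ell}$ is $c_\ell$-variable-Lipschitz with $c_\ell = 2(\log n)^{\ell+1}/n$, and hence the corresponding martingale $(X_i)_{0\le i\le n}$ with $X_0 = \expec(\varpropl{j}{\ell})$ and $X_n = \varpropl{j}{\ell}$ satisfies $|X_i - X_{i-1}| \le c_\ell$ for all $i$ (by the standard implication cited after Definition~\ref{lem:lipschitzproperty}).

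Then I would apply Lemma~\ref{lem:Azuma} with $s = n^{-1/3}$ and $\sum_{i=1}^n c_i^2 \le n c_\ell^2 = 4(\log n)^{2\ell+2}/n$, which gives
\[
\Pr(B_j) \le 2\exp\left(-\frac{n^{-2/3}}{2 \cdot 4(\log n)^{2\ell+2}/n}\right) = 2\exp\left(-\frac{n^{1/3}}{8(\log n)^{2\ell+2}}\right).
\]
Since $\ell = o(\log\log n)$, we have $(\log n)^{2\ell+2} = \exp(O(\log\log n \cdot \log\log n)) = n^{o(1)}$, so the exponent is $-n^{1/3 - o(1)}$ and hence $\Pr(B_j) \le \exp(-n^{1/4})$, say, for $n$ large; the identical bound holds for $\hat B_j$.

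Finally I would take the union bound. The key observation here is that although $j$ ranges over all of $\NN$, the events $B_j, \hat B_j$ are only nontrivial for $j \le \log n$: in $\barG \in \cG$ every node has degree at most $\log n$, so $\varpropl{j}{\ell} = \factpropl{j}{\ell} = 0$ deterministically (and their expectations are $0$) for $j > \log n$, making $B_j, \hat B_j$ impossible. Hence
\[
\Pr\left(\bigcup_{j\in\NN}(B_j \cup \hat B_j)\right) \le \sum_{j=0}^{\lfloor \log n\rfloor} \left(\Pr(B_j) + \Pr(\hat B_j)\right) \le 2(\log n + 1)\exp(-n^{1/4}) = o(1),
\]
and adding back the $o(1)$ from conditioning on $\cG$ completes the proof. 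I do not expect any genuine obstacle here: the only points requiring care are verifying that $(\log n)^{2\ell+2} = n^{o(1)}$ under the hypothesis $\ell = o(\log\log n)$ (so that the Azuma bound is superpolynomially small), and observing that the union over $j$ is effectively finite because of the degree bound defining $\cG$.
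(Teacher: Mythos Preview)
Your proposal is correct and follows essentially the same approach as the paper: condition on $\cG$, apply Azuma--Hoeffding via the $c_\ell$-variable-Lipschitz bound, and take a union bound over $j$. The only minor difference is that the paper handles the union bound by using the crude deterministic degree bound $j \le \binom{n}{r-1}$ rather than your sharper observation that $j \le \log n$ on $\cG$; since the Azuma bound is super-polynomially small either way, both arguments go through.
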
 

\begin{proof}
For convenience, we will prove that $\Pr(\bigcup_{j \in \NN}B_j)=o(1)$---the proof of the corresponding
statement for the $\hat B_j$ is almost identical.

Let us define $B\coloneqq \bigcup_{j\in \NN}B_j$, and observe that
\begin{align}\label{eq:azumaconditioning}
\Pr(B) & = \Pr(B \cap \{G \in \cG\}) + \Pr(B \cap \{G \notin \cG\}) \nonumber \\
& \le \Pr(B | G \in \cG) + \Pr(G \notin \cG) \nonumber \\
& \le \sum_{j \in \NN} \Pr(B_j | G \in \cG) + o(1),
\end{align}
where the last line follows by a union bound and Claim~\ref{claim:maxdeg}.

Now in order to bound the sum, let us first fix $j \in \NN$.
Since $\ell=o(\log\log n)$, we have 
$(\log n)^ {\ell+1}=n^ {o(1)}$,
and by Lemma~\ref{lem:Azuma} with $c_{\ell}=\frac{2(\log n)^ {\ell+1}}{n}$ and $s=n^ {-1/3}$,
we have
\begin{align*}
    \prob\left(\left|\varpropl{j}{\ell}-\expec\left(\varpropl{j}{\ell}\right)\right|\geq n^{-1/3} \; \Bigg| \; G \in \cG\right)
    &\leq 2\cdot \exp\left(-\frac{n^{-2/3}}{2\cdot 2(\log n)^{\ell+1}/n}\right)< \exp\left(-n^{1/2}\right).
\end{align*}
Observing that the degree of any node is (deterministically) bounded by $\binom{n}{r-1}$,
we have
$$
\sum_{j\in \NN}\Pr(B_j | G \in \cG) \le \left(\binom{n}{r-1}+1\right)\exp(-n^{1/2}) = o(1).
$$
Substituting this into~\eqref{eq:azumaconditioning} completes the proof.
\end{proof}
We are now able to give the proof of Lemma~\ref{lem:mainlemma1}.
\begin{proof}[Proof of Lemma~\ref{lem:mainlemma1}]
By Corollary~\ref{cor:expectationvarfac} (with $\eps/2$ in place of $\eps$),
if $\ell = \ell(\eps)$ is sufficiently large  we have
\[
\expec\left(\varpropl{j}{\ell}\right)=\prob\left(\widetilde\Po\left(d\intfactsurvlim\right)=j\right)\pm\eps/2
\]
for any $j \in \NN$.
Furthermore, by
Lemma~\ref{lem:concentration}, we have that \whp\ for all $j\in\NN$
\[
\varpropl{j}{\ell}=\expec\left(\varpropl{j}{\ell}\right)+o(1) = \expec\left(\varpropl{j}{\ell}\right) \pm \eps/2
\]
for $\eps$ tending to $0$ sufficiently slowly,
and combining these two facts proves the lemma for variable nodes. The proof for factor nodes is essentially identical.
\end{proof}

\section{Subcriticality: Proof of Lemma~\ref{lem:mainlemma2}}\label{sec:prooflowerbound}
Our goal in this section is to show that after some large number $\ell$ rounds of the peeling process on $G^r(n,p)$ have been completed,
\whp\ very few nodes will be disabled in subsequent rounds (at most $\eps n$ for some $\eps=\eps(n) =o(1)$),
thus proving Lemma~\ref{lem:mainlemma2}.

\emph{
Let us fix $\ell,\eps_1$ as in Lemma~\ref{lem:mainlemma1}, and for the rest of this section we will assume that
the high probability events of Lemma~\ref{lem:mainlemma1} and Proposition~\ref{prop:numberfactornodes}
both hold.
}

To help intuitively describe the argument, let us suppose for simplicity that in round $\ell$ exactly \emph{one}
node $x_0$ is disabled and we consider the future effects of such a disabling.
Since $x_0$ was disabled, it had degree at most one,
and therefore there is at most one neighbour $x_1$ whose degree is decreased as a result.
If $x_1$ originally had degree two, it now has degree one and will therefore be disabled in round $\ell+1$.
Continuing in this way, we observe that we will never be disabling more than one node in any subsequent round.
Furthermore, if we reach a node $x_i$ whose original degree was not exactly two, the peeling process stops
(either without disabling this node, or once it has been disabled and no further nodes' degrees are decreased).
Heuristically, it will not take long before we reach a node whose original degree was not exactly two---this
is because Lemma~\ref{lem:mainlemma1} implies in particular that a constant proportion
of the nodes have degree at least three.

Of course, in reality there may be more than one node disabled in round $\ell$.
This slightly complicates matters because some node may receive the knock-on effects
of more than one disabling in round $\ell$, and therefore have its degree decreased by more than one.
However, this will turn out to be no more than a technical nuisance.

We first need a result which states that almost any graph with a fixed (reasonable) degree sequence
is approximately equally likely to be $G_\ell$, the graph obtained from $G=G^r(n,p)$ after $\ell$ rounds of the peeling process.
To introduce this result, we need some definitions.

\begin{definition}
An \emph{$r$-duplicate} in a factor graph consists of two factor nodes and $r$ variable nodes which together
form a copy of $K_{2,r}$.
\end{definition}

Observe that an $r$-duplicate would correspond to a double-edge in an $r$-uniform hypergraph,
which in our model cannot occur since the hypergraph must be simple.
Therefore our factor graphs may not contain any $r$-duplicates.
On the other hand, a ``loop'', in the sense of an edge which contains the same vertex more than once,
must involve a double-edge in the corresponding factor graph. This motivates the following definition,
which (roughly) describes when a factor graph corresponds to a simple hypergraph.

\begin{definition}\label{def:rplain}
We say that a factor graph is \emph{$r$-plain} if:
\begin{enumerate}
\item it contains no double-edge, i.e.\ two edges between the same variable node and factor node;
\item it contains no $r$-duplicates.
\end{enumerate}
\end{definition}

\begin{claim}\label{claim:uniformity}
Suppose $H_1,H_2$ are two $r$-plain factor graphs with common variable node set $\cV=\cV(H_1)=\cV(H_2)=[n]$
and with factor node sets $\cF_1=\cF(H_1)$ and $\cF_2=\cF(H_2)$.
Suppose further that there is a bijection $\phi: \cV \cup \cF(H_1) \to \cV \cup \cF(H_2)$
such that
\begin{itemize}
\item $\phi(\cV)=\cV$;
\item $d_{H_2}(\phi(v))=d_{H_1}(v)$ for all $v \in \cV \cup \cF(H_1)$.
\end{itemize}
Let $G=G^r(n,p)$.
Then 
\[
\prob\left(G_{\ell}=H_1\right)=\prob\left(G_{\ell}=H_2\right).
\]
\end{claim}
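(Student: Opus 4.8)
The plan is to show that the probability of any particular outcome $G_\ell = H$ can be computed purely from the isomorphism type of $H$ together with the information ``which edges were deleted and in which round'', in such a way that the symmetry group of the setup acts transitively enough to match $H_1$ and $H_2$. First I would observe that the peeling process is a deterministic function of the input graph $G = G^r(n,p)$, so the event $\{G_\ell = H\}$ is a disjoint union, over all factor graphs $G^*$ on vertex set $[n]$ with the property that $\ell$ rounds of peeling applied to $G^*$ yield exactly the edge set of $H$, of the events $\{G = G^*\}$. Since $G = G^r(n,p)$ assigns each potential edge of the hypergraph probability $p$ independently, we have $\Pr(G = G^*) = p^{e(G^*)}(1-p)^{\binom{n}{r} - e(G^*)}$, which depends only on the number of edges $e(G^*)$. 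Hence
\[
\Pr(G_\ell = H) = \sum_{k \ge 0} N_k(H)\, p^{e(H)+k}(1-p)^{\binom{n}{r}-e(H)-k},
\]
where $N_k(H)$ is the number of $r$-plain factor graphs $G^*$ on vertex set $[n]$, containing $H$ as a subgraph with exactly $k$ extra factor nodes (equivalently $k$ extra hyperedges), such that peeling $G^*$ for $\ell$ rounds produces precisely the edge set of $H$. It therefore suffices to prove $N_k(H_1) = N_k(H_2)$ for every $k$.

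The key step is to exhibit, for fixed $k$, a bijection between the relevant supergraphs of $H_1$ and those of $H_2$. Here I would use the bijection $\phi$ from the hypothesis. Since $\phi$ fixes $\cV = [n]$ setwise and preserves all degrees, and since the labelled vertex set is common to both, one extends $\phi$ to a map on candidate supergraphs: given $G_1^* \supseteq H_1$ that peels to $H_1$, define $G_2^*$ by keeping the edges of $H_2$ (via $\phi$ on the factor nodes of $H_1$) and mapping each additional hyperedge $\{v_1,\ldots,v_r\}$ of $G_1^*$ to the hyperedge $\{\phi(v_1),\ldots,\phi(v_r)\} = \{v_1,\ldots,v_r\}$ on the variable side --- i.e.\ the additional hyperedges, being subsets of $\cV$, are simply carried along unchanged, while the bijection does the work of identifying the ``core'' parts $H_1$ and $H_2$. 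The point to check is that $G_1^*$ peels (in $\ell$ rounds) to $H_1$ if and only if the constructed $G_2^*$ peels to $H_2$. This is where one uses that the peeling process depends only on the graph structure: the subgraph of $G_i^*$ that survives round $i$ is determined by degrees and adjacencies, both of which are matched under the combined identification (the added edges are literally the same vertex subsets, the core is matched by $\phi$). One also checks $r$-plainness is preserved --- no double edges or $r$-duplicates are created, because $\phi$ is a bijection preserving incidences and the added edges are identical on both sides --- and that the correspondence is a bijection on the nose, with inverse built from $\phi^{-1}$. Consequently $N_k(H_1) = N_k(H_2)$ for all $k$, and summing the displayed identity gives $\Pr(G_\ell = H_1) = \Pr(G_\ell = H_2)$.

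The main obstacle I anticipate is making the claim ``$G_1^*$ peels to $H_1$ iff $G_2^*$ peels to $H_2$'' fully rigorous, in particular verifying that the decomposition of $G_1^*$ into ``core edges of $H_1$'' plus ``extra edges'' is genuinely well-defined and that the extra edges attach to $H_1$ in a way that is respected by $\phi$. One has to be careful that an extra hyperedge may share variable nodes with $H_1$, and that after $\ell$ rounds of peeling exactly these extra edges (and possibly some of the $H_1$-edges, then restored?) -- actually no: the surviving edge set must be \emph{exactly} that of $H_1$, so every extra edge must be peeled away within $\ell$ rounds and no edge of $H_1$ may be, which is a constraint on how the extra edges can attach. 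Translating this constraint faithfully through $\phi$, and confirming it is symmetric between the two sides, is the technical heart of the argument; everything else is bookkeeping. I expect this is why the authors relegate the detailed proof to Appendix~\ref{sec:proofofuniformity}.
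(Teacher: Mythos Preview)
Your approach is essentially the paper's: express $\Pr(G_\ell=H_i)$ as a sum over factor graphs $G^*$ that peel to $H_i$, observe that each contributes a probability depending only on the number of hyperedges, and construct a bijection between the two families via $\phi$. Two places where the paper is more careful than your sketch deserve mention. First, the paper begins by reducing to the case $\phi|_\cV=\mathrm{id}$ (using that permutations of $\cV$ preserve the law of $G^r(n,p)$); this is needed because your assertion $\{\phi(v_1),\ldots,\phi(v_r)\}=\{v_1,\ldots,v_r\}$ is false in general---the hypothesis only gives $\phi(\cV)=\cV$ setwise. Second, the $r$-plainness check is not just bookkeeping: the paper argues that an $r$-duplicate in $f(G)$ is a $K_{2,r}$ all of whose nodes have degree at least~$2$, hence the entire $K_{2,r}$ survives peeling and would lie in $(f(G))_\ell=H_2$, contradicting the assumed $r$-plainness of $H_2$. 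You correctly flag the ``$G_1^*$ peels to $H_1$ iff $G_2^*$ peels to $H_2$'' step as the technical heart; the paper dispatches it by the one-line induction $f(G_t)=(f(G))_t$, which works because any node disabled at step~$t$ must be isolated in $H_i$, so all of its incident edges are among those left untouched by $f$.
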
 

This claim is very similar to standard results for simple graphs or hypergraphs
(see e.g.~\cite{Molloy2005})
and we defer the proof to Appendix~\ref{sec:proofofuniformity}. However, we note that in our setting there 
is one subtle technical difficulty to overcome which does not appear in many other cases,
namely that given a factor graph $G$ such that $G_\ell=H_1$,
if we transform $G$ by changing $H_1$ to $H_2$ but otherwise leaving $G$ unchanged,
we need to show that
the resulting graph is indeed the factor graph of an $r$-uniform hypergraph, and in particular is $r$-plain.

Claim~\ref{claim:uniformity} tells us that the factor graph $G_\ell$ after $\ell$ rounds
of the peeling process is uniformly random conditioned on its degree sequence and being $r$-plain.
Since Lemma~\ref{lem:mainlemma1} tells us the degree sequence quite precisely,
this is very helpful. We can change our point of view by saying that we first reveal
the degree sequence of $G_\ell$ without revealing any of its edges,
and subsequently we reveal edges only as required. More precisely, we consider the
\emph{configuration model}, in which each node is given half-edges based on its degree,
and we generate a uniformly random perfect matching between the two classes of half-edges
(at variable and factor nodes) conditioned on the
resulting factor graph being $r$-plain.
We need to know that this conditioning is not too restrictive, i.e.\ that the probability
that the resulting factor graph is $r$-plain is not too small. This will be stated
in Proposition~\ref{prop:simpleprob}, for which we first need some preliminaries.
We begin by observing that
$G^r(n,p)$ does not have too many nodes of high degree.

\begin{definition}\label{def:largedegsquare}
Given a function $\omega=\omega(n) \to \infty$, we say that a factor graph $H$
has property $\largedegsquare=\largedegsquare(\omega,n)$
if
$$
\sum_{\substack{v \in \cV(H):\  d(v)>\omega}}\  d(v)^2 = o(n).
$$
Furthermore given $\eps>0$ we say that $H$ has property
$\gooddegs=\gooddegs(\eps,\omega,n)$ if it satisfies property $\largedegsquare(\omega,n)$
and also satisfies the conclusion of Lemma~\ref{lem:mainlemma1} (with this $\eps$).
\end{definition}

\begin{claim}\label{claim:largedegsquare}
For any $\omega\xrightarrow{n \to \infty} \infty$,
with high probability $G^r(n,p)$ has property $\largedegsquare(\omega,n)$.
\end{claim}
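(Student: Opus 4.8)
The plan is to show that the contribution of high-degree variable nodes to $\sum_{v:\, d(v)>\omega} d(v)^2$ is $o(n)$ whp by a direct first-moment computation, splitting the sum dyadically over degree ranges. Since $G=G^r(n,p)$ is the factor graph of $H^r(n,p)$ and $p=\frac{d_n}{\binom{n-1}{r-1}}$ with $d_n\to d$, the degree $d(v)$ of a fixed variable node $v$ is distributed as $\Bi\!\left(\binom{n-1}{r-1},p\right)$, which is stochastically dominated by (and asymptotically) a $\Po(d)$ variable; in particular $\EE[d(v)]=(1+o(1))d$ and $\Pr(d(v)\ge k)$ decays faster than any exponential in $k$ once $k$ is large, uniformly in $n$. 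Concretely one has a Chernoff-type bound $\Pr(d(v)\ge k)\le e^{-k}$ for all $k$ above some absolute constant $k_0=k_0(d)$, which follows from Lemma~\ref{lem:chernoffbounds} (or the standard Poisson tail estimate).

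The key steps, in order, are as follows. First, fix $\omega=\omega(n)\to\infty$; without loss of generality $\omega\ge k_0$. For an integer parameter $t\ge 0$ set $I_t\coloneqq[2^t\omega, 2^{t+1}\omega)$, so that $\{v: d(v)>\omega\}=\bigcup_{t\ge 0}\{v:d(v)\in I_t\}$. Second, bound the expected contribution of $I_t$:
\[
\EE\!\left[\sum_{v:\, d(v)\in I_t} d(v)^2\right]
\le n\cdot (2^{t+1}\omega)^2\cdot \Pr\!\left(d(v)\ge 2^t\omega\right)
\le 4n\,\omega^2\, 4^{t}\, e^{-2^t\omega}.
\]
Third, sum over $t\ge 0$: since $\omega\to\infty$, for $n$ large we have $4^t e^{-2^t\omega}\le 2^{-t}e^{-\omega/2}$ for every $t\ge 0$ (the exponential beats the geometric factor once $\omega$ is large), so
\[
\EE\!\left[\sum_{v:\, d(v)>\omega} d(v)^2\right]
\le 4n\,\omega^2 e^{-\omega/2}\sum_{t\ge 0}2^{-t}
= 8n\,\omega^2 e^{-\omega/2}
= o(n),
\]
because $\omega^2 e^{-\omega/2}\to 0$. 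Fourth, apply Markov's inequality: $\Pr\!\left(\sum_{v:\,d(v)>\omega} d(v)^2\ge \sqrt{\omega^2 e^{-\omega/2}}\cdot n\right)\le 8\sqrt{\omega^2 e^{-\omega/2}}=o(1)$, so whp the sum is at most $\sqrt{\omega^2 e^{-\omega/2}}\cdot n=o(n)$, which is exactly property $\largedegsquare(\omega,n)$.

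The computation above is entirely routine; the only point requiring any care is the uniform-in-$n$ tail bound $\Pr(d(v)\ge k)\le e^{-k}$ for $k\ge k_0$, which is where one invokes Lemma~\ref{lem:chernoffbounds} with $N=\binom{n-1}{r-1}$, $Np=(1+o(1))d$ and $s=k-Np$. I do not expect a genuine obstacle here: the claim is a standard ``no heavy tail'' statement, and the dyadic decomposition plus Markov is the expected route. (One could alternatively quote the monotone coupling $d(v)\preccurlyeq \Po(d+o(1))$ and work directly with Poisson tails, which is essentially the same argument.)
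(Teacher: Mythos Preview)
Your proposal is correct and follows essentially the same route as the paper: both arguments exploit the Poisson-like tail of the binomial degree distribution to show that $\sum_{v:d(v)>\omega} d(v)^2$ has expectation $o(n)$, and then conclude the whp statement. The only cosmetic differences are that the paper sums over individual degree values $k\ge\omega$ rather than dyadic blocks, and appeals to a second-moment bound on the number $X_{\ge k}$ of vertices of degree at least $k$ in place of your final Markov step; neither change is material.
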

\begin{proof}
For $k \in \NN$, let us define $X_k$ to be the number of variable nodes of degree $k$
and $X_{\ge k}\coloneqq \sum_{j \in \NN_{\ge k}} X_j$.
Observe that the expected degree of a vertex is $\binom{n-1}{r-1}p = (1+o(1))d$.
It is a standard fact that the degrees of vertices are approximately
distributed as independent $\Po(d)$ variables. More formally (though much weaker),
it is an easy exercise in the second-moment method
to prove that whp, for any $k\in \NN$ we have $X_{\ge k} \le n\cdot \Pr(\Po(2d)\ge k)$ (we omit the details).
We therefore have
\begin{align*}
\sum_{v \in \cV(H): d(v) \ge \omega} d(v)^2 = \sum_{k \ge \omega} k^2 X_k
& \le n\sum_{k \ge \omega} k^2 \frac{e^{-2d}(2d)^k}{k!} = n \cdot (1+o(1))\omega^2\frac{e^{-2d}(2d)^{\omega}}{\omega!} = o(n),
\end{align*}
as required.
\end{proof}

Note that if $\largedegsquare$ holds in a factor graph $G$, then it also holds
in any subgraph of $G$, and in particular in $G_{\ell}$, the factor graph obtained
after $\ell$ steps of the peeling process. Together with Lemma~\ref{lem:mainlemma1},
this shows that, with $\ell$ and $\eps$ as in given in that lemma and any $\omega \to \infty$,
setting $G = G^r(n,p)$,
with high probability $G_\ell$ satisfies $\gooddegs(\eps,\omega,n)$.

Let us observe further that $\gooddegs$ is a property that depends only on the
degree sequences of variable and factor nodes of the graph, and therefore with a slight
abuse of terminology we may also say that it holds in a factor graph with half-edges,
where we have not yet determined which half-edges will be matched together.

\begin{proposition}\label{prop:simpleprob}
Let $\eps = o(1)$ and
suppose that $n$ variable nodes and $m=(1+o(1))\frac{dn}{r}$ factor nodes are given half-edges
in such a way that property $\gooddegs(\eps,\eps^{-1/4},n)$ holds.
Suppose also that the total numbers of half-edges at factor nodes and at variable nodes are equal,
and that we construct a uniformly random perfect matching between these two sets of half-edges.
Then there exists a constant $c_0>0$ (independent of $\eps,n$)
such that for sufficiently large $n$ the probability that the resulting factor graph is
$r$-plain is at least $c_0$.
\end{proposition}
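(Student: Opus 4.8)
The plan is to use the standard second-moment / Poisson-approximation heuristic for the configuration model, adapting the classical argument (as in~\cite{Molloy2005,Molloy1995}) to the bipartite factor-graph setting. Write $X_1$ for the number of double-edges (two half-edges at a common variable node matched to two half-edges at a common factor node) and $X_2$ for the number of $r$-duplicates in the random perfect matching. The probability that the resulting factor graph is $r$-plain is exactly $\Pr(X_1 = 0 \text{ and } X_2 = 0)$, so it suffices to show that $X_1+X_2$ is, in the limit, stochastically dominated by a Poisson-type variable whose mean is bounded above by an absolute constant, and then to extract a lower bound on $\Pr(X_1+X_2=0)$. The cleanest route is to bound $\expec(X_1)$ and $\expec(X_2)$ from above by absolute constants $C_1,C_2$ and then show $\Pr(X_1=0, X_2=0) \ge c_0$ for some $c_0 = c_0(C_1,C_2) > 0$; for the latter one can use either the method of moments (showing all factorial moments converge to those of independent Poissons, so $\Pr(X_1=X_2=0) \to e^{-\lambda_1-\lambda_2}$ for the limiting means $\lambda_i \le C_i$) or, more robustly for a mere lower bound, a Chen--Stein or Suen-type inequality, or even just a Harris/FKG-style correlation argument after a truncation.

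The heart of the matter is the first-moment estimate, which is where property $\gooddegs(\eps,\eps^{-1/4},n)$ enters. First I would compute $\expec(X_1)$. The number of ordered pairs of distinct half-edges at variable nodes that sit at a \emph{common} variable node is $\sum_{v\in\cV} d(v)(d(v)-1) = \Theta(n)$, since by the conclusion of Lemma~\ref{lem:mainlemma1} the degree sequence is asymptotically a $\widetilde\Po(d\intfactsurvlim)$ distribution (which has bounded second moment) on the bulk of vertices, and by property $\largedegsquare(\eps^{-1/4},n)$ the contribution of vertices of degree exceeding $\eps^{-1/4}$ to $\sum d(v)^2$ is $o(n)$; similarly $\sum_{a\in\cF} r(r-1) = \Theta(n)$ since all factor nodes have degree $r$ and $m=\Theta(n)$. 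Each such pair-at-a-variable and pair-at-a-factor can be matched together to form a double-edge in $2$ ways, and the probability of a specified pairing of two prescribed half-edges from the variable side with two prescribed half-edges from the factor side in a uniform perfect matching on $M = \Theta(n)$ half-edges per side is $\frac{1}{M}\cdot\frac{1}{M-1} = \Theta(n^{-2})$. Multiplying, $\expec(X_1) = \Theta(n)\cdot\Theta(n)\cdot 2 \cdot \Theta(n^{-2}) = \Theta(1)$, and an explicit upper bound $C_1$ follows by being careful with the constants. The $r$-duplicate count $\expec(X_2)$ is handled the same way but is in fact of lower order (it requires $r$ simultaneous double-edges between a fixed pair of factor nodes, costing an extra factor $n^{-(r-2)}$ roughly), so $\expec(X_2) = O(n^{-1}) = o(1)$; one can even afford to be wasteful here.

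The main obstacle I anticipate is \emph{not} the first-moment bound but controlling the high-degree variable nodes uniformly: $\sum_v d(v)^2$ could in principle be dominated by a single vertex of degree $\Theta(\sqrt n)$, which would blow up $\expec(X_1)$, and this is precisely what property $\largedegsquare$ is designed to exclude — so the real work is making sure every step that sums over degrees is legitimately controlled by $\gooddegs$ rather than just by the Poisson tail heuristically. A secondary subtlety is passing from the mean bound to the lower bound $\Pr(X_1=X_2=0)\ge c_0$: the events "this pair forms a double-edge" are negatively correlated in a uniform matching in a way one must justify, so the safest implementation is a Suen-type inequality (bounding $\Pr(X_1+X_2 > 0)$ away from $1$ via $\expec(X_1+X_2)$ and a "dependency" correction term $\Delta$ which, by a routine but slightly tedious estimate using $\gooddegs$ again, is also $O(1)$), giving $\Pr(\text{$r$-plain}) = \Pr(X_1+X_2=0) \ge \exp(-O(1)) =: c_0 > 0$. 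I would present the double-edge calculation in full and then remark that the $r$-duplicate term and the dependency term are handled analogously and are of the same or smaller order.
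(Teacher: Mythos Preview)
Your proposal is correct and matches the paper's approach exactly: the paper explicitly says the proof is ``a standard exercise in applying the method of moments to determine the asymptotic distribution of the number of double edges and $r$-duplicates'' and omits the details, which is precisely the route you sketch. Two minor imprecisions worth fixing (neither affects the argument): in this setting the factor nodes have degrees distributed as $\widetilde\Bi(r,\intvarsurvlim)$ rather than all equal to $r$ (but they are still bounded by $r$, so your estimate $\sum_a d(a)(d(a)-1)=O(n)$ survives), and an $r$-duplicate is a $K_{2,r}$ on two factor nodes sharing the same $r$ variable-node neighbours, not ``$r$ double-edges between a pair of factor nodes'' --- your order-of-magnitude conclusion $\expec(X_2)=O(n^{2-r})=o(1)$ is nonetheless correct.
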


The proof of Proposition~\ref{prop:simpleprob} is a standard exercise
in applying the method of moments to determine the asymptotic distribution
of the number of double edges and $r$-duplicates -- we omit the details.
The proposition states in particular that we may condition on the
resulting graph being $r$-plain without skewing the distribution of the matching too much.
More precisely, any statements that are true with high probability for the uniformly random perfect matching
are also true with high probability under the condition that the resulting graph is simple.
Therefore in what follows, for simplicity we will suppress this conditioning.

\begin{definition}[Change process]\label{def:changeprocess}
We will track the changes that the peeling process makes after reaching round $\ell$
by revealing information a little at a time as follows.
\begin{itemize}
\item Reveal the degrees of all nodes.
\item While there are still nodes of degree one, pick one such node $x_0$.
\begin{itemize}
\item Reveal its neighbour $x_1$, delete the edge $x_0x_1$ and update the degrees of $x_0,x_1$.
\item If $x_1$ now has degree one, continue from $x_1$; otherwise find a new $x_0$ (if there is one).
\end{itemize}
\end{itemize}
\end{definition}

\noindent In other words, we track the changes in a depth-first search manner (rather than the breadth-first
view of considering rounds of the peeling process). We call this the \emph{change process}.

Observe that we only reveal edges one at a time (just before deleting them).
The following claim is simple given Lemma~\ref{lem:mainlemma1}, but is the essential heart of our proof.
Recall that $\eps_2 \coloneqq \sqrt{\eps_1}$ as defined in Lemma~\ref{lem:mainlemma2}.

\begin{claim}\label{claim:terminationprob}
Let $G'$ be any graph obtained from $G_\ell$ by deleting at most $\eps_2 n$ edges.
Then when revealing the second endpoint of any half-edge, the probability of revealing a node of degree at least three
is at least
$$
\min\left\{\frac{(d\intfactsurvlim)^2 \exp(-d\intfactsurvlim)}{2} , \frac{(r-1)(r-2)\intvarsurvlim^2(1-\intvarsurvlim)^{r-3}}{2}\right\} - 20\eps_2.
$$
In particular there exists a constant $c=c(r,d)>0$ such that this probability is at least $c$.
\end{claim}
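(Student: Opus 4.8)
The plan is to compute, for a uniformly random perfect matching between half-edges (in the configuration model, with the $r$-plain conditioning suppressed as justified by Proposition~\ref{prop:simpleprob}), the probability that a freshly revealed second endpoint of a half-edge lands on a node of degree at least three. Since half-edges are matched uniformly at random, revealing a second endpoint of a half-edge is (up to lower-order corrections coming from the half-edges already exposed) like a size-biased sample from the appropriate side: if the half-edge we are completing is attached to a variable node, its partner is a factor node chosen with probability proportional to its number of half-edges; if it is attached to a factor node, its partner is a variable node chosen proportionally to its degree. So the first step is to write the relevant conditional probability as $\sum_{j\ge 3} j\, N_j \big/ \sum_{j\ge 1} j\, N_j$, where $N_j$ is the number of nodes of degree $j$ on the relevant side of $G'$, and to argue that the at most $\eps_2 n$ deleted edges, plus the $O(1)$ half-edges already revealed in the current run of the change process, perturb both numerator and denominator by at most $O(\eps_2 n)$.

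The second step is to substitute the degree-sequence estimates from Lemma~\ref{lem:mainlemma1}: whp $G_\ell$ has a $\widetilde\Po(d\intfactsurvlim)$-like variable-degree profile and a $\widetilde\Bi(r,\intvarsurvlim)$-like factor-degree profile, each up to additive error $\eps_1 n \le \eps_2 n$ per degree class (and here I would note that only finitely many degree classes matter on the factor side, since factor degrees are at most $r$; on the variable side the Poisson tail is summable so truncating at a large constant contributes $o(1)$, absorbable into the $20\eps_2$ slack). For the variable side one gets $\sum_{j\ge 3} j\Pr(\Po(d\intfactsurvlim)=j)$ over $\sum_{j\ge 2} j\Pr(\Po(d\intfactsurvlim)=j)$; using $\sum_{j\ge 1} j\Pr(\Po(\lambda)=j)=\lambda$ and $\sum_{j\ge 2}j\Pr(\Po(\lambda)=j) = \lambda - \Pr(\Po(\lambda)=1) = \lambda(1-e^{-\lambda})$ and $\sum_{j\ge 3} j\Pr(\Po(\lambda)=j) = \lambda(1-e^{-\lambda}) - 2\Pr(\Po(\lambda)=2) = \lambda(1-e^{-\lambda}-\lambda e^{-\lambda})$ with $\lambda=d\intfactsurvlim$, the ratio is at least $\Pr(\Po(d\intfactsurvlim)=j\ge 3)$ divided by something at most $d\intfactsurvlim$, and in any case is bounded below by $(d\intfactsurvlim)^2 e^{-d\intfactsurvlim}/2$ after a crude estimate. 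An entirely analogous computation for the factor side with the $\Bi(r-1,\intvarsurvlim)$ offspring (the factor node has $r-1$ children slots relevant here) yields the $\frac{(r-1)(r-2)\intvarsurvlim^2(1-\intvarsurvlim)^{r-3}}{2}$ term. Taking the minimum over the two sides, since we do not know in advance which side the revealed endpoint lies on, and folding all the $O(\eps_2)$ perturbations into the stated $-20\eps_2$ gives the bound.

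The last step is the ``In particular'' clause: for $d>d^*$ we have $\intvarsurvlim>0$ and $\intfactsurvlim>0$ by Claim~\ref{claim:behaviouroffixedpointsol} and \eqref{eq:relatingtherhos}, so both terms in the minimum are strictly positive constants depending only on $r,d$; choosing $c=c(r,d)>0$ to be, say, half that minimum, the bound is at least $c$ once $\eps_2$ (which is $o(1)$) is small enough, i.e.\ for $n$ large. (In the range $d\le d^*$ the loose core is empty whp and there is nothing to prove, or one simply observes $\eps_2 n$ edges already exceed $e(G_\ell)$.)

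The main obstacle I anticipate is not the algebra but the bookkeeping of the perturbations: making precise that ``revealing the second endpoint of a half-edge in $G'$'' really is governed by size-biased sampling from the \emph{current} degree sequence, and controlling how much the at most $\eps_2 n$ previously deleted edges together with the current (bounded-length, by the finiteness hint from Lemma~\ref{lem:mainlemma1}) chain of exposed half-edges shift the counts $\sum_{j\ge 3} jN_j$ and $\sum_{j\ge 1} jN_j$. The clean way to handle this is to observe that deleting one edge changes each of these sums by $O(1)$ per affected endpoint, so $\eps_2 n$ deletions change them by $O(\eps_2 n)$, and then divide; since the denominator $\sum_{j\ge 1} jN_j$ is $\Theta(n)$ (it equals twice the number of edges, which is $\Theta(n)$ by Proposition~\ref{prop:numberfactornodes} minus at most $\eps_2 n$), the relative perturbation is $O(\eps_2)$, which is what the $20\eps_2$ is there to absorb.
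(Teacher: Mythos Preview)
Your plan is correct and essentially matches the paper's proof: write the revealing probability as the size-biased ratio $\sum_{j\ge 3} jN_j \big/ \sum_{j\ge 1} jN_j$, perturb by $O(\eps_2)$ to pass from $G'$ back to $G_\ell$, plug in the degree profile from Lemma~\ref{lem:mainlemma1}, and simplify. Two small remarks: on the factor side the degree distribution in $G_\ell$ is $\widetilde\Bi(r,\intvarsurvlim)$, not $\Bi(r-1,\intvarsurvlim)$ (the $r-1$ pertains to internal nodes inside \coredetector, not to degrees in $G_\ell$), and the displayed $(r-1)(r-2)$ factor comes from $3\Pr(\Bi(r,\intvarsurvlim)=3)/(r\intvarsurvlim)$; also, the paper sidesteps your Poisson-tail truncation by keeping only the $j=3$ term in the numerator and only $j\le 3$ terms in the denominator, via the elementary inequality $\tfrac{a+c}{b+c}\ge \tfrac{a}{b}$ for $0\le a\le b$ and $c\ge 0$.
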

We defer the proof to Appendix~\ref{sec:proofofterminationprob}.

\noindent This claim tells us that, provided we have not deleted too many edges so far, there is a reasonable probability
of revealing a node of degree at least three, which blocks the continued propagation of any deletions.

Let $c=c(r,d)$ be as in Claim~\ref{claim:terminationprob}
and let us set $\delta_1\coloneqq\eps_1^{3/4}$.
We now define an abstract branching process
which will provide an upper coupling on the change process starting from $G_\ell$.

\begin{definition}
Let $\cT$ be a branching process which begins with $\delta_1n$ vertices in generation~$0$,
and in which each vertex independently has a child with probability $1-c$, and otherwise has no children.
\end{definition}

\begin{proposition}\label{prop:changecoupling}
The process $\cT$ can be coupled with the change process in such a way that,
if both processes are run until one of the stopping conditions
\begin{itemize}
\item $\cT$ has reached size at least $\eps_2 n$;
\item $\cT$ has died out,
\end{itemize}
is satisfied, then $\cT$ forms an upper coupling on the change process.
\end{proposition}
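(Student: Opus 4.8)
The plan is to couple the change process with $\cT$ by matching up "potential propagation events" one for one. The change process, as described in Definition~\ref{def:changeprocess}, explores a set of edges in a depth-first manner, and at each step where it reveals the second endpoint of a half-edge, one of two things happens: either the revealed node has degree at least three (in the current graph $G'$ obtained from $G_\ell$ by the deletions made so far), in which case propagation from that half-edge halts, or the node has degree $\le 2$, in which case the deletion may continue to propagate. The idea is that each "continue" in the change process should correspond to a vertex of $\cT$ having a child, and each "halt" to a vertex having no children; the $\delta_1 n$ roots of $\cT$ will correspond to (an upper bound on) the number of degree-one nodes present in $G_\ell$ when round $\ell$ begins, i.e.\ the nodes that seed the change process.

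First I would check the seeding bound: by Lemma~\ref{lem:mainlemma1} (whose high-probability conclusion we are assuming throughout this section), the proportion of variable nodes and of factor nodes of degree exactly $1$ in $G_\ell$ is $\Pr(\widetilde\Po(d\intfactsurvlim)=1)\pm\eps_1 = \eps_1$ and $\Pr(\widetilde\Bi(r,\intvarsurvlim)=1)\pm\eps_1 = \eps_1$ respectively (since the $\widetilde\Po$ and $\widetilde\Bi$ distributions assign probability zero to the value $1$). Using Proposition~\ref{prop:numberfactornodes} to bound the number of factor nodes by $O(n)$, the total number of degree-one nodes in $G_\ell$ is at most, say, $\delta_1 n = \eps_1^{3/4} n$ for $n$ large, since $\eps_1 + O(\eps_1) = o(\eps_1^{3/4})$. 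So we may start $\cT$ with $\delta_1 n$ roots, one associated to each potential starting node of the change process (padding with "dummy" roots if there are fewer, which only makes the coupling more of an over-estimate).

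Next I would set up the step-by-step coupling. We run the change process and $\cT$ in tandem. Each time the change process is about to reveal the second endpoint of a half-edge — which happens once per "edge deletion that might propagate" — we pair this with a not-yet-explored vertex of $\cT$: whether that $\cT$-vertex has a child is, by construction, an independent $\mathrm{Ber}(1-c)$ trial. We declare the coupling so that the $\cT$-vertex has a child \emph{whenever} the change process's revealed node has degree $\le 2$ in the current graph $G'$ (so that propagation can continue). By Claim~\ref{claim:terminationprob}, as long as at most $\eps_2 n$ edges have been deleted so far, the conditional probability that the revealed node has degree $\ge 3$ is at least $c$, i.e.\ the probability that propagation continues is at most $1-c$. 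Hence the indicator "propagation continues" is stochastically dominated by $\mathrm{Ber}(1-c)$, and we can realise the coupling so that whenever the change process continues, the corresponding $\cT$-vertex has a child — possibly $\cT$ has extra children that the change process does not. Moreover a single node disabled in the change process, having degree $\le 1$, decreases the degree of at most one neighbour, so each explored node of the change process spawns at most one "continuation", exactly matching the "at most one child" structure of $\cT$. Running both processes until $\cT$ either dies out or reaches size $\eps_2 n$, we conclude that the number of nodes explored by the change process is at most the size of $\cT$ at the stopping time, which is the claimed upper coupling.

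The main obstacle is making the domination in Claim~\ref{claim:terminationprob} valid \emph{throughout} the coupled run: the claim's hypothesis is that at most $\eps_2 n$ edges have been deleted, and this is exactly why we stop $\cT$ (and hence the change process) once $\cT$ reaches size $\eps_2 n$ — up to that point the number of change-process deletions is bounded by the current size of $\cT$, which is $< \eps_2 n$, so Claim~\ref{claim:terminationprob} applies at every step and the per-step domination is justified. One should also be slightly careful that the change process may have several active "branches" (several degree-one nodes) and may revisit structure; but since edges are revealed one at a time only just before deletion, and the graph $G_\ell$ is (conditionally) uniform given its degree sequence by Claim~\ref{claim:uniformity} and Proposition~\ref{prop:simpleprob}, the conditional-probability bound of Claim~\ref{claim:terminationprob} holds for each freshly revealed half-edge endpoint regardless of the depth-first bookkeeping, so the coupling goes through.
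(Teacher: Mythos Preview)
Your proposal is correct and follows essentially the same approach as the paper's own proof: bound the number of seed nodes by $\delta_1 n$ via Lemma~\ref{lem:mainlemma1} (using that the $\widetilde\Po$ and $\widetilde\Bi$ distributions assign zero mass to $1$), then couple each reveal step of the change process with a Bernoulli$(1-c)$ trial in $\cT$ via Claim~\ref{claim:terminationprob}, noting that a disabled node has at most one neighbour so the ``at most one child'' structure of $\cT$ is respected, and that the stopping condition at size $\eps_2 n$ keeps Claim~\ref{claim:terminationprob} applicable throughout. Your write-up is in fact more explicit than the paper's about the role of Claim~\ref{claim:uniformity} and Proposition~\ref{prop:simpleprob} in justifying that the per-step conditional bound holds regardless of the history of the depth-first exploration, which the paper leaves implicit.
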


\begin{proof}
We first need to show that \whp\ we make at most $\delta_1 n$ changes in round $\ell+1$ of the peeling process.
Since we have assumed that the high probability statement of Lemma~\ref{lem:mainlemma1} holds, we have
$\varpropl{j}{\ell}=\prob(\widetilde\Po(d\intfactsurvlim)=j)\pm\eps_1$ and $\factpropl{j}{\ell}=\prob(\widetilde\Bi(r,\intvarsurvlim)=j)\pm\eps_1$.
By the definition of the peeling process (Definition~\ref{def:peeling}) the only change we make when moving from $G_{\ell}$ to $G_{\ell+1}$ is that we disable all nodes of degree one.
The proportion of such variable and factor nodes in $G_{\ell}$ is $\varpropl{1}{\ell}$ and $\factpropl{1}{\ell}$ respectively.
Recalling that $\prob(\widetilde\Po(d\intfactsurvlim)=1) = \prob(\widetilde\Bi(r,\intvarsurvlim)=1) =0$,
this immediately implies that at most
$\eps_1(m+n) \le \delta_1 n$ nodes are disabled in round $\ell+1$ of the peeling process,
and these disablings represent the first nodes of the change process.

Now the proposition follows directly from the observation that in the change process,
a node only has at most one incident edge deleted (if it has degree one), and
therefore at most one neighbour is revealed, along with Claim~\ref{claim:terminationprob},
which implies that the probability of not causing any further changes is at least $c$.
The first stopping condition ensures that we have deleted at most $\eps_2 n$ edges,
and therefore the assumptions of Claim~\ref{claim:terminationprob} are indeed satisfied.
\end{proof}

In view of Proposition~\ref{prop:changecoupling}, it is enough to prove that \whp\ the
branching process $\cT$ dies out (i.e.\ fulfills the second stopping condition)
before reaching size $\eps_2 n$.

\begin{proposition}\label{prop:changebranchsmall}
\Whp\ $\cT$ contains at most $\eps_2 n$ vertices.
\end{proposition}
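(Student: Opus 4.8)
The plan is to exploit the extremely simple structure of $\cT$. Since every vertex of $\cT$ has at most one child, the descendants of any generation-$0$ vertex form a path, and the size of the subtree rooted at a single generation-$0$ vertex is a geometric random variable $T$ with $\Pr(T=k)=(1-c)^{k-1}c$ for $k\in\NN_{\ge 1}$, and hence $\expec(T)=1/c$, where $c=c(r,d)>0$ is the positive constant supplied by Claim~\ref{claim:terminationprob}. In particular every lineage is almost surely finite, so $\cT$ is almost surely finite and its total size $|\cT|$ is a well-defined random variable.

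The first step is to write $|\cT|=\sum_{i=1}^{\delta_1 n}T_i$ as a sum of $\delta_1 n$ independent copies of $T$ (one subtree per generation-$0$ vertex), so that $\expec(|\cT|)=\delta_1 n/c$. The second step is to recall that $\delta_1=\eps_1^{3/4}$ while $\eps_2=\sqrt{\eps_1}$, so that $\delta_1/\eps_2=\eps_1^{1/4}=o(1)$, and then apply Markov's inequality:
\[
\Pr\left(|\cT|\ge \eps_2 n\right)\le \frac{\expec(|\cT|)}{\eps_2 n}=\frac{\delta_1}{c\,\eps_2}=\frac{\eps_1^{1/4}}{c}=o(1),
\]
which is exactly the statement of the proposition. (If one wanted an exponentially small failure probability one could instead invoke a Chernoff-type tail bound for a sum of independent geometric variables, but this crude Markov estimate already suffices, since $c$ is a constant and $\delta_1=o(\eps_2)$.)

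There is essentially no obstacle here: the only points to verify are that $c$ is a genuine constant depending only on $r$ and $d$ (which is precisely the last sentence of Claim~\ref{claim:terminationprob}) and that generation $0$ has size $\delta_1 n$ with $\delta_1=o(\eps_2)$. Combined with Proposition~\ref{prop:changecoupling}, under which $\cT$ upper-couples the change process until $\cT$ either dies out or reaches size $\eps_2 n$, this shows that \whp\ the change process — and hence the peeling process in all rounds after round $\ell$ — disables at most $\eps_2 n$ further nodes, which is what is needed for Lemma~\ref{lem:mainlemma2}.
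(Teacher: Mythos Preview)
Your proof is correct but takes a different, more elementary route than the paper. The paper argues that if $|\cT|\ge \eps_2 n$ then among the first $\eps_2 n$ vertices explored there must be at least $(\eps_2-\delta_1)n$ children, and bounds $\Pr\big(\Bi(\eps_2 n,1-c)\ge(\eps_2-\delta_1)n\big)$ by the Chernoff inequality, obtaining a bound of order $\exp(-\Theta(\eps_2 n))$. You instead write $|\cT|$ as a sum of $\delta_1 n$ independent geometric variables with mean $1/c$ and apply Markov's inequality directly, using $\delta_1/\eps_2=\eps_1^{1/4}=o(1)$. Your argument is shorter and uses only first moments; the paper's gives a quantitatively much stronger tail bound, but as you correctly note, nothing downstream (Proposition~\ref{prop:changecoupling} and Lemma~\ref{lem:mainlemma2}) requires more than a \whp\ statement, so the Markov bound suffices.
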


\begin{proof}
In order to reach size $\eps_2 n$, the first (at most) $\eps_2 n$ vertices of the process
would have to have a total of at least $(\eps_2-\delta_1)n$ children,
which, by Lemma~\ref{lem:chernoffbounds}, occurs with probability at most
\begin{align*}
\Pr\left(\Bi(\eps_2 n,1-c)\ge (\eps_2-\delta_1)n\right) & \le 2\cdot \exp\left(-\frac{(c\eps_2-\delta_1)^2 n^2}{2\eps_2 n + 2(c\eps_2-\delta_1)n/3}\right)\\
& \le 2\cdot \exp\left(-\frac{(c\eps_2/2)^2n}{3\eps_2}\right) =o(1)
\end{align*}
(since $\eps_2 n = \sqrt{\eps_1}n \to \infty$)
as required.
\end{proof}

We can now complete the proof of Lemma~\ref{lem:mainlemma2}.

\begin{proof}[Proof of Lemma~\ref{lem:mainlemma2}]
Proposition~\ref{prop:changecoupling} implies that the probability that at least $\eps_2 n$
further nodes are disabled after round $\ell$ in the peeling process is at most the probability
that $\cT$ reaches size $\eps_2 n$. But this event has probability $o(1)$ by
Proposition~\ref{prop:changebranchsmall}.

This proves that \whp\ at most $\eps_2 n$ further nodes are disabled
after round $\ell$ in the peeling process. To show the remaining two statements,
observe that
since disabling a node deletes at most one edge, and therefore changes the degree of at most one variable node
and at most one factor node, at most $\eps_2 n$ nodes of each type will have their degree changed.
It follows immediately that for any $j \in \NN$ we have $\varprop{j} = \varpropl{j}{\ell} \pm \eps_2$,
while similarly
$$
\factprop{j} = \factpropl{j}{\ell} \pm \frac{\eps_2 n}{m} = \factpropl{j}{\ell} \pm \frac{2\eps_2 r}{d},
$$
where we have used the fact that \whp\ $\frac{n}{m} \le \frac{2r}{d}$ by Proposition~\ref{prop:numberfactornodes}.
\end{proof}

\section{Concluding remarks}\label{sec:concluding}

\subsection{Upper bound on $L_P,L_C$}
Theorem~\ref{thm:mainresultcycle} and Corollary~\ref{cor:upperboundlongestpath} state that \whp\ $L_P$ and $L_C$,
the length of the longest loose path and longest loose cycle respectively in $\cH$,
satisfy $L_P,L_C\leq(\min\{\oldeta,\gamma\}+o(1))\cdot n$, but which of $\oldeta,\gamma$ is smaller?
Recall that both $\oldeta$ and $\gamma$ are functions of $d$ and $r$, with $\oldeta=\gamma =0$ for $d<d^*$.
Numerical approximations and plots with Mathematica suggest that 
for $r=2,3$, we have
$\oldeta \le \gamma$
for all $d$,
but that for $r\ge 4$, for values of $d$ not too much larger than $d^*$ we have $\gamma<\oldeta$,
and there is a crossing point after which (for larger $d$) we have $\oldeta <\gamma$.
It would be interesting to investigate this behaviour more closely to determine whether this is indeed true,
and to determine the crossing point precisely as a function of $r$.

\subsection{High-order cores}
The methods used in this paper are amenable to more general definitions of cores in hypergraphs.
More precisely, vertex degrees are far from the only type of degrees that have been
extensively studied in hypergraphs---one can consider the degrees of $j$-sets of vertices for each $1\le j \le r-1$,
and for each $j$ there is a natural associated definition of a core. So far only the case $j=1$ has been studied,
but it would also be interesting to consider ``high-order cores'', i.e.\ $j\ge 2$.

\bibliographystyle{plain}
\bibliography{References}
\newpage
\appendix

\section{Analysis of fixed-point equation}\label{app:analysisfixedpoint}
Recall the fixed-point equation~\eqref{eq:fixedpointequation}
\[
1-\rho=F(1-\rho)
\]
with largest solution $\intvarsurvlim.$ Note that this equation has no solutions for $\rho\geq 1$ and that $0$ is a solution,
hence $0\leq\intvarsurvlim<1$. Our goal in this section is to prove Claim~\ref{claim:behaviouroffixedpointsol} and Lemma~\ref{lem:continuity}.
It will be more convenient to use a transformed equation: by substituting $x=1-\rho$ and taking logarithms on both sides (which is permissible
since both sides are positive in any solution)
we get the equivalent equation $\log x=-d(1-x^{r-1})$, or equivalently 
\begin{equation}\label{eq:definitionoff}
f(x)\coloneqq\log x+d(1-x^{r-1}) = 0,
\end{equation}
for $0<x\leq 1$. Furthermore we define $\invintvarsurvlim$ to be the smallest solution of~\eqref{eq:definitionoff}
and note that $\invintvarsurvlim=1-\intvarsurvlim$ holds. We now restate Claim~\ref{claim:behaviouroffixedpointsol}
in this new setting.

\begin{claim}\label{claim:behaviourfixedpointsolreformulated}\mbox{ }
\begin{enumerate}[label=\textnormal{\textbf{(F\arabic*)'}}]
\item\label{item:fixedpointsubcritical}If $d<d^*$, then $\invintvarsurvlim=1$.
\item\label{item:fixedpointsupercritical} If $d>d^*$, then $f(1)=0$ and there is a unique solution to~\eqref{eq:definitionoff} in $(0,1)$.
\end{enumerate}
\end{claim}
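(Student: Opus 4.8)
The plan is to reduce everything to an elementary monotonicity analysis of the function $f$ from~\eqref{eq:definitionoff}. First I would differentiate, obtaining
\[
f'(x) = \frac{1}{x} - d(r-1)x^{r-2} = \frac{1}{x}\left(1 - d(r-1)x^{r-1}\right),
\]
so that on $(0,1]$ the sign of $f'(x)$ is exactly the sign of $1 - d(r-1)x^{r-1}$, and $f'$ vanishes only at $x_0 := \left(d(r-1)\right)^{-1/(r-1)} = (d^*/d)^{1/(r-1)}$. The one observation that drives the whole proof is that $x_0 < 1$ if and only if $d > d^*$, and $x_0 > 1$ if and only if $d < d^*$; thus the threshold $d^* = \frac{1}{r-1}$ precisely controls whether the critical point of $f$ lies inside $(0,1)$.

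For~\ref{item:fixedpointsubcritical}: if $d < d^*$ then $x_0 > 1$, hence $d(r-1)x^{r-1} < 1$ for every $x \in (0,1]$, so $f' > 0$ and $f$ is strictly increasing on $(0,1]$. Since $f(1) = 0$ this forces $f(x) < 0$ on $(0,1)$, so $f$ has no zero in $(0,1)$; its smallest (and only) zero on $(0,1]$ is $x = 1$, i.e.\ $\invintvarsurvlim = 1$ (equivalently $\intvarsurvlim = 0$).

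For~\ref{item:fixedpointsupercritical}: the equality $f(1) = \log 1 + d(1-1) = 0$ is immediate. If $d > d^*$ then $0 < x_0 < 1$, and the sign of $f'$ shows that $f$ is strictly increasing on $(0,x_0)$ and strictly decreasing on $(x_0,1]$, so $x_0$ is the unique global maximum of $f$ on $(0,1]$. Because $f$ is strictly decreasing on $(x_0,1]$ with $f(1) = 0$, we get $f(x_0) > 0$ and $f > 0$ on $(x_0,1)$, so there is no zero in $(x_0,1)$. On the other side, $f(x) \to -\infty$ as $x \to 0^+$ (the $\log x$ term dominates while $d(1-x^{r-1}) \to d$) and $f(x_0) > 0$, so by the intermediate value theorem and strict monotonicity on $(0,x_0)$ there is exactly one zero of $f$ in $(0,x_0)$. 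Combining, $f$ has a unique zero in $(0,1)$, which is then both the smallest zero $\invintvarsurvlim \in (0,1)$ and the one corresponding to the unique positive solution $\intvarsurvlim = 1 - \invintvarsurvlim$ of~\eqref{eq:fixedpointequation}.

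I do not expect a genuine obstacle here: this is a routine calculus argument. The only places that need a little care are (i) deducing $f(x_0) > 0$ from $f(1) = 0$ and strict monotonicity rather than trying to estimate $f(x_0)$ directly, and (ii) invoking the $x \to 0^+$ behaviour to get the sign change on $(0,x_0)$. I would also record here, for later use in Lemma~\ref{lem:continuity}, that when $d \ne d^*$ the unique zero in $(0,1)$ lies strictly in the region where $f' > 0$, so the implicit function theorem gives that it depends continuously (indeed smoothly) on $d$.
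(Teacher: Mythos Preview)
Your argument is correct and follows essentially the same elementary calculus analysis as the paper; the only (minor) difference is in the uniqueness part of \ref{item:fixedpointsupercritical}: the paper argues by contradiction via Rolle's theorem and concavity ($f''<0$), whereas you explicitly locate the unique critical point $x_0=(d(r-1))^{-1/(r-1)}$ and read off the unimodal shape directly. The two are equivalent (concavity forces $f'$ to have at most one zero, which is exactly your $x_0$), and your version is arguably a touch more transparent.
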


\begin{proof}[Proof of Claim~\ref{claim:behaviouroffixedpointsol}]
It is clear that Claim~\ref{claim:behaviouroffixedpointsol} follows directly from Claim~\ref{claim:behaviourfixedpointsolreformulated}
since $\intvarsurvlim = 1-\invintvarsurvlim$.
\end{proof}

To prove Claim~\ref{claim:behaviourfixedpointsolreformulated}, we define
\begin{equation*}
f_n(x)\coloneqq\log x+d\left(1+\frac{1}{\omega}\right)(1-x^{r-1}),
\end{equation*}
where $\omega=\omega(n)$ is a function with $\omega\xrightarrow{n\to\infty}\infty$.
Observe that $f(x)=\lim\limits_{n\to\infty}f_n(x)$ for each $x$.
We would like to treat $f_n$ and $f$ simultaneously, therefore with a slight abuse of notation
we define $f_\infty = f$, and set $1/\omega=0$ for $n=\infty$.
Furthermore, whenever we use statements such as ``for $n$ sufficiently large'', this also includes $n=\infty$.

Now observe that
\begin{align*}
f'_n(x) &= \frac{1}{x}-d\left(1+\frac{1}{\omega}\right)(r-1)x^{r-2}, \\
f''_n(x) &= -\frac{1}{x^2}-d\left(1+\frac{1}{\omega}\right)(r-1)(r-2)x^{r-3}.
\end{align*}
The following fact collects properties of $f_n$ which are trivial to check and that will be used later in this section. 

\begin{fact}\label{fact:basicpropertiesf} For sufficiently large $n$ we have
\begin{enumerate}[label=\textnormal{\textbf{(P\arabic*)}}]
    \item\label{item:fnatone} $f_n(1)=0$.
    \vspace{0.3cm}
    \item\label{item:fntominusinfinity} $f_n(x)\xrightarrow{x\to 0}-\infty$.
    \vspace{0.3cm}
    \item\label{item:subcriticalderivative} If $d<d^*$, then $f'_n(x)>0$ for all $0<x<1$.
    \vspace{0.3cm}
    \item\label{item:supercriticalderivative} If $d>d^*$, then $f'_n(1)<0$.
    \vspace{0.3cm}
    \item\label{item:fnsecondderivative} $f''_n(x)<0$ for all $0<x\leq 1$.
\end{enumerate}
\end{fact}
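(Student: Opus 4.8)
The plan is to verify the five properties \ref{item:fnatone}--\ref{item:fnsecondderivative} by direct inspection of the explicit formulas for $f_n$, $f'_n$ and $f''_n$ displayed above; the only subtlety is the dependence on $n$ through the factor $1+\frac{1}{\omega}$, together with the standing hypothesis $r\ge 3$ (so that $r-1\ge 2$ and $r-2\ge 1$). Recall that $\frac{1}{\omega}\to 0$ as $n\to\infty$ and $\frac{1}{\omega}=0$ when $n=\infty$; the phrase ``for sufficiently large $n$'' is there precisely to absorb this term, and every assertion holds outright at $n=\infty$.

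Properties \ref{item:fnatone}, \ref{item:fntominusinfinity} and \ref{item:fnsecondderivative} need no case distinction. For \ref{item:fnatone}, substituting $x=1$ makes both $\log x$ and $1-x^{r-1}$ vanish. For \ref{item:fntominusinfinity}, as $x\to 0^+$ we have $\log x\to-\infty$ while $d(1+\frac{1}{\omega})(1-x^{r-1})$ stays bounded (here $r-1\ge 1$ is all that is needed). For \ref{item:fnsecondderivative}, in $f''_n(x)=-\frac{1}{x^2}-d(1+\frac{1}{\omega})(r-1)(r-2)x^{r-3}$ the first term is strictly negative on $(0,1]$ while the second is $\le 0$ there, since $d,\omega>0$, $r-1,r-2\ge 1$ and $x^{r-3}\ge 0$.

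For the remaining two properties I would use the threshold $d^*=\frac{1}{r-1}$, i.e.\ the sign of $d(r-1)-1$. Multiplying $f'_n(x)$ by $x>0$ gives $xf'_n(x)=1-d(1+\frac{1}{\omega})(r-1)x^{r-1}$. If $d<d^*$ then $d(r-1)<1$, hence $d(1+\frac{1}{\omega})(r-1)<1$ for $n$ large, and since $x^{r-1}<1$ on $(0,1)$ we get $xf'_n(x)>0$, which is \ref{item:subcriticalderivative}. If $d>d^*$, then at $x=1$ we have $f'_n(1)=1-d(1+\frac{1}{\omega})(r-1)<0$ as soon as $d(1+\frac{1}{\omega})(r-1)>1$, i.e.\ for $n$ large since $d(r-1)>1$, which is \ref{item:supercriticalderivative}. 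There is no real obstacle: each item is a one-line sign check, and the only point requiring a little care is that the strict inequalities on $d(r-1)$ survive multiplication by $1+\frac{1}{\omega}$ once $n$ is large enough — exactly what the quantifier in the statement guarantees.
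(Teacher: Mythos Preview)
Your verification is correct and is exactly the direct checking the paper has in mind; the paper in fact does not supply a proof at all, simply labelling these properties ``trivial to check''. One minor remark: for \ref{item:supercriticalderivative} you do not even need $n$ large, since $1+\tfrac{1}{\omega}\ge 1$ already gives $d(1+\tfrac{1}{\omega})(r-1)\ge d(r-1)>1$.
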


\begin{proof}[Proof of Claim~\ref{claim:behaviourfixedpointsolreformulated}]
First we show \ref{item:fixedpointsubcritical}. Let $d<d^ *$. By~\ref{item:fnatone} and~\ref{item:subcriticalderivative} we have $f(x)<0$ for all $0<x<1$ and hence that $x=1$ is the unique solution of $f(x)=0$, i.e. $\invintvarsurvlim=1$. 

To see why~\ref{item:fixedpointsupercritical} holds we observe that $\ref{item:fnatone}$ and $\ref{item:supercriticalderivative}$ imply that there exists
$x_0$ with $0<x_0<1$ such that $f(x_0)>0$. Since~\ref{item:fntominusinfinity} holds by the intermediate value theorem there is $y_0$ with $0<y_0<x_0$ such that $f(y_0)=0$, which implies that $\invintvarsurvlim<1$.

To prove uniqueness, suppose that there are two solutions $x_1,x_2$ of~\eqref{eq:definitionoff} with $0<x_1<x_2<1$. Since $1$ is also a solution,
by the mean value theorem there exist $y_1,y_2$ with $x_1<y_1<x_2<y_2<1$ such that
$f'(y_1)=f'(y_2) = 0$. Then by the mean value theorem applied to $f'$, there exists a $z_1$ with $y_1<z_1<y_2$ such that
$f''(z_1)=0$. However, this contradicts~\ref{item:fnsecondderivative}.
\end{proof}

We need one last fact before we can give the proof of Lemma~\ref{lem:continuity}.
\begin{claim}\label{claim:derivativebigbeforefixedpoint}
If $d>d^*$, then there exists $\delta>0$ such that for all sufficiently large $n$ and all $0< x\leq\invintvarsurvlim$, we have
\begin{equation}\label{eq:fnderivativeestimate}
f'_n(x)>\delta.
\end{equation} 
\end{claim}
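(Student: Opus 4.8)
The plan is to exploit the strict concavity of $f_n$ on $(0,1]$. By property~\ref{item:fnsecondderivative} we have $f_n''<0$ throughout $(0,1]$ (for $n$ sufficiently large, including $n=\infty$), so $f_n'$ is strictly decreasing on $(0,1]$. Since $d>d^*$, the smallest root of~\eqref{eq:definitionoff} satisfies $0<\invintvarsurvlim<1$ by Claim~\ref{claim:behaviourfixedpointsolreformulated}; hence for every $x$ with $0<x\le\invintvarsurvlim$ we have $f_n'(x)\ge f_n'(\invintvarsurvlim)$. It therefore suffices to produce a constant $\delta>0$ with $f_n'(\invintvarsurvlim)>\delta$ for all sufficiently large $n$.

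The heart of the argument is the inequality $f'(\invintvarsurvlim)>0$ for the limiting function $f=f_\infty$, which I would prove by contradiction. Suppose $f'(\invintvarsurvlim)\le 0$. Since $f'$ is strictly decreasing on $(0,1]$, this forces $f'(x)<f'(\invintvarsurvlim)\le 0$ for all $x\in(\invintvarsurvlim,1]$, so $f$ is strictly decreasing on the nondegenerate interval $[\invintvarsurvlim,1]$ (using $\invintvarsurvlim<1$). Consequently $f(\invintvarsurvlim)>f(1)=0$ by property~\ref{item:fnatone}, which contradicts $f(\invintvarsurvlim)=0$ (recall $\invintvarsurvlim$ is a root of~\eqref{eq:definitionoff}). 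Hence $f'(\invintvarsurvlim)>0$, and I would set $\delta\coloneqq f'(\invintvarsurvlim)/2>0$.

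It then remains to transfer this bound from $f$ to $f_n$, which is a routine continuity argument. Using the explicit formula
\[
f_n'(\invintvarsurvlim)=\frac{1}{\invintvarsurvlim}-d\Bigl(1+\tfrac1\omega\Bigr)(r-1)\invintvarsurvlim^{r-2}
\]
together with $1/\omega\to 0$, we see that $f_n'(\invintvarsurvlim)\to \frac{1}{\invintvarsurvlim}-d(r-1)\invintvarsurvlim^{r-2}=f'(\invintvarsurvlim)=2\delta$ as $n\to\infty$ (and equals $2\delta$ for $n=\infty$). Thus $f_n'(\invintvarsurvlim)>\delta$ for all sufficiently large $n$, and combining this with the reduction in the first paragraph yields $f_n'(x)>\delta$ for all $0<x\le\invintvarsurvlim$, as claimed.

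I do not expect a genuine obstacle here: the whole statement is an elementary consequence of the strict concavity recorded in property~\ref{item:fnsecondderivative}. The one point deserving care is that the claim is phrased in terms of the root $\invintvarsurvlim$ of the \emph{limiting} equation rather than the root of $f_n=0$; the argument above sidesteps any comparison of the two roots by bounding $f_n'$ directly at $\invintvarsurvlim$ and using that $f_n'$ is decreasing on the whole of $(0,1]$. One should also note in passing that $\invintvarsurvlim>0$, which is immediate from property~\ref{item:fntominusinfinity} (as $f_n(x)\to-\infty$ when $x\to 0$, the value $0$ cannot be a root).
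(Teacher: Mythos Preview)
Your proposal is correct and follows essentially the same approach as the paper: both use strict concavity \ref{item:fnsecondderivative} to reduce to bounding $f_n'(\invintvarsurvlim)$, establish $f'(\invintvarsurvlim)>0$ via the facts $\invintvarsurvlim<1$, $f(1)=0$ and concavity, and then pass from $f$ to $f_n$ by the convergence $f_n'(\invintvarsurvlim)\to f'(\invintvarsurvlim)$. Your version spells out the argument for $f'(\invintvarsurvlim)>0$ in more detail (as a contradiction) where the paper leaves it as a one-line observation, but the underlying reasoning is identical.
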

\begin{proof}
Observe that $f'(\invintvarsurvlim) >0$,
which follows from the fact that $\invintvarsurvlim <1$, the fact that $f(1)=0$ and~\ref{item:fnsecondderivative}.
Now $f_n'(\invintvarsurvlim) \xrightarrow{n\to \infty} f'(\invintvarsurvlim)$,
and therefore there
exists $\delta>0$ and $N\in\NN$ such that for all $n\geq N$ we have $f'_n(\invintvarsurvlim)>\delta$.
Together with~\ref{item:fnsecondderivative}, the claim follows since $f'_n(x)>f'_n(\invintvarsurvlim) >\delta$
holds for all $0<x<\invintvarsurvlim$ and $n \ge N$. 
\end{proof}

With these considerations we are able to give the proof of Lemma~\ref{lem:continuity}.

\begin{proof}[Proof of Lemma~\ref{lem:continuity}]
Let $Q_n$ denote the set of zeros of $f_n(x)$ and let $q_n\coloneqq\invintvarsurvlim\left(r,d\left(1+\frac{1}{\omega}\right)\right)$. Observe that $q_n=\min_n Q_n$ and let $q_*\coloneqq\lim\limits_{n\to\infty}q_n$. Our goal is to show that
\[
q_*=\invintvarsurvlim(r,d)=:\invintvarsurvlim,
\]
or in other words, 
\begin{equation}\label{eq:qstarisrhostar}
\intvarsurvlim\left(r,d\left(1+\frac{1}{\omega}\right)\right)=\intvarsurvlim+o(1).
\end{equation} 
For all $n\in\NN$ we have $q_n\leq 1$ since $x=1$ is a solution to $f_n(x)=0$, and $q_n\geq 0$ by~\ref{item:fntominusinfinity}.
By Claim~\ref{claim:behaviourfixedpointsolreformulated} there exists some $N\in\NN$ such that for all $n\geq N$ the function $f_n$ has exactly one solution $\nu_n$, and since $f_n(\invintvarsurvlim)>0$ we have that $\nu_n<\invintvarsurvlim$. By the mean value theorem there exists $y_n$ with $\nu_n<y_n<\invintvarsurvlim$ such that 
\begin{equation*}
  f'_n(y_n)=\frac{f_n(\invintvarsurvlim)-f_n(\nu_n)}{\invintvarsurvlim-\nu_n}.
\end{equation*}
By~\eqref{eq:fnderivativeestimate} we deduce that
\[
\invintvarsurvlim-\nu_n=\frac{f_n(\invintvarsurvlim)-f_n(\nu_n)}{ f'_n(y_n)}\numleq{\eqref{eq:fnderivativeestimate}} \frac{f_n(\invintvarsurvlim)}{\delta}
\]
and hence  $\nu_n\xrightarrow{n\to\infty}\invintvarsurvlim$ (since $f_n(\invintvarsurvlim)\to 0$).

We have now proved that~\eqref{eq:qstarisrhostar} holds, and~\eqref{eq:relatingtherhos}
shows that then we also have
\[
\intfactsurvlim\left(r,d\left(1+\frac{1}{\omega}\right)\right)=\intfactsurvlim(r,d)+o(1).
\]
Furthermore $\oldeta$ and $\gamma$ as defined in~\eqref{eq:definitionofeta} and~\eqref{eq:definitionofgamma} are continuous functions in $\intvarsurvlim$ and $\intfactsurvlim$, respectively, both of which are themselves continuous functions in $d$ and the statement of Lemma~\ref{lem:continuity} follows.
\end{proof}

\section{Probabilistic Lemmas}\label{app:problemmas}

In this appendix we include various proofs of probabilistic results which were omitted from the main text.
These proofs are all standard applications of common techniques.

\subsection{Number of factor nodes}\label{app:numberfactornodes}

\begin{proof}[Proof of Proposition~\ref{prop:numberfactornodes}]
Let us set $a\coloneqq\binom{n}{r}p - \frac{dn}{r}$, and observe that
$$
a = (1+o(1))d \frac{\binom{n}{r}}{\binom{n}{r-1}} - \frac{dn}{r} = o(1)\frac{dn}{r} = o(n).
$$
We will prove the proposition for any $\omega_0$ that satisfies $\omega_0 = o(n/a)$
and $\omega_0 = o(n^{1/4})$, so let us fix an $\omega_0 \to \infty$ that satisfies these properties.

The number of factor nodes in $\randfact$ is simply the number of edges in $H^r(n,p)$, which is distributed as
$X\sim\Bi\big(\binom{n}{r},p\big)$. This in turn has expectation
\begin{equation}\label{eq:expectationedges}
    \binom{n}{r}p = (1+o(1))\frac{n^r}{r!} \cdot \frac{d(r-1)!}{n^{r-1}} = (1+o(1))\frac{dn}{r}.
\end{equation}
By Lemma~\ref{lem:chernoffbounds} with $N=\binom{n}{r}$ and $s=n^{3/4}$ we have
\[\prob\left(|X-\expec(X)|\geq s\right)\leq 2\cdot\exp\left(-\frac{n^{1/2}r}{3d}\right)=o(1),\]
since 
\[2\left(Np+\frac{s}{3}\right)\numeq{\eqref{eq:expectationedges}}(1+o(1))\frac{2dn}{r}\leq\frac{3dn}{r}.\]
Hence, \whp\ the number of edges $m$ satisfies
$$
\left|m-\frac{dn}{r}\right| \le \left|m-\binom{n}{r}p\right|  + \left|\binom{n}{r}p - \frac{dn}{r}\right|
\le n^{3/4} + \left|a\right| = o\left(\frac{n}{\omega_0}\right)
\le \frac{1}{\omega_0}\cdot \frac{dn}{r},
$$
as required.
\end{proof}

\subsection{Sprinkling}\label{app:sprinkling}

\begin{proof}[Proof of Lemma~\ref{lem:standardsprinkling}]
First let $\omega'$ be a function which tends to infinity arbitrarily slowly, and in particular such that $\frac{p_1 n^r}{(\omega')^r\omega} \to \infty$.
Observe that we may assume that $L_P^{(1)} \ge n/\omega'$, since otherwise the trivial bound $L_C^{(2)}\ge 0$
is sufficient.

Since $H_1,H_2$ are coupled such that $H_1\subset H_2$, we have $H_2 \sim H_1 \cup H_0$ for
a random hypergraph $H_0 \sim H^r(n,p_0)$ which is independent of $H_1$ and where $p_0 = \frac{p_2-p_1}{1-p_1} \ge p_1/\omega$.

Let $P_0'$ be a longest loose path in $H_1$.
Let $V_1$ be the set of the first $\delta n /(4\omega')$ vertices of $P_0'$. Recall that we omit floors and ceilings,
and in particular we assume that $\delta n/(4(r-1)\omega')\in\NN$. Let $I_1$ be defined similarly for the following $\delta n/(4\omega')$ vertices
and $I_2$ for the last $\delta n/(4\omega')$ vertices of $P_0'$. Let $P_0$ be the loose path which results after deleting
$V_1$, $I_1$, $I_2$ and all incident edges.
Denote by $V_0$ the set of vertices contained in $P_0$.

Now let $\mathcal{A}$ be the set of $r$-sets such that one vertex lies in $I_1$, one vertex lies in $I_2$ and $r-2$ vertices lie in $V_1$.
If some $x\in\mathcal{A}$ forms an edge in $H_0$, then we would obtain a loose cycle in $H_2$
containing $P_0$ and thus of length at least
$L_P - 3n/(4(r-1)\omega') = L_P-o(n)$, as required.
We have
\[
|\mathcal{A}|=|I_1||I_2|\binom{|V_1|}{r-2}
=\left(\frac{n}{\omega'}\right)^2\binom{n/(4\omega')}{r-2}
=\Theta\left(\left(\frac{n}{\omega'}\right)^r\right).
\]
Therefore the probability that no $x\in\mathcal{A}$ is an edge in $H_0\sim H^r(n,p_0)$ is at most 
\begin{align*}
\left(1-p_0\right)^{\Theta((n/\omega')^r)} &\leq \exp\left(-p_0\Theta((n/\omega')^r)\right) 
\leq\exp\left(-\frac{p_1}{\omega}\Theta\left(\frac{n^r}{(\omega')^r}\right)\right) =o(1),
\end{align*}
and therefore with high probability there is at least one such edge, as required.
\end{proof}

\subsection{Offspring distribution}\label{app:offspringdist}

\begin{proof}[Proof of Proposition~\ref{prop:distributionofnochildren}]
We will consider an auxiliary breadth-first search algorithm starting from $w$, which goes through all
$\binom{n-1}{r-1}$ many $r$-sets of variable nodes containing the currently active node $x$
and queries an $r$-set $R$ to determine whether it is an edge of $H^r(n,p)$ (i.e.\ if there exists a factor node
whose neighbourhood is this $r$-set) if $x$ is the only node of $R$ which lies in the current tree,
and otherwise the query is skipped.
Initially the current tree consists simply of $w$, and the tree is updated if a query turns out to be an edge---in this case
the corresponding factor node and all its neighbours are added to the current tree and we proceed.
The event $\eone$ implies that $|D_{\leq\ell+1}(w)|\leq(\log n)^2$ and thus the number $q_x$ of queries made from any node $x$ satisfies
$\binom{n-(\log n)^2}{r-1}\leq q_x \leq \binom{n}{r-1}$.
The number of edges found is asymptotically distributed as $\Bi\left(q_x,p\right)$---this is not exact
since we still have the conditioning on the event $\eone$, but since by Lemma~\ref{lem:eventE} this is a very likely event,
it does not affect the distribution significantly.
Now we can couple the number $Z_x$ of edges discovered from $x$ from below and above by
random variables with distributions $\Bi\left(\binom{n-(\log n)^2}{r-1},p\right)$ and $\Bi\left(\binom{n}{r-1},p\right)$, respectively.
Since each of these random variables tends asymptotically to the $\Po(\binom{n}{r-1}p)$ distribution,
which in turn converges to the $\Po(d)$ distribution, so does $Z_x$.

Finally, to prove independence observe that the upper and lower couplings in the previous step
were independent of the number of edges found from any previous vertex (the lower coupling
only used the conditioning on $\eone$). Furthermore no $r$-set is queried more than once, and therefore
the upper and lower couplings can be considered independent for each vertex.
Thus the $Z_x$ are also asymptotically independent of each other.
\end{proof}

\subsection{Maximum degree}\label{app:maxdeg}
We aim to prove Claim~\ref{claim:maxdeg}, a statement about the maximum degree of
$G^r(n,p) = G(H^r(n,p))$. We first prove the analogous statement for $H^r(n,p)$.
For any $r$-uniform hypergraph $H$, let $\Delta(H)\coloneqq\max\limits_{v\in V(H)}d_H(v)$.
\begin{lemma}\label{lem:maxdegree}
Let $r\in\NN_{\geq 2}, \ d\in\mathbb{R}^+\backslash\{d^*\}$ and $p=\frac{d_n}{\binom{n-1}{r-1}}, \ d_n\to d$.
Then, for sufficiently large $n$, with probability at least $1-n^{-1/3}$ we have
\[
\Delta(\cH)\leq \log n.
\]
 \end{lemma}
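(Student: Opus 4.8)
The plan is to prove the statement by a straightforward first-moment (union bound) argument on the degree of a single vertex, which is a $\Bi\left(\binom{n-1}{r-1},p\right)$ random variable with expectation $d_n \to d$, hence bounded by a constant. First I would fix a vertex $v$ and note that $d_H(v) \sim \Bi(N,p)$ where $N = \binom{n-1}{r-1}$ and $Np = d_n = \Theta(1)$. The key observation is that $\log n \to \infty$ while the expected degree stays constant, so we are bounding the probability of a large deviation at a threshold that grows without bound. The cleanest tool here is the standard tail bound for the binomial in the regime where the deviation $s = \log n - Np$ greatly exceeds the mean; one can use Lemma~\ref{lem:chernoffbounds} with $s = \log n - Np$, which gives
\[
\Pr\left(d_H(v) \ge \log n\right) \le 2\exp\left(-\frac{(\log n - Np)^2}{2(Np + (\log n - Np)/3)}\right) \le 2\exp\left(-\Omega\left(\log n\right)\right) = n^{-\omega(1)}.
\]
Actually, since the exponent is $\Theta((\log n)^2/\log n) = \Theta(\log n)$, this already gives a bound of the form $n^{-C}$ for any constant $C$ once $n$ is large. (Alternatively one can invoke the even simpler bound $\Pr(\Bi(N,p) \ge k) \le \binom{N}{k}p^k \le (eNp/k)^k = (O(1)/\log n)^{\log n}$, which is super-polynomially small.)

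Second, I would take a union bound over all $n$ vertices:
\[
\Pr\left(\Delta(\cH) > \log n\right) \le n \cdot \Pr\left(d_H(v) > \log n\right) \le n \cdot 2\exp\left(-\Omega(\log n)\right),
\]
and since the per-vertex probability decays faster than any polynomial in $n$ (or at least faster than $n^{-2}$, which suffices), the right-hand side is $o(n^{-1/3})$ for $n$ large. Hence with probability at least $1 - n^{-1/3}$ we have $\Delta(\cH) \le \log n$, as claimed. The restriction $d \neq d^*$ plays no role in this particular lemma — it is presumably carried along only because the ambient setup in Section~\ref{def:basicdefinitions} fixes that convention — so I would simply not use it.

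I do not expect any real obstacle here: the entire argument is a routine Chernoff-plus-union-bound estimate, and the only mild care needed is to make the constants in the exponent explicit enough to beat $n^{-1/3}$ after multiplying by $n$, which is trivially true since the per-vertex failure probability is $n^{-\omega(1)}$. The one thing worth stating cleanly is which form of the Chernoff/binomial tail one invokes and why the growing threshold $\log n$ makes the bound so strong; beyond that it is purely mechanical. Finally, the passage from $\Delta(\cH) \le \log n$ to the corresponding statement for the factor graph $G^r(n,p)$ (Claim~\ref{claim:maxdeg}) is immediate, since variable nodes of $G^r(n,p)$ have the same degrees as the corresponding vertices of $\cH$ and factor nodes all have degree exactly $r \le \log n$; but that deduction belongs to the proof of Claim~\ref{claim:maxdeg} rather than to this lemma.
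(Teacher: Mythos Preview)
Your approach is exactly the paper's: apply the Chernoff bound (Lemma~\ref{lem:chernoffbounds}) to a single vertex degree $\Bi\big(\binom{n-1}{r-1},p\big)$ with mean $d_n=\Theta(1)$, then take a union bound over the $n$ vertices. One small slip: $\exp(-\Omega(\log n))$ is only $n^{-\Omega(1)}$, not $n^{-\omega(1)}$; the Chernoff exponent here is $(3/2+o(1))\log n$, giving a per-vertex bound of order $n^{-3/2}$, which after the union bound yields $n^{-1/2+o(1)}\le n^{-1/3}$ --- still sufficient, and your alternative $(eNp/k)^k$ estimate does give the super-polynomial decay you claimed.
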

 
\begin{proof}
By Lemma~\ref{lem:chernoffbounds} and taking the union bound we have
\begin{align*}
\mathbb{P}(\Delta(\cH)\geq \log n) &\leq n\cdot\mathbb{P}\left(\mathrm{Bin}\left(\binom{n-1}{r-1},p\right)>\log n\right) \\
&\leq 2n\cdot\exp\left(-\frac{\left(\log n-d_n\right)^2}{2\left(d_n+\frac{\log n}{3}\right)}\right) \\
&\leq 2\cdot\exp\left(\left(-\frac{3}{2}+o(1)\right)\log n+\log n\right)\\
&\le n^{-1/3}.\qedhere
\end{align*}
\end{proof}

\begin{proof}[Proof of Claim~\ref{claim:maxdeg}]
The statement follows immediately from the observation that 
\[\Delta(G^r(n,p))=\max\{\Delta(H^r(n,p)), \, r\}\]
and the fact that $r \le \log n$ for sufficiently large $n$.
\end{proof}

\subsection{Proof of Lemma~\ref{claim:terminationprob}}\label{sec:proofofterminationprob}
\begin{proof}
Defining $\varprop{j}(G')$ to be the proportion of variable nodes of degree $j$ in $G'$ for each $j\in \NN$,
observe that when revealing the neighbour of a factor node, the probability of revealing a variable node of degree at least three is
\begin{align*}
\frac{\sum_{j\ge 3}j\varprop{j}(G')}{\sum_{j \in \NN}j\varprop{j}(G')}
& \numgeq{\text{\phantom{L.\ref{lem:mainlemma1}}}} \frac{3\varprop{3}(G')}{\sum_{j =1}^3 j\varprop{j}(G')} \\
& \numgeq{\text{\phantom{L.\ref{lem:mainlemma1}}}} \frac{3(\varpropl{3}{\ell} - 2\eps_2)}{\sum_{j=1}^3 j(\varpropl{j}{\ell}+2\eps_2)} \\
& \numgeq{\text{L.\ref{lem:mainlemma1}}} \frac{3\Pr\left(\widetilde\Po(d\intfactsurvlim)=3\right)-3\eps-6\eps_2}{ \sum_{j=1}^3 j\Pr\left(\widetilde\Po(d\intfactsurvlim)=j\right)+6\eps+12\eps_2}\\
& \numgeq{\text{\phantom{L.\ref{lem:mainlemma1}}}} \frac{(d\intfactsurvlim)^3 \exp(-d\intfactsurvlim)/2}{\EE\left(\widetilde \Po(d\intfactsurvlim)\right)} - 9\eps-18\eps_2\\
& \numgeq{\text{\phantom{L.\ref{lem:mainlemma1}}}} \frac{(d\intfactsurvlim)^2 \exp(-d\intfactsurvlim)}{2} - 20\eps_2,
\end{align*}
where in the last line we used that $\eps_2=\sqrt{\eps}$.
Similarly, the probability of revealing a factor node of degree at least three is at least
\[
\frac{3\Pr\left(\widetilde\Bi(r,\intvarsurvlim)=3\right)-3\eps-6\eps_2}{ \sum_{j=1}^3 j\Pr\left(\widetilde\Bi(r,\intvarsurvlim)=j\right)+6\eps+12\eps_2}\geq \frac{(r-1)(r-2)\intvarsurvlim^2 (1-\intvarsurvlim)^{r-3}}{2} - 20\eps_2,
\]
which proves our claim.
\end{proof}

\section{Proof of Lemma~\ref{lem:eventE}}\label{app:eventE}

Recall that for a node $w\in\cV\cup\cF$ the event $\eone$ holds if and only if $|D_{\leq \ell+1}(w)|\leq(\log n)^2$
and furthermore $D_{\leq \ell+1}(w)$ contains no node which lies in a cycle of length at most~$2\ell$.
We will prove Lemma~\ref{lem:eventE} under the assumption that $w \in \cV$.
The case when $w \in \cF$ follows since clearly $E_w(\ell)$ is implied by
$E_{v_1}(\ell) \cap \ldots \cap E_{v_r}(\ell)$, where $v_1,\ldots,v_r$ are the neighbours of $w$.
Furthermore, applying the statement of the lemma to the variable nodes $v_1,\ldots,v_r$ we have
\begin{align*}
\Pr(E_{v_1}(\ell) \cap \ldots \cap E_{v_r}(\ell)) & \ge 1- \sum_{i=1}^r (1-\Pr(E_{v_i}(\ell)))\\
& \ge 1- r\exp\left(-\Theta\left(\sqrt{\log n}\right)\right)\\
& = 1-\exp\left(-\Theta\left(\sqrt{\log n}\right)\right),
\end{align*}
as required. We therefore assume in the following that $w \in \cV$.

We begin a breadth-first search from $w$, and index the generations by a \emph{time} $t$.
We run this process until a stopping time $\Tstop$ which is defined as follows.
\begin{definition}
Let $w\in\cV$ be given and $\ell=\ell(n)=o(\log\log n).$ Let $\Tstop$ be defined as the smallest time $t$ such that one of the following stopping conditions is invoked:
\begin{enumerate}[label=\textnormal{\textbf{(S\arabic*)}}]
	    \item\label{stopsize} The BFS tree has size $(\log n)^2$;
	    \item\label{stopcycle} The BFS tree contains a node which lies in a cycle of length at most $2\ell$;
	    \item\label{stoptime} $t=\ell+1$.
\end{enumerate}
\end{definition}
We aim to show that \whp~\ref{stoptime} is applied first. 

\begin{proposition}\label{prop:stopsize}
With probability at least $1-\exp(-\Theta(\sqrt{\log n}))$, \ref{stopsize} is not applied.
\end{proposition}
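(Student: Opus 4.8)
The plan is to reduce the claim to a size bound on the ball $D_{\leq\ell+1}(w)$ in $G^r(n,p)$ and then control that size by an exponential‑moment (branching‑process domination) argument. Since the breadth‑first search has explored exactly $D_{\leq t}(w)$ after round $t$ and runs only until round $\ell+1$ (by \ref{stoptime}), the event $\{|D_{\leq\ell+1}(w)|<(\log n)^2\}$ is contained in the event that \ref{stopsize} is never applied, so it suffices to prove
\[
\Pr\big(|D_{\leq\ell+1}(w)|\ge(\log n)^2\big)\le\exp\big(-\Theta(\sqrt{\log n})\big).
\]
I would set $S_t:=|D_{\leq t}(w)|$, so $S_0=1$, and describe the exploration step by step. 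When a variable node at distance $t-1$ is explored it acquires at most $\Bi\big(\binom{n-1}{r-1},p\big)$ new factor‑node neighbours at distance $t$ (each of the $\binom{n-1}{r-1}$ many $r$‑sets through it is an edge of $H^r(n,p)$ independently with probability $p$), and a factor node at distance $t-1$ acquires at most $r-1$ new variable‑node neighbours. Both are dominated by $\eta:=(r-1)+\Bi\big(\binom{n-1}{r-1},p\big)$, which has $\EE\eta\le(r-1)+2d=:C$ for large $n$ and a finite moment generating function. Because each $r$‑set is tested for being an edge at most once during the exploration, these contributions are, conditionally on the history $\mathcal{H}_{t-1}$ revealed up to distance $t-1$, mutually independent and independent of $\mathcal{H}_{t-1}$; as there are at most $S_{t-1}$ nodes at distance $t-1$, this gives the stochastic domination $S_t\preceq\sum_{i=1}^{S_{t-1}}\zeta_i$ with $\zeta_i:=1+\eta_i$ conditionally i.i.d.\ given $\mathcal{H}_{t-1}$.

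Next I would run an exponential‑moment recursion. Let $\phi(\theta):=\log\EE e^{\theta\zeta}$, which is finite for $\theta$ near $0$; since $\zeta\ge1$ and $\EE\zeta\le 1+C$, there are constants $\lambda_0=\lambda_0(r,d)>0$ and $C'=C'(r,d)>1$ with $0\le\phi(\theta)\le C'\theta$ for all $0\le\theta\le\lambda_0$. Conditioning on $\mathcal{H}_{t-1}$ and using the conditional independence of the $\zeta_i$ gives $\EE[e^{\lambda S_t}\mid\mathcal{H}_{t-1}]\le e^{\lambda S_{t-1}}(\EE e^{\lambda\eta})^{S_{t-1}}=e^{\phi(\lambda)S_{t-1}}$, hence $\EE[e^{\lambda S_t}]\le\EE[e^{\phi(\lambda)S_{t-1}}]$. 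Iterating this $\ell+1$ times from $S_0=1$ yields $\EE[e^{\lambda S_{\ell+1}}]\le\exp\big(\phi^{(\ell+1)}(\lambda)\big)$, where $\phi^{(\ell+1)}$ denotes the $(\ell+1)$‑fold iterate. Choosing $\lambda:=\tfrac12\lambda_0(C')^{-(\ell+1)}$ keeps $(C')^j\lambda\le\tfrac12\lambda_0\le\lambda_0$ for every $j\le\ell+1$, so the estimate $\phi(\theta)\le C'\theta$ remains valid throughout the iteration and $\phi^{(\ell+1)}(\lambda)\le(C')^{\ell+1}\lambda=\tfrac12\lambda_0$; thus $\EE[e^{\lambda S_{\ell+1}}]\le e^{\lambda_0}=O(1)$.

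Finally, Markov's inequality gives
\[
\Pr\big(|D_{\leq\ell+1}(w)|\ge(\log n)^2\big)\le e^{\lambda_0}\exp\Big(-\tfrac12\lambda_0(C')^{-(\ell+1)}(\log n)^2\Big),
\]
and since $\ell=o(\log\log n)$ we have $(C')^{\ell+1}=\exp\big(O(\ell)\big)=(\log n)^{o(1)}$, so the exponent is $-(\log n)^{2-o(1)}\le-\sqrt{\log n}$ for $n$ large, which proves the displayed bound and hence the proposition (the case $w\in\cF$ being identical, or reducible to a neighbour of $w$). The step that will need the most care is the domination $S_t\preceq\sum_{i=1}^{S_{t-1}}\zeta_i$ with the $\zeta_i$ conditionally independent of the past: this is the standard deferred‑decisions description of the exploration of $H^r(n,p)$, relying on the fact that distinct exploration steps probe disjoint families of $r$‑sets, so the binomial offspring counts are genuinely fresh. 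Everything else — the moment generating function bound on $\zeta$, the monotone iteration of $\phi$, and the final Markov estimate — is routine.
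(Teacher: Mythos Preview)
Your argument is correct and in fact yields a stronger tail bound, of order $\exp\big(-(\log n)^{2-o(1)}\big)$, than the $\exp\big(-\Theta(\sqrt{\log n})\big)$ claimed in the proposition. The route, however, is genuinely different from the paper's. The paper argues by splitting at the first generation $t_0$ in which the BFS reaches size $\sqrt{\log n}$: before $t_0$ the total size is trivially at most $(\ell+2)\sqrt{\log n}$, and at $t_0$ it invokes the maximum-degree lemma (Lemma~\ref{lem:maxdegree}) to cap the overshoot at $(\log n)^{3/2}$, after which it controls each subsequent generation by a separate Chernoff bound and sums a geometric series. Your approach avoids both the case split and the appeal to the maximum-degree lemma by running a single exponential-moment recursion $\EE[e^{\lambda S_t}]\le\EE[e^{\phi(\lambda)S_{t-1}}]$ through all $\ell+1$ levels, tuning $\lambda$ so that the iterates stay in the linear regime of $\phi$. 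This is more self-contained and gives the sharper bound; the paper's proof is slightly more elementary in that it only uses Chernoff, but at the cost of the extra external lemma and a coarser final estimate. Your identification of the delicate point---that distinct exploration steps probe disjoint (or freshly re-randomised) families of $r$-sets, so the binomial offspring counts can be taken conditionally independent---is exactly right and is the only place where care is needed.
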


\begin{proof}
We consider a branching process starting at one node and in which each variable node
has offspring distribution $\Bi\left(\binom{n-1}{r-1},p\right)$ independently,
and each factor node has $r-1$ offspring independently---this is an upper coupling on the BFS process started at~$w$.
(Note in particular that each factor node has precisely $r-1$ children since the parent
has already been counted and the root, which is the only node which has no parent, is a variable node.)

Let us consider the first generation $t_0$ in which the number of nodes is at least $\sqrt{\log n}$.
If no such $t_0$ exists, or if $t_0 \ge \ell+1$, then the total size of the first $(\ell+2)$ generations
is at most $(\ell+2)\sqrt{\log n} \le (\log n)^2$ as required.
On the other hand, if $t_0\le \ell+1$, let us set $x_t$ to be the size of the $t$-th generation, for each $t \in \NN$.
We will assume for technical convenience that $x_t \ge \sqrt{\log n}$ for all $t\ge t_0$, i.e.\ the size
of a generation does not decrease back below $\sqrt{\log n}$---if this does occur, we simply add in some
fictitious nodes to reach this size threshold, which is clearly permissible for an upper bound.

By Lemma~\ref{lem:maxdegree}, with probability at least $1-n^{-1/3}$ we have
$x_{t_0}\le (\log n)^{3/2}$.
Furthermore, by a Chernoff bound, for each even $t \ge t_0$ (meaning that the $t$-th generation
consists of variable nodes), the probability that
$x_{t+1} \ge 2\binom{n-1}{r-1}px_t$ is at most $\exp\left(-\Theta(\sqrt{\log n})\right)$.
Taking a union bound over all at most $\lceil\frac{\ell+3-t_0}{2}\rceil \le \ell+1$ even generations,
and recalling that each variable node has $r-1$ children,
we deduce that \whp\ the total size of the process up to generation $\ell+1$ is at most
\begin{align*}
t_0\sqrt{\log n} + (1+(r-1))(\log n)^{3/2}\sum_{i=0}^{\lceil\frac{\ell+3-t_0}{2}\rceil} \left(2\binom{n-1}{r-1} p\right)^{i}
& \le \log n + r(\log n)^{3/2} \sum_{i=0}^{\ell+1} \left(3d\right)^i\\
& = (\log n)^{3/2}\Theta\left((3d)^{\ell+1}\right) \\
& \le (\log n)^2,
\end{align*}
where the last line follows since $\ell = o(\log \log n)$.
In total, the error probability is at most $n^{-1/3} + \exp(-\Theta(\sqrt{\log n})) = \exp(-\Theta(\sqrt{\log n}))$.
\end{proof}

\begin{proposition}\label{prop:stopcycle}
With probability at least $1-2n^{-1/3}$, \ref{stopcycle} is not applied.
\end{proposition}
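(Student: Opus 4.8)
The claim is a routine first-moment estimate, and the plan is as follows. Since the breadth-first search is run for at most $\ell+1$ generations (stopping condition \ref{stoptime}), every node ever added to the BFS tree lies at distance at most $\ell+1$ from $w$ in $G = G^r(n,p)$. Hence, if \ref{stopcycle} is applied, there is a node $v$ with $d_G(w,v) \le \ell+1$ lying on some cycle $C$ of length $2k \le 2\ell$ (recall that cycles in a factor graph have even length at least $4$, since a $2$-cycle would be a double edge, which the factor graph of a simple hypergraph cannot contain). Taking a geodesic from $w$ to $v$ and truncating it at the first node it shares with $C$ produces a path $P$ of length $j \le \ell+1$ internally disjoint from $C$; in particular $P\cup C$ is a ``lollipop'', i.e.\ a connected unicyclic subgraph of $G$ on pairwise distinct nodes, one of which (an endpoint of $P$) is the fixed node $w$. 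So it suffices to prove that the expected number $\EE[Y]$ of such lollipops in $G^r(n,p)$ satisfies $\EE[Y] \le n^{-1/3}$ for large $n$; then $\Pr(\text{\ref{stopcycle} applied}) \le \Pr(Y \ge 1) \le \EE[Y] \le n^{-1/3} < 2n^{-1/3}$.

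First I would compute the contribution $\EE[Y_{j,k}]$ of lollipops with path length $j$ and cycle length $2k$. Such a structure has $a$ variable nodes (including $w$) and $f$ factor nodes, with $a+f$ nodes and $a+f$ edges; an elementary count shows $a=f$ if $j$ is even, while if $j$ is odd then $a=f+1$ and the node shared by $P$ and $C$ is a factor node incident to three distinct variable nodes of the structure. To count embedded copies, choose the $a-1$ variable nodes other than $w$ (at most $n^{a-1}$ ways), and for each factor node incident to $s\ge 2$ of the chosen variable nodes note that the expected number of hyperedges of $H^r(n,p)$ realising that incidence pattern is $\binom{n-s}{r-s}p$, which is $\Theta(n^{-1})$ for $s=2$ and $O(n^{-2})$ for $s\ge 3$ (here we use $r\ge 3$). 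Since distinct hyperedges are independent, multiplying these estimates over the $f$ factor nodes and using either $a=f$ (for $j$ even) or the degree-$3$ shared node (for $j$ odd) yields $\EE[Y_{j,k}] \le C^{\,j+k}/n$ for a constant $C=C(r,d)$.

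It then remains to sum over $0\le j\le \ell+1$ and $2\le k\le \ell$, which gives $\EE[Y]\le C^{O(\ell)}/n$; since $\ell = o(\log\log n)$ we have $C^{O(\ell)} = e^{o(\log\log n)} = n^{o(1)}$, so $\EE[Y] = n^{-1+o(1)} \le n^{-1/3}$ for $n$ large, completing the argument. I do not expect a genuine obstacle here; the only point requiring care is the bookkeeping that forces the power of $n$ down to $-1$, i.e.\ verifying that every relevant witness is unicyclic with $w$ on it and that, in the odd-$j$ case, the node where the path meets the cycle really is a factor node of degree three, which is exactly what compensates for the ``missing'' factor of $n$ from not fixing a variable node at that end of the path. (The factor $2$ in the stated bound leaves comfortable slack, so one need not optimise constants; it could also be absorbed by first conditioning on the max-degree event of Lemma~\ref{lem:maxdegree} if one prefers the conditioned formulation.)
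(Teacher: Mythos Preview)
Your argument is correct. Bounding directly the expected number of ``lollipop'' subgraphs rooted at the fixed node $w$ (a short cycle together with a path from $w$ to it) is a clean first-moment computation, and the parity bookkeeping you describe---in particular that for odd $j$ the junction is a factor node of structural degree $3$, contributing a factor $\binom{n-3}{r-3}p=O(n^{-2})$ that exactly compensates for $a=f+1$---is precisely what forces the exponent of $n$ down to $-1$ in both cases.

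The paper takes a different two-step route: it first bounds the expected \emph{global} number of variable nodes lying on a cycle of length at most $2\ell$ (obtaining $o(\log n)$), applies Markov's inequality to get at most $n^{1/3}\log n$ such nodes with probability $\ge 1-n^{-1/3}$, and then, using that \ref{stopsize} has not been invoked so $|D_{\le\ell+1}(w)|\le(\log n)^2$, argues that the ball around $w$ is unlikely to contain any of them. Your approach is more self-contained: it does not need to appeal to \ref{stopsize}, and it avoids the somewhat informal symmetry/intersection step at the end of the paper's argument. Conversely, the paper's route sidesteps the case analysis on the parity of $j$ and the degree of the junction node. Both yield bounds of the form $n^{-1+o(1)}$, comfortably inside the stated $2n^{-1/3}$.
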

\begin{proof}
Let $X_{\leq 2\ell}$ be the random variable counting the number of variable nodes in $D_{\leq\ell+1}(w)$
which lie on cycles of length at most $2\ell$ in $\randfact$. Then
\[
\expec(X_{\leq 2\ell})\leq\sum_{i=2}^{2\ell} i \frac{n^i}{2i}\binom{n}{r-2}^ip^i \le \sum_{i=3}^{2\ell} (n^{r-1}p)^i=o(\log n),
\]
where the last line follows since $\ell = o(\log n)$ and $n^{r-1}p = O(1)$.
By Markov's inequality, with probability at least $1-n^{-1/3}$, at most $n^{1/3}\log n$ variable nodes lie on a cycle of length at most $2\ell$.
Assuming that this is indeed the case,
since \ref{stopsize} was not invoked we infer that the probability that there exists a node in $D_{\leq{\ell+1}}(w)$ that lies in a cycle of length at most $2\ell$ is at most $\frac{(\log n)^3 n^{1/3}}{n} \le n^{-1/3}$, and so with probability at least $1-2n^{-1/3}$ \ref{stopcycle} is not invoked first.
\end{proof}
\begin{proof}[Proof of Lemma~\ref{lem:eventE}]
By Propositions~\ref{prop:stopsize} and~\ref{prop:stopcycle}, the probability that \ref{stoptime} is invoked first
for some particular node $w$
is at least $1-\exp(-\Theta(\sqrt{\log n})) - 2n^{-1/3} \ge 1- \exp(-\Theta(\sqrt{\log n}))$.
In particular, since $\exp(-\Theta(\sqrt{\log n}))=o(1)$, by Markov's inequality \whp\ $(1-o(1))n$ nodes satisfy $\eone$.
\end{proof}

\section{Proof of Lemma~\ref{claim:uniformity}}\label{sec:proofofuniformity}

\begin{proof}
First observe that any permutation of $\cV$ does not affect the probability that
$G_\ell$ is equal to a factor graph $H$ with variable node set $\cV$,
and therefore we assume without loss of generality that $\phi \vert_\cV$ is the identity map.

We will further simplify the proof by simply identifying each $a \in \cF_1$ with
$\phi(a) \in \cF_2$. Note that this is an abuse of terminology: if it were in fact true
that $\phi(a)=a$, then the two factor nodes would represent exactly the same edge
in the original $r$-uniform hypergraph, and therefore have exactly the same neighbours.
However, although we identify the two factor nodes with one another, we do not
carry over these restrictions (indeed, otherwise we would necessarily have $H_1=H_2$).
An alternative way of considering this is to say that we regard the factor nodes
no longer as edges of a hypergraph but as abstract nodes stripped of all information.

Now for $i=1,2$, let $\cG_i$ be the set of factor graphs $G$ of $r$-uniform hypergraphs
such that $G_\ell = H_i$.
Observe that any $G \in \cG_i$ has precisely the same node set as $H_i$, and in particular
has $|\cF_i|$ factor nodes, and therefore the probability that $G^r(n,p)=G$ is simply
$p^{|\cF_i|}(1-p)^{\binom{n}{r}-|\cF_i|}$. Since this value is identical for $i=1,2$,
what remains to prove is simply that
$$
|\cG_1|=|\cG_2|.
$$

We now observe that, as follows from the definition of the peeling process,
if $G_\ell = H_i$ then any edge of $G$ which runs between nodes which are
non-isolated in $H_i$ is also in $H_i$. (In other words, the edge set of $H_i$
is induced by the set of non-isolated nodes.) Since the sets of isolated nodes
in $H_1,H_2$ are identical, let $\cW$ be this set, so for any $G \in \cG_i$,
the subgraph of $G$ with all nodes but only those edges of $G$ which lie within $\cW$ is precisely $G_\ell=H_i$.
Let us define a
graph function $f:\cG_1 \to \cG_2$ which, for each $G \in \cG_1$,
changes the edges within $\cW$ to those of $H_2$ instead of $H_1$, but otherwise
leaves all nodes and edges unchanged. We aim to prove that $f$
is a bijection from $\cG_1$ to $\cG_2$, which implies that these classes
have the same size, as required.

The critical part of the proof is to observe that the range of this function
does indeed lie within $\cG_2$, i.e.\ that for every $G \in \cG_1$
we have $f(G) \in \cG_2$.
To see this, observe that it is a simple exercise to prove
by induction on $t$ that $f(G_t)=(f(G))_t$
for all $G \in \hat \cG_1$ and all $t \in [\ell]_0$, and in particular
$H_2 = f(H_1) = f(G_\ell) = (f(G))_\ell$.

However, slightly more subtly, we have to observe that $f(G)$ is indeed
a factor graph arising from an $r$-uniform hypergraph. It is clear from the
construction that $f(G)$ respects the bipartition of variable and factor nodes,
and also that the degrees of all nodes are identical in $G$ and $f(G)$, so in particular
every factor node has degree $r$ in $f(G)$.
Note also that $G$ contained no double edges which means that $f(G)$ cannot
contain double edges which do not lie entirely within $\cW$,
while the fact that $H_2$ contains no double edges also means that
$f(G)$ contains no double edges which lie entirely within $\cW$.
It therefore remains to prove that $f(G)$ contains no $r$-duplicates.
Observe that in an $r$-duplicate already all
nodes have degree at least two, and therefore none of these nodes will ever
be disabled in the peeling process. Thus if $f(G)$ contains an $r$-duplicate,
it is also present in $f(G)_\ell = H_2$, which contradicts our assumption.

We now further observe that $f$
is clearly an injection, since any two distinct graphs $G,G' \in \cG_1$
must differ in edges which do not lie completely within $\cW$, and therefore
$f(G),f(G')$ also differ in those edges. Finally, the function $f$ has an obvious inverse,
and therefore is a bijection. It follows that $|\hat \cG_1| = |\hat \cG_2|$.
\end{proof}

\end{document}